\newtheorem{theorem}{Theorem}[section]
\newtheorem{corollary}[theorem]{Corollary}
\newtheorem{lemma}[theorem]{Lemma}
\newtheorem{proposition}[theorem]{Proposition}
\newtheorem{remark}[theorem]{Remark}
\newtheorem{definition}[theorem]{Definition}
\newtheorem{example}[theorem]{Example}
\newtheorem{claim}[theorem]{Claim}
\DeclareMathOperator*{\esssup}{ess\,sup}
\newcommand{\Addresses}{{
    \bigskip
    \footnotesize
    
    Monika Dörfler, \textsc{Department of Mathematics, University of Vienna, 1090 Vienna, Austria}\par\nopagebreak
    \textit{E-mail address}: \texttt{monika.doerfler@univie.ac.at}
    
    \medskip
    
    Franz Luef, \textsc{Department of Mathematical Sciences, Norwegian University of Science and Technology, 7034
    Trondheim, Norway}\par\nopagebreak
    \textit{E-mail address}: \texttt{franz.luef@ntnu.no}
    
    \medskip
    
    Henry McNulty, \textsc{Cognite AS, 1366 Lysaker, Norway}\par\nopagebreak
    \textsc{Department of Mathematical Sciences, Norwegian University of Science and Technology, 7491
    Trondheim, Norway}\par\nopagebreak
    \textit{E-mail address}: \texttt{henry.mcnulty@cognite.com}
    
    \medskip
    
    Eirik Skrettingland, \textit{E-mail address}: \texttt{skrettingland.eirik@gmail.com}

}}
\title{Time-Frequency Analysis and Coorbit Spaces of Operators}
\author{
  Monika Dörfler
  \and Franz Luef
  \and Henry McNulty 
  \and Eirik Skrettingland
  }
  \keywords{Operator-valued short-time Fourier transform, vector-valued reproducing kernel Hilbert spaces, coorbit spaces of operators, Toeplitz operators}
\subjclass{40E05; 47G30; 47B35; 47B10}
\begin{document}

\maketitle
\begin{abstract}
    We introduce an operator valued Short-Time Fourier Transform for certain classes of operators with operator windows, and show that the transform acts in an analogous way to the Short-Time Fourier Transform for functions, in particular giving rise to a family of vector-valued reproducing kernel Banach spaces, the so called coorbit spaces, as spaces of operators. As a result of this structure the operators generating equivalent norms on the function modulation spaces are fully classified. We show that these operator spaces have the same atomic decomposition properties as the function spaces, and use this to give a characterisation of the spaces using localisation operators.
\end{abstract}

\section{Introduction}
In time-frequency analysis, the modulation spaces $M^{p,q}_m(\mathbb{R}^d)$, first introduced by Feichtinger in 1983 \cite{feich83}, play a central role, where they define spaces of functions with certain desirable time-frequency decay. In particular the Feichtinger algebra, $M^1(\mathbb{R}^d)$ or $\mathbf{S}_0(\mathbb{R}^d)$ \cite{feich81} \cite{feich79}, gives well concentrated functions in the time-frequency sense, which are for many purposes the ideal atoms for Gabor analysis. The modulation spaces are usually defined in terms of the Short-Time Fourier Transform (STFT), namely as the spaces
\begin{align*}
    M^{p,q}_m(\mathbb{R}^{d}) := \{\psi\in\mathcal{S}'(\mathbb{R}^d): \Big( \int_{\mathbb{R}^d}\Big(\int_{\mathbb{R}^d} |V_{\varphi_0} \psi(z)|^p m(x,\omega)^p dx\Big)^{q/p}d\omega\Big)^{1/q} < \infty \},
\end{align*}
where $\varphi_0$ is the Gaussian. Modulation spaces and their various generalisations have been studied extensively, and surveys and monographs can be found in \cite{jakob18} \cite{benyimodspaces} \cite{grochenigtfa}. The properties and utility of these function spaces are too broad to hope to cover, but of particular interest to our work is that these spaces are the coorbit spaces \cite{feich89i} \cite{feich89ii} of the projective unitary representation of the reduced Weyl-Heisenberg group, and as such have (among others) the following properties:
\begin{enumerate}
    \item All $g\in L^2(\mathbb{R}^d)$ that satisfy the condition $V_g g \in L^1_v(\mathbb{R}^{2d})$, generate the same modulation spaces $M^{p,q}_m(\mathbb{R}^{d})$ as windows, and their norms are equivalent.
    \item (\textit{Correspondence Principle}) Given an atom $g$ as above, there is an isometric isomorphism $M^{p,q}_m(\mathbb{R}^{d}) \cong \{F\in L^{p,q}_m(\mathbb{R}^{2d}): F = F \natural V_g g\}$ (where $\natural$ is the twisted convolution discussed below), given by $V_g$. Note that the later are reproducing kernel Banach spaces.
\end{enumerate}
There is a vast body of contributions to the theory of coorbit spaces, e.g. \cite{bagr17,chol11,hovo21}.
In this work we examine spaces of operators exhibiting similar properties, by introducing an STFT with operator window and argument, returning an operator-valued function on phase space. One motivation comes from \cite{Dorf21}, where local structures of a data set $\mathcal{D}=\{f_1,...,f_N\}$ were identified via mapping the data points of functions $f_i$ on $\mathbb{R}^d$ to rank-one operators $f_i\otimes f_i$, and constructing the data operator $S_{\mathcal{D}}=\sum_{i=1}^N f_i\otimes f_i$. Hence, it would be of interest to compare to data sets $\mathcal{D}$ and $\mathcal{D}^\prime$ via its respective data operators $S_\mathcal{D}$ and $S_{\mathcal{D}^\prime}$. Another source of inspiration is the work \cite{keyl16}, where operator analogues of the Schwartz class of functions and of the space of tempered distributions have been introduced and their basic theory has been developed along the lines of the function/distribution case. 

The concept of an STFT for operators is not a new one. In \cite{balasz2019}, the authors consider the wavelet transform for the representation $\pi(w)\otimes\pi(z)$ on $\mathcal{HS}=L^2(\mathbb{R}^d)\otimes L^2(\mathbb{R}^d)$ to examine kernel theorems for coorbit spaces. This entails using the standard scalar-valued construction for the coorbit spaces defined by the wavelets transform, giving different spaces to our approach. On the other hand nor are vector-valued reproducing kernel Hilbert spaces in time-frequency analysis a new concept. In \cite{balan00} and \cite{abreau10} an STFT is constructed for vectors of functions, which results in a direct sum of Gabor spaces. Our work differs from these in that windows, arguments and resulting output of the operator STFT are all operators. 

In \cite{skrett22}, the author introduced an equivalent notion of a STFT with an operator window, given by 
\begin{align} \label{functionstft}
    \mathfrak{V}_S \psi(z) := S\pi(z)^* \psi
\end{align}
for some appropriate operator $S$ and function $\psi$. In particular, the question was considered of which operators would define equivalent norms on $M^{p,q}_m(\mathbb{R}^{d})$ under this STFT, that is, for which operators
\begin{align*}
    \|\psi\|_{M^{p,q}_m} \asymp  \|S\pi(z)^*\psi\|_{L^{p,q}_m(\mathbb{R}^{2d};L^2)}.
\end{align*}
In further work by Guo and Zhao \cite{Guo22}, some equivalent conditions for equivalence were given. In both works a class of operators with adjoints in a certain class of nuclear operators was discussed, along with the open question in the latter of whether these operators exhausted all possible operators generating equivalent norms on $M^{p,q}_m(\mathbb{R}^{d})$. In this work we present an extension of the operator window STFT \eqref{functionstft}, which acts on operators instead of functions. We initially define such a transform for $S,T\in\mathcal{HS}$ in the following manner:
\begin{definition}{(Operator STFT)}
For $S,T\in\mathcal{HS}$, the STFT of $T$ with window $S$, is given by
\begin{align}
    \mathfrak{V}_S T(z) := S^*\pi(z)^* T.
\end{align}
\end{definition}
Note that in the case of rank-one operators $S=g\otimes e$ and $T=f\otimes e$ for $e,f,g\in L^2(\mathbb{R}^d)$ the operator STFT becomes $V_gf(z)e\otimes e$, which is the STFT of functions embedded into the space of Hilbert-Schmidt operator-valued functions.

We examine the behaviour of this transform, e.g. Moyal's identity, paying particular attention to the spaces it produces as images. In this respect the first result of this paper demonstrates a parallel to the STFT of functions, regarding the reproducing structure of the image of the Hilbert space of Hilbert-Schmidt operators:\todo{M: maybe make new own contributions more explicit}
\begin{theorem}
    For any Hilbert-Schmidt operator $S$, the space defined by
    \begin{align*}
        \mathfrak{V}_S (\mathcal{HS}) := \{\mathfrak{V}_S T(z): T\in \mathcal{HS}\}
    \end{align*}
    is a vector-valued uniform reproducing kernel Hilbert space as a subspace of the Bochner-Lebesgue space $L^2(\mathbb{R}^{2d};\mathcal{HS})$.
\end{theorem}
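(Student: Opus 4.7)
The plan is to exploit a Moyal-type orthogonality relation for the operator STFT, which will simultaneously yield closedness of the image $\mathfrak{V}_S(\mathcal{HS})$ inside $L^2(\mathbb{R}^{2d};\mathcal{HS})$, bounded point evaluations, and uniformity of the bound. The central analytic tool is the operator resolution of identity $\int_{\mathbb{R}^{2d}} \pi(z) A \pi(z)^*\, dz = \operatorname{tr}(A)\, I$, valid weakly for trace-class $A$, which applies to $A = SS^*$ since $S \in \mathcal{HS}$.

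First I would establish a Moyal-type identity: for $T,B\in\mathcal{HS}$, using cyclicity of the trace,
\begin{align*}
  \langle \mathfrak{V}_S T, \mathfrak{V}_S B \rangle_{L^2(\mathbb{R}^{2d};\mathcal{HS})}
  &= \int_{\mathbb{R}^{2d}} \operatorname{tr}\!\big(B^* \pi(z) SS^* \pi(z)^* T\big)\, dz \\
  &= \operatorname{tr}\!\Big(TB^* \int_{\mathbb{R}^{2d}} \pi(z) SS^* \pi(z)^*\, dz\Big)
   = \|S\|_{\mathcal{HS}}^2\, \langle T,B\rangle_{\mathcal{HS}}.
\end{align*}
Thus $\|S\|_{\mathcal{HS}}^{-1}\mathfrak{V}_S$ is an isometric embedding of $\mathcal{HS}$ into $L^2(\mathbb{R}^{2d};\mathcal{HS})$, so its range is a closed Hilbert subspace. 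Next, since $\pi(z)$ is unitary and $\|CD\|_{\mathcal{HS}} \le \|C\|_{\mathrm{op}}\|D\|_{\mathcal{HS}} \le \|C\|_{\mathcal{HS}}\|D\|_{\mathcal{HS}}$, the pointwise bound $\|\mathfrak{V}_S T(z)\|_{\mathcal{HS}} \le \|S\|_{\mathcal{HS}}\|T\|_{\mathcal{HS}} = \|\mathfrak{V}_S T\|_{L^2(\mathbb{R}^{2d};\mathcal{HS})}$ holds with a constant independent of $z$, giving uniformity of the point-evaluation functionals.

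To exhibit the operator-valued reproducing kernel, I would compute, for $z\in\mathbb{R}^{2d}$ and $A\in\mathcal{HS}$,
\[
  \langle \mathfrak{V}_S T(z), A\rangle_{\mathcal{HS}} = \operatorname{tr}(A^* S^* \pi(z)^* T) = \langle T, \pi(z) S A\rangle_{\mathcal{HS}},
\]
which by the Moyal identity above equals $\|S\|_{\mathcal{HS}}^{-2}\langle \mathfrak{V}_S T, \mathfrak{V}_S(\pi(z)SA)\rangle_{L^2}$. This pins down the kernel $K(w,z)\colon \mathcal{HS}\to\mathcal{HS}$, $K(w,z)A = \|S\|_{\mathcal{HS}}^{-2}\, S^* \pi(w)^* \pi(z) S A$, whose norm is manifestly independent of $z$.

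The main obstacle is the rigorous interpretation of $\int \pi(z) SS^* \pi(z)^*\, dz$, which does not converge in any operator norm; one reads it as a weak integral paired against the trace-class operator $TB^*$, reducing the assertion to the scalar Moyal formula applied to rank-one operators and extended by density. Once this step is in place, the remainder is a direct transcription of the classical RKHS structure of the function-valued STFT to the operator setting.
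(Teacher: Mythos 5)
Your proof is correct and follows essentially the same route as the paper: the Moyal-type orthogonality relation (which the paper records separately and proves via the generalised Moyal identity of quantum harmonic analysis) gives that $\|S\|_{\mathcal{HS}}^{-1}\mathfrak{V}_S$ is an isometry and hence has closed range, while uniform boundedness of point evaluations comes from the same estimate $\|S^*\pi(z)^*T\|_{\mathcal{HS}}\le\|S^*\pi(z)^*\|_{\mathcal{HS}}\|T\|_{\mathcal{HS}}=\|S\|_{\mathcal{HS}}\|T\|_{\mathcal{HS}}$. Your explicit identification of the kernel $K(w,z)A=\|S\|_{\mathcal{HS}}^{-2}S^*\pi(w)^*\pi(z)SA$ agrees with the computation the paper carries out immediately after its proof.
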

Motivated by this, we extend the reproducing properties of this space to the "coorbit spaces", and consider the spaces $\mathfrak{A}_v := \{S\in \mathcal{HS}: \mathfrak{V}_S S \in L^1_v(\mathbb{R}^{2d};\mathcal{HS})\}$, and \todo{M: when using not so popular notion refer to section where they will be defined?}$\mathfrak{M}^{p,q}_m := \{T\in \mathfrak{S}': \mathfrak{V}_{S} T \in L^{p,q}_m(\mathbb{R}^{2d};\mathcal{HS})\}$, where $\mathfrak{S}'$ are operators with Weyl symbols in $\mathscr{S}^\prime$ and $S\in\mathfrak{A}_v$, to derive the result
\begin{theorem}
    For any $S\in \mathfrak{M}^1_v$, we have an isometric isomorphism
    \begin{align*}
        \mathfrak{M}^{p,q}_m \cong \{\Psi\in L^{p,q}_m(\mathbb{R}^{2d};\mathcal{HS}): \Psi = \Psi \natural \mathfrak{V}_S S\}
    \end{align*}
    under the mapping
    \begin{align*}
        T \mapsto \mathfrak{V}_S T,
    \end{align*}
\end{theorem}
 the twisted convolution $\natural$ is to be defined in \cref{rkhs}. Furthermore, for all $S\in\mathfrak{A}_v$ the resulting spaces coincide, and the associated norms are equivalent. The dual space of $\mathfrak{M}^{p,q}_m$ is $\mathfrak{M}^{p',q'}_{1/m}$, where $\frac{1}{p}+\frac{1}{p'}=1$, $\frac{1}{q}+\frac{1}{q'}=1$ with the usual adjustment for $p,q=1,\infty$. As a corollary of the coorbit structure and independence of windows, we characterise operators satisfying the equivalent norm condition;
\begin{corollary}
    The operators which define equivalent norms on the spaces $M^{p,q}_m(\mathbb{R}^{d})$ by
    \begin{align*}
        \|S^*\pi(z)^*\psi\|_{L^{p,q}_m(\mathbb{R}^{2d};L^2(\mathbb{R}^d))}
    \end{align*}
    or every $1\leq p,q \leq \infty$ and $v$-multiplicative $m$, are precisely the admissible operators 
    \begin{align*}
        \mathfrak{A}_v := \{S: \mathfrak{V}_S S \in L^1_v(\mathbb{R}^{2d};\mathcal{HS})\},
    \end{align*}
\end{corollary}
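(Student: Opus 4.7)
The plan is to reduce the corollary to the operator-level coorbit theory established in Theorem~2, using a rank-one identification to translate the function-space norm into the operator-valued setting. Fix a unit vector $\eta\in L^2(\mathbb{R}^d)$ and, for $\psi\in L^2(\mathbb{R}^d)$, set $T_\psi:=\psi\otimes\eta$. Then
\[
\mathfrak{V}_S T_\psi(z)=(S^*\pi(z)^*\psi)\otimes\eta,\qquad \|\mathfrak{V}_S T_\psi(z)\|_{\mathcal{HS}}=\|S^*\pi(z)^*\psi\|_{L^2},
\]
so the function-level mixed norm appearing in the statement coincides with the operator-level norm $\|\mathfrak{V}_S T_\psi\|_{L^{p,q}_s(\mathbb{R}^{2d};\mathcal{HS})}$. (I read the $L^2$ subscript in the statement as a typo for the mixed-norm $L^{p,q}_s$, since otherwise the $(p,q,s)$ indexing would be vacuous.)

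For the implication $S\in\mathfrak{A}_v\Rightarrow$ equivalent norms, the window-independence of the operator coorbit spaces asserted immediately after Theorem~2 furnishes $\|\mathfrak{V}_S T\|_{L^{p,q}_s(\mathcal{HS})}\asymp\|\mathfrak{V}_{S_0}T\|_{L^{p,q}_s(\mathcal{HS})}$ for every $T\in\mathfrak{M}^{p,q}_s$, where $S_0:=\varphi_0\otimes\varphi_0$. Applying this to $T=T_\psi$ and computing $\mathfrak{V}_{S_0}T_\psi(z)=V_{\varphi_0}\psi(z)\,(\varphi_0\otimes\eta)$ — whose mixed norm equals $\|\psi\|_{M^{p,q}_s}$ up to the constant $\|\varphi_0\|_{L^2}\|\eta\|_{L^2}$ — yields $\|S^*\pi(z)^*\psi\|_{L^{p,q}_s(L^2)}\asymp\|\psi\|_{M^{p,q}_s}$.

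The converse is the main obstacle, and the route I would take proceeds in three steps. First, transfer the equivalence from functions to rank-one operators via the bridge above, so that $\|\mathfrak{V}_S T\|_{L^{p,q}_s(\mathcal{HS})}\asymp\|\mathfrak{V}_{S_0}T\|_{L^{p,q}_s(\mathcal{HS})}$ for every $T=\psi\otimes\eta$, and extend to finite-rank $T$ by linearity. Second, extend the equivalence to all of $\mathfrak{M}^{p,q}_s$ by a density argument, using that finite-rank operators are dense in the operator modulation spaces (which should follow from the atomic decomposition and reproducing kernel structure of Theorem~1). Third, apply the upgraded equivalence at $p=q=1$ with weight $v$, specialised to $T=S_0$: this gives $\|\mathfrak{V}_S S_0\|_{L^1_v(\mathcal{HS})}<\infty$, and then an operator analogue of the classical identification $M^1_v=\{g:V_gg\in L^1_v\}$ — derivable from the correspondence principle of Theorem~2 together with the twisted-convolution reproducing relation — promotes this to $\mathfrak{V}_S S\in L^1_v$, i.e.\ $S\in\mathfrak{A}_v$.

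The principal technical difficulty is the density and continuity step: rank-one operators must be shown to be norm-dense in $\mathfrak{M}^{p,q}_s$ in a way compatible with the norm transfer, and the equivalence constants must remain uniform in the limit, all without circularly invoking the admissibility of $S$ that we are trying to prove. I expect this step to require an explicit Gabor-type atomic decomposition of elements of $\mathfrak{M}^{p,q}_s$ into sums of rank-one time-frequency shifts of $S_0$, combined with an approximation in an intermediate space (such as finite-rank elements of $\mathfrak{M}^1_v$) where both sides of the desired equivalence are a priori finite.
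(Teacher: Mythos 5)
Your forward direction is exactly the paper's: identify $\psi$ with the rank-one operator $\psi\otimes\eta$, note $\|\mathfrak{V}_S(\psi\otimes\eta)(z)\|_{\mathcal{HS}}=\|S^*\pi(z)^*\psi\|_{L^2}$, and invoke window-independence of $\mathfrak{M}^{p,q}_m$ for $S\in\mathfrak{A}_v$. (Your reading of the $L^2$ subscript as $L^{p,q}_m$ is also the intended one; compare the restatement of the corollary in the body of the paper.)

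The converse is where you diverge, and your steps 1--2 --- extending the equivalence from rank-one to finite-rank operators and then to all of $\mathfrak{M}^{p,q}_m$ by density --- are an unnecessary detour. You correctly flag the density step as the principal difficulty (and indeed, as you set it up, keeping the constants uniform without circularly assuming admissibility of $S$ is genuinely problematic), but the step is simply not needed: the object you ultimately test against in step 3, namely $S_0=\varphi_0\otimes e_0$, is itself rank one. The hypothesis of the corollary, applied at $(p,q)=(1,1)$ with weight $v$ to the single function $\psi=\varphi_0\in M^1_v$, already gives $\|\mathfrak{V}_S S_0\|_{L^1_v(\mathbb{R}^{2d};\mathcal{HS})}\asymp\|\varphi_0\|_{M^1_v}<\infty$ via your own rank-one bridge; no extension beyond rank one ever occurs. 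This is precisely the paper's argument. The remaining promotion from $\mathfrak{V}_S S_0\in L^1_v$ to $S\in\mathfrak{A}_v$, which you describe as needing "an operator analogue of $M^1_v=\{g:V_gg\in L^1_v\}$", is already available in the paper: Proposition \ref{stftbasics}(i) gives $\|\mathfrak{V}_S S_0(z)\|_{\mathcal{HS}}=\|\mathfrak{V}_{S_0}S(-z)\|_{\mathcal{HS}}$, so $\mathfrak{V}_S S_0\in L^1_v$ is equivalent to $S\in\mathfrak{M}^1_v$, and the corollary $\mathfrak{M}^1_v=\mathfrak{A}_v$ (proved immediately before the statement in question) finishes the argument. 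So your step 3 is sound once you route it through these two facts; delete steps 1--2 of the converse rather than trying to repair them.
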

We finally consider the atomic decomposition of operators in the $\mathfrak{M}^{p,q}_m$, which follows from the same arguments as the function case given the coorbit structure. Using this we can characterise the spaces using localisation operators:
\begin{corollary}
    Let $\varphi\in L^2(\mathbb{R}^d)$ be non-zero and $h\in L^1_v(\mathbb{R}^{2d})$ be some non-negative symbol satisfying
    \begin{align*}
        A \leq \sum_{\lambda\in\Lambda} h(z-\lambda) \leq B
    \end{align*}
    for positive constants $A,B$, and almost all $z\in\mathbb{R}^{2d}$. Then for every $v$-moderate weight $m$ and $1\leq p < \infty$ the operator $T\in\mathfrak{M}^{\infty}_{1/v}$ belongs to $\mathfrak{M}^{p,q}_m$ if and only if 
    \begin{align*}
        \big\{ A_{\overline{h}}^{\varphi} \pi(\lambda)^* T \big\}_{\lambda\in\Lambda} \in l^{p,q}_m(\Lambda;\mathcal{HS}).
    \end{align*}
    where $\Lambda = \alpha\mathbb{Z} \times \beta\mathbb{Z}$ is some full rank lattice.
\end{corollary}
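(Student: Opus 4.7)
The plan is to identify the sequence $\{A_{\overline{h}}^{\varphi}\pi(\lambda)^* T\}_{\lambda\in\Lambda}$ as the sampling on $\Lambda$ of the operator STFT $\mathfrak{V}_S T$ for the admissible window $S := (A_{\overline{h}}^{\varphi})^*$, and then to invoke the window-independence and atomic decomposition results already established earlier in the paper. The definition $\mathfrak{V}_S T(z) = S^*\pi(z)^* T$ gives directly $\mathfrak{V}_S T(\lambda) = A_{\overline{h}}^{\varphi}\pi(\lambda)^* T$, so the corollary reduces to verifying (i) that $S\in\mathfrak{A}_v$, and (ii) that membership of $T$ in $\mathfrak{M}^{p,q}_m$ is equivalent to the sampled sequence lying in $l^{p,q}_m(\Lambda;\mathcal{HS})$.

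For (i) I would compute $\mathfrak{V}_S S$ using the standard convolution identity for localization operators, which reduces it to a twisted convolution of $h$ with itself weighted by the ambiguity function $V_\varphi\varphi$. Since $h\in L^1_v$, weighted Young-type inequalities place $\mathfrak{V}_S S$ in $L^1_v(\mathbb{R}^{2d};\mathcal{HS})$, giving $S\in\mathfrak{A}_v$. Once this is established, the window-independence of the coorbit spaces yields the norm equivalence $\|T\|_{\mathfrak{M}^{p,q}_m} \asymp \|\mathfrak{V}_S T\|_{L^{p,q}_m(\mathbb{R}^{2d};\mathcal{HS})}$, so the task reduces to comparing this continuous $L^{p,q}_m$ norm with the discrete $l^{p,q}_m$ norm of its samples on $\Lambda$.

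For (ii) the covering inequality $A \leq \sum_\lambda h(z-\lambda) \leq B$ is exactly the partition-of-unity hypothesis required by the atomic decomposition theory to deliver a bounded and stably invertible sampling--synthesis pair between $\mathfrak{M}^{p,q}_m$ and $l^{p,q}_m(\Lambda;\mathcal{HS})$. The upper bound $B$ controls the sampling map $T\mapsto \{\mathfrak{V}_S T(\lambda)\}_\lambda$, while the lower bound $A$ drives the stability of the synthesis map $(c_\lambda)\mapsto \sum_\lambda c_\lambda\cdot\pi(\lambda)(\mathfrak{V}_S S)$ from $l^{p,q}_m$ back into $\mathfrak{M}^{p,q}_m$, via the reproducing identity $\Psi = \Psi\natural \mathfrak{V}_S S$ supplied by Theorem 1.2.

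The main obstacle is the sufficient direction, where one must reconstruct $T$ from its discrete samples alone. I expect to handle this by the standard Feichtinger--Gr\"ochenig Neumann-series argument: show that sampling composed with synthesis differs from the identity on $l^{p,q}_m$ by an operator of norm strictly less than one, which is secured by the lower bound $A>0$ in the covering condition together with the $L^1_v$ decay of $\mathfrak{V}_S S$. Once this operator-valued frame machinery is in place, the remainder of the argument is a direct transcription of the classical scalar proof, with twisted convolution replacing ordinary convolution and $\mathfrak{V}_S S$ serving as the reproducing kernel.
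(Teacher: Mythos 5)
Your overall skeleton matches the paper's: identify $\{A_{\overline{h}}^{\varphi}\pi(\lambda)^*T\}_{\lambda}$ as the samples of $\mathfrak{V}_S T$ for $S=(A_{\overline{h}}^{\varphi})^*$, show $S\in\mathfrak{A}_v$, and then feed this into the analysis/synthesis machinery of the atomic decomposition section. Two of your steps, however, do not go through as described. First, the admissibility step: the paper does not derive $A_{h}^{\varphi}\in\mathfrak{A}_v$ from a Young-type estimate under the hypothesis $h\in L^1_v$; it cites Proposition 7.14 of \cite{skrett21}, which requires the \emph{stronger} hypothesis $h\in L^1_{v^2}$ to conclude $A_h^{\varphi}\in M^1_v\otimes M^1_v\subset\mathfrak{A}_v$ (and indeed the version of the corollary in the body of the paper is stated with $h\in L^1_{v^2}$). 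Your sketch assumes $h\in L^1_v$ suffices; that is exactly the point where a careful computation of $\mathfrak{V}_S S$ is needed, and the weighted Young inequality you invoke will naturally produce a $v^2$-weight because two copies of $h$ appear. You should either strengthen the hypothesis or supply the missing estimate.

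Second, and more seriously, the sufficiency direction. You propose the Feichtinger--Gr\"ochenig Neumann-series argument, claiming that $\|I-D_SC_S\|<1$ is ``secured by the lower bound $A>0$ in the covering condition together with the $L^1_v$ decay of $\mathfrak{V}_S S$.'' That mechanism requires a sufficiently \emph{fine} sampling lattice; here $\Lambda$ is fixed and arbitrary, and the covering condition on $h$ gives no smallness of $I-D_SC_S$ whatsoever. The actual route is different: the covering condition is precisely the hypothesis of Theorem 8 of \cite{dor11}, whose $p=2$, $m\equiv1$ case says that $\{A_{\overline h}^{\varphi}\pi(\lambda)^*\}_{\lambda}$ is a Gabor g-frame on $L^2$ (the invertibility of the frame operator comes from sandwiching a localisation operator with symbol $\sum_\lambda h(\cdot-\lambda)$ between $A\cdot I$ and $B\cdot I$, not from a perturbation of the identity). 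The paper then lifts this g-frame from $L^2$ to $\mathcal{HS}$ via the preceding Claim, and finally applies the corollary on dual Gabor g-frames with windows in $\mathfrak{A}_v$ to extend the norm equivalence from $\mathcal{HS}$ to all $\mathfrak{M}^{p,q}_m$. Without some substitute for that $L^2$ frame input, your argument cannot produce the lower bound, so as written the ``if'' direction is unproved.
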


\section{Preliminaries}
\subsection{Time-Frequency Analysis Basics}
While coorbit spaces are defined in general for integrable representations of locally compact groups, modulation spaces of functions and the spaces discussed in this work arise from the particular case of the time-frequency shifts $\pi(z)$, the projective unitary representation of the reduced Weyl-Heisenberg group on the Hilbert space $L^2(\mathbb{R}^d)$. Such shifts can be defined as the composition of the translation operator $T_x:f(t)\mapsto f(t-x)$, and the modulation operator $M_{\omega}:f(t)\mapsto e^{2\pi i \omega t}f(t)$, by the identity
\begin{align*}
    \pi(z) = M_{\omega}T_x
\end{align*}
where $z=(x,\omega)\in \mathbb{R}^{2d}$. Direct calculations show that $\pi(z)$ is unitary on $L^2(\mathbb{R}^d)$, and that we have
\begin{align*}
    \pi(z)\pi(z') &= e^{-2\pi i \omega' x}\pi(z+z') \\
    \pi(z)^* &= e^{-2\pi i x \omega}\pi(-z).
\end{align*}
The Short-Time Fourier Transform (STFT) for functions is then defined, for two functions $f,g\in L^2(\mathbb{R}^d)$, by
\begin{align}
    V_g f(z) := \langle f, \pi(z) g\rangle_{L^2}.
\end{align}
The window function $g$ is usually chosen to have compact support, or be concentrated around the origin, such as in the case of the normalised Gaussian $\varphi_0(t)=2^{d/4}e^{\pi t^2}$. For $f,g\in L^2(\mathbb{R}^{2d})$, $V_g f$ is uniformly continuous as a function in $L^2(\mathbb{R}^{2d})$, which will be instructive when considering reproducing kernel Hilbert spaces later. One has for the STFT \textit{Moyal's Identity} (see for example Theorem 3.2.1 of \cite{grochenigtfa}), giving an understanding of the basic properties of the STFT in terms of its window:
\begin{lemma}{(Moyal's Identity)}
Given functions $f_1,f_2,g_1,g_2\in L^2(\mathbb{R}^d)$, we have $V_{g_1} f_1,\,V_{g_2} f_2\in L^2(\mathbb{R}^{2d})$, and in addition:
\begin{align*}
    \langle V_{g_1} f_1, V_{g_2} f_2 \rangle_{L^2(\mathbb{R}^{2d})} = \langle f_1, f_2 \rangle_{L^2(\mathbb{R}^d)} \overline{\langle g_1, g_2 \rangle_{L^2(\mathbb{R}^d)}}.
\end{align*}
\end{lemma}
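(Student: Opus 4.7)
The plan is to reduce the inner product on phase space to a double integral and recognise the inner $\omega$-integral as an instance of Plancherel's theorem, then evaluate the remaining integral as a window inner product. To begin I would observe, using the integral representation $V_g f(x,\omega) = \int f(t)\overline{g(t-x)} e^{-2\pi i \omega t}\,dt$, that for each fixed $x$ the function $\omega \mapsto V_g f(x,\omega)$ is precisely the Fourier transform of $t \mapsto f(t)\overline{g(t-x)}$. This is exactly the first identity $V_g f = \widehat{f \cdot T_x \overline{g}}$ recorded in the preliminaries.

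Next, assuming all integrands are sufficiently nice (say, Schwartz), I would expand
\begin{align*}
    \langle V_{g_1}f_1, V_{g_2}f_2\rangle_{L^2(\mathbb{R}^{2d})} = \int_{\mathbb{R}^d}\int_{\mathbb{R}^d} V_{g_1}f_1(x,\omega)\overline{V_{g_2}f_2(x,\omega)}\,d\omega\,dx,
\end{align*}
apply Plancherel in $\omega$ to the inner integral to replace it by $\int f_1(t)\overline{f_2(t)}\,\overline{g_1(t-x)}g_2(t-x)\,dt$, and then use Fubini to interchange the $t$ and $x$ integrations. The inner $x$-integral, after the substitution $s = t - x$, is independent of $t$ and equals $\int \overline{g_1(s)}g_2(s)\,ds = \overline{\langle g_1, g_2\rangle_{L^2}}$. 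Pulling this constant out and evaluating what remains gives $\overline{\langle g_1,g_2\rangle}\langle f_1,f_2\rangle$, which is the desired identity.

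The only real obstacle is justifying these exchanges for arbitrary $L^2$ windows and signals, since Fubini a priori requires absolute integrability. I would handle this by first running the calculation on the dense subspace $\mathcal{S}(\mathbb{R}^d)$, where all interchanges are legitimate; this yields in particular the norm identity $\|V_g f\|_{L^2(\mathbb{R}^{2d})} = \|f\|_{L^2}\|g\|_{L^2}$ on Schwartz functions. Since the STFT is sesquilinear and continuous in $f,g$ with respect to the $L^2$-norms (as the norm identity shows), a standard density/approximation argument extends the identity to all $f_1,f_2,g_1,g_2 \in L^2(\mathbb{R}^d)$, and in passing confirms that $V_{g_j}f_j \in L^2(\mathbb{R}^{2d})$ as asserted.
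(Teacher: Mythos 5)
Your argument is correct and is essentially the standard proof the paper defers to (it gives no proof of its own, citing Theorem 3.2.1 of Gr\"ochenig's book, which proceeds exactly via $V_g f = \widehat{f\cdot T_x\overline{g}}$, Parseval in $\omega$, Fubini, and a density argument to handle general $L^2$ windows). No gaps to report.
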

As a direct consequence, we have that for any $g \in L^2(\mathbb{R}^d)$ such that $\|g\|_{L^2}=1$, the map $V_g: L^2(\mathbb{R}^d)\to L^2(\mathbb{R}^{2d})$ is an isometry. As such, we can consider the inverse mapping. Rearranging Moyal's identity shows the reconstruction formula 
\begin{align}
    f = \int_{\mathbb{R}^{2d}} V_g f(z) \pi(z)g\, dz,
\end{align}
for any $g\in L^2(\mathbb{R}^d)$ with $\|g\|=1$. A direct calculation then shows that the adjoint $V_g^*$ is given by 
\begin{align}
    V_g^*(F) :=  \int_{\mathbb{R}^{2d}} F(z) \pi(z)g\, dz,
\end{align}
where the integral can be interpreted in the weak sense, and so from the reconstruction formula
\begin{align*}
    V_g^* V_g = I_{L^2(\mathbb{R}^d)}.
\end{align*}

\subsection{Weight functions and mixed-norm spaces}
We begin by defining a sub-multiplicative weight $v$ as a non-negative, locally integrable function on phase space $\mathbb{R}^{2d}$ satisfying the condition
\begin{align*}
    v(z_1+z_2) \leq v(z_1)v(z_2)
\end{align*}
for all $z_1,z_2\in\mathbb{R}^{2d}$. As a direct result, $v(0)\geq 1$. A $v$-moderate weight $m$ is then a non-negative, locally integrable function on phase space such that 
\begin{align*}
    m(z_1 + z_2) \leq v(z_1)m(z_2)
\end{align*}
for all $z_1,z_2\in\mathbb{R}^{2d}$. As a particular consequence, we have for such a $v,m$ that 
\begin{align*}
    \frac{1}{C_{v,m}v(z)} \leq m(z) \leq C_{v,m}v(z).
\end{align*}
In this work we consider weights of at most polynomial growth. We define the weighted, mixed-norm space $L^{p,q}_m(\mathbb{R}^{2d})$, for $1\leq p,q < \infty$, as the functions for which the norm
\begin{align*}
    \|F\|_{L^{p,q}_m} := \Big(\int_{\mathbb{R}^d}\Big(\int_{\mathbb{R}^d} |F(x,\omega)|^p m(x,\omega)^p\, dx\Big)^{q/p}\, d\omega \Big)^{1/q}
\end{align*}
is finite. In the case where $p$ or $q$ is infinite, we replace the corresponding integral with essential supremum. For such spaces we have the duality $(L^{p,q}_m(\mathbb{R}^{2d}))' = L^{p',q'}_{1/m}(\mathbb{R}^{2d})$, where $\frac{1}{p}+\frac{1}{p'} = 1$, $\frac{1}{q}+\frac{1}{q'} = 1$. Further details on weights and mixed-norm spaces can be found in chapter 11, \cite{grochenigtfa}. In this work we consider discretisation over the full rank lattice $\Lambda = \alpha\mathbb{Z}^{d}\times \beta\mathbb{Z}^{d}$. An arbitrary lattice $\Lambda = A\mathbb{Z}^{2d}$, $A\in GL(2d;\mathbb{R})$ can also be used, but the notion of mixed-norm becomes less clear. We define the mixed-norm weighted sequence space $l^{p,q}_m(\Lambda;\mathcal{HS})$ as the sequences $a_{(k,l)}$ such that 
\begin{align*}
    \|a\|_{l^{p,q}_m(\Lambda;\mathcal{HS})} := \Big(\sum_{n\in\mathbb{Z}^d} \big(\sum_{k\in\mathbb{Z}^d} m(\alpha k,\beta l)^p \|a_{\alpha k,\beta l}\|_{\mathcal{HS}}^p \big)^{q/p} \Big)^{1/q} < \infty.
\end{align*}
The Wiener Amalgam spaces introduced in \cite{feich83i} provide the required framework for sampling estimates on the lattice. To that end we define for a given function $\Psi: \mathbb{R}^{2d}\to\mathcal{HS}$ the sequence
\begin{align*}
     a^{\Psi}_{(k,l)} = \Big(\esssup_{x,\omega\in [0,1]^d} \|\Psi(x+k,\omega+l)\|_{\mathcal{HS}}\Big)_{(k,l)}.
\end{align*}
\begin{definition}\label{Wieneramalg}
    Let $1\leq p,q \leq \infty$ and $m$ be some weight function. The Wiener Amalgam space $W(L^{p,g}_m(\mathbb{R}^{2d};\mathcal{HS}))$ consists of all functions $\Psi: \mathbb{R}^{2d}\to\mathcal{HS}$ such that
    \begin{align*}
        \|a^{\Psi}_{(k,l)}\|_{l^{p,q}_m} < \infty,
    \end{align*}
    with the norm $\|\Psi\|_{W(L^{p,q}_m(\mathbb{R}^{2d};\mathcal{HS}))} := \|a^{\Psi}_{(k,l)}\|_{l^{p,q}_m}$.
\end{definition}
One feature of the Wiener Amalgam spaces we use (see for example Proposition 11.1.4 of \cite{grochenigtfa}) is the following:
\begin{proposition}\label{wieneramalg}
Let $\Lambda = \alpha\mathbb{Z}^{d}\times \beta\mathbb{Z}^{d}$ and $\Psi\in W(L^{p,q}_m(\mathbb{R}^{2d};\mathcal{HS}))$ be continuous. Then
\begin{align*}
    \|\Psi|_{\Lambda}\|_{l^{p,q}_{\Tilde{m}}(\Lambda;\mathcal{HS})} \leq c\|\Psi\|_{W(L^{p,q}_m(\mathbb{R}^{2d};\mathcal{HS}))}
\end{align*}
where $\Tilde{m}(k,l)=m(\alpha k, \beta l)$, and $c$ depends on the lattice $\Lambda$.
\end{proposition}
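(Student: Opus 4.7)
The plan is to deduce this characterization from the coorbit atomic decomposition of $\mathfrak{M}^{p,q}_m$ (noted just before the statement) together with the Wiener amalgam sampling estimate in Proposition~\ref{wieneramalg}, by identifying the sampled localization-operator norms with a convolution-averaged, sampled version of the operator STFT $\mathfrak{V}_S T$ for the rank-one window $S = \varphi \otimes \varphi$.

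The first step is a pointwise reduction. Using the standard covariance identity $\pi(\lambda) A^{\varphi}_h \pi(\lambda)^{*} = A^{\varphi}_{T_\lambda h}$ and the unitary invariance of the Hilbert-Schmidt norm, I rewrite $\|A^{\varphi}_{\overline h} \pi(\lambda)^{*} T\|_{\mathcal{HS}} = \|A^{\varphi}_{T_\lambda \overline h} T\|_{\mathcal{HS}}$. Expanding the right-hand side as $\int \overline h(z-\lambda)\, \pi(z) S \pi(z)^{*} T\, dz$ with $S = \varphi \otimes \varphi$ self-adjoint, and using unitarity of $\pi(z)$ to get $\|\pi(z) S \pi(z)^{*} T\|_{\mathcal{HS}} = \|\mathfrak{V}_S T(z)\|_{\mathcal{HS}}$, produces the pointwise bound
\[ \|A^{\varphi}_{\overline h} \pi(\lambda)^{*} T\|_{\mathcal{HS}} \leq \int h(z-\lambda)\, \|\mathfrak{V}_S T(z)\|_{\mathcal{HS}}\, dz. \]
For $\varphi$ regular enough that $S \in \mathfrak{A}_v$ (which is covered by the window-independence of $\mathfrak{M}^{p,q}_m$ established earlier), this identifies the sample sequence with a convolution of $h \in L^1_v$ against $\|\mathfrak{V}_S T\|_{\mathcal{HS}}$.

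For the forward implication, I assume $T \in \mathfrak{M}^{p,q}_m$, so that $\|\mathfrak{V}_S T\|_{\mathcal{HS}} \in L^{p,q}_m(\mathbb{R}^{2d})$. Young-type convolution estimates for weighted mixed-norm Wiener amalgams place the convolution $h * \|\mathfrak{V}_S T\|_{\mathcal{HS}}$ in $W(L^{p,q}_m)$, and Proposition~\ref{wieneramalg} then places its restriction to $\Lambda$ in $l^{p,q}_m(\Lambda; \mathcal{HS})$, as required.

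For the reverse implication, I exploit the lower frame bound $\sum_\lambda h(z-\lambda) \geq A$ to obtain the pointwise majorization $\|\mathfrak{V}_S T(z)\|^p_{\mathcal{HS}} \leq A^{-1} \sum_\lambda h(z-\lambda) \|\mathfrak{V}_S T(z)\|^p_{\mathcal{HS}}$; after integrating against $m(z)^p$, interchanging sum and integral, and applying weight moderation $m(z) \leq C\, v(z-\lambda)\, m(\lambda)$, the task reduces to dominating each inner integral $\int h(u)\, v(u)^p\, \|\mathfrak{V}_S T(u+\lambda)\|^p_{\mathcal{HS}}\, du$ by a constant multiple of $\|A^{\varphi}_{\overline h}\, \pi(\lambda)^{*}\, T\|^p_{\mathcal{HS}}$, with an analogous treatment of the $q$-integration. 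This final estimate is the main obstacle, since a naive Jensen's inequality produces the inequality in the wrong direction; I would obtain it instead by duality, using the forward direction already established together with the coorbit duality $(\mathfrak{M}^{p,q}_m)' = \mathfrak{M}^{p',q'}_{1/m}$ announced in the introduction, pairing $T$ against test operators in $\mathfrak{M}^{p',q'}_{1/m}$ whose localization-operator samples lie in $l^{p',q'}_{1/m}(\Lambda; \mathcal{HS})$ and transposing the forward estimate.
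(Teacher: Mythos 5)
Your proposal does not prove the statement at hand. Proposition~\ref{wieneramalg} is the sampling estimate for the vector-valued Wiener amalgam space: a \emph{continuous} $\Psi\in W(L^{p,q}_m(\mathbb{R}^{2d};\mathcal{HS}))$ restricts to a sequence in $l^{p,q}_{\Tilde{m}}(\Lambda;\mathcal{HS})$ with the stated norm bound. What you have sketched is an argument for an entirely different result, namely the paper's final corollary characterising $\mathfrak{M}^{p,q}_m$ by samples of localisation operators $A_{\overline{h}}^{\varphi}\pi(\lambda)^*T$ --- and your sketch explicitly \emph{invokes} Proposition~\ref{wieneramalg} as an ingredient (``Proposition~\ref{wieneramalg} then places its restriction to $\Lambda$ in $l^{p,q}_m$''). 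As a proof of Proposition~\ref{wieneramalg} itself this is circular, and none of the machinery you deploy (covariance of localisation operators, the frame condition on $\sum_\lambda h(z-\lambda)$, coorbit duality) engages with the actual content of the statement, which is a purely local, elementary fact about sampling amalgam-space functions on a lattice and has nothing to do with the operator STFT or with $\mathfrak{M}^{p,q}_m$.

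For comparison, the paper disposes of the proposition by citing Proposition 11.1.4 of Gr\"ochenig's book and observing that the scalar argument transfers verbatim to the $\mathcal{HS}$-valued setting. The argument runs: by continuity the restriction $\Psi|_\Lambda$ is pointwise well defined; each lattice point $(\alpha k,\beta l)$ lies in some unit cube $(k',l')+[0,1]^{2d}$, whence $\|\Psi(\alpha k,\beta l)\|_{\mathcal{HS}}\leq a^{\Psi}_{(k',l')}$, the local essential supremum defining the amalgam norm; each unit cube contains at most $C_{\alpha,\beta}$ points of $\Lambda$; and moderateness of $m$ compares $\Tilde{m}(k,l)=m(\alpha k,\beta l)$ with the weight sampled at the integer point $(k',l')$ up to a constant. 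Summing in $l^{p,q}$ yields the estimate with $c$ depending only on $\alpha$, $\beta$, $d$ and the weight constants. I would add that even read as a proof of the localisation-operator corollary you apparently intended, your sketch is incomplete: you concede that the reverse implication founders on an estimate that Jensen's inequality gives in the wrong direction, and the proposed repair by transposing the forward estimate through the duality $(\mathfrak{M}^{p,q}_m)'=\mathfrak{M}^{p',q'}_{1/m}$ is asserted rather than carried out.
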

While stated for scalar-valued functions in \cite{grochenigtfa}, the same argument gives the vector-valued case. 

\subsection{Reproducing Kernel Hilbert Spaces}\label{rkhs}
\subsubsection{Vector-Valued RKHS}
We recall definitions and identities in this section in terms of \textit{vector-valued} reproducing kernel Hilbert spaces, following the formalism of Paulsen and Raghupathi in chapter 6 of \cite{paulsenrkhs}. The familiar scalar case follows simply by considering the vector space which functions take their values to be $\mathbb{C}$.
\begin{definition}{}
Let $\mathcal{C}$ be a Hilbert space, and $X$ some set. We denote by
$\mathcal{F}(X, \mathcal{C})$ the vector space of $\mathcal{C}$-valued functions under the usual pointwise sum and scalar multiplication. A subspace $\mathcal{H}\subseteq \mathcal{F}$ is a $\mathcal{C}$-valued reproducing Kernel Hilbert Space (RKHS) if it is a Hilbert space, and for every $x\in X$, the evaluation map $E_x:f\to f(x)$ is a bounded operator. If the set $\{E_x\}_{x\in X}$ is uniformly bounded in norm, then $\mathcal{H}$ is referred to as \textit{uniform}.
\end{definition}
Since $\mathcal{H}$ is a Hilbert space, it follows from Riesz' representation theorem that for each $E_x$, there is some $k_x\in \mathcal{H}$ such that $E_x(f) = \langle f, k_x\rangle_{\mathcal{H}}$. It follows from definition that 
\begin{align*}
    |f(x)-g(x)| = |\langle f, k_x\rangle_{\mathcal{H}} - \langle g, k_x\rangle_{\mathcal{H}}| = |\langle f - g, k_x\rangle_{\mathcal{H}}| \leq \|f-g\|_{\mathcal{H}} \|k_x\|_{\mathcal{H}},
\end{align*}
so unlike in the general Hilbert space setting, we have pointwise bounds in terms of norms in the RKHS setting. The \textit{kernel function} $K:X\times X \to \mathcal{L}(\mathcal{C})$ is defined as $K(x,y) = E_x E_y^*$, and has the property $K(x,y) = K(x,y)^*$. The kernel function uniquely defines the RKHS, that is to say given two RKHS' $\mathcal{H}_1,\mathcal{H}_2$, if $K_1(x,y)=K_2(x,y)$ then $\mathcal{H}_1 = \mathcal{H}_2$ and $\|\cdot\|_{\mathcal{H}_1} = \|\cdot \|_{\mathcal{H}_2}$, and vice versa.
\begin{example}
\normalfont Given $g\in L^2(\mathbb{R}^d)$ such that $\|g\|_{L^2}=1$, the \textit{Gabor space} $V_g(L^2(\mathbb{R}^d)) \subset L^2(\mathbb{R}^{2d})$ with norm $\|V_g f\|_{V_g(L^2)}=\|f\|_{L^2}$ is a RKHS with kernel $K(z,z') = \langle \pi(z')g, \pi(z) g\rangle$. 
\end{example}

This result can be deduced by noting that $V_g$ is an isometry onto its image, then proceeding with the adjoint as defined above. 

\subsubsection{Twisted Convolutions}
Reproducing properties of Gabor spaces are intimately connected to the \textit{twisted convolution}, which is defined in terms of the $2$-cocycle of $\pi(z)$, which we define as $c(z,z')=e^{-2\pi ix'(\omega-\omega')}$, such that $\pi(z)^*\pi(z')=c(z,z')\pi(z+z')^*$. We define the twisted convolution for a Lebesgue-Bochner space (for details see for example \cite{DincVecMeas}). The twisted convolution presented here swaps the arguments, but this is only in order to fit our construction of the operator STFT with the standard notation in coorbit theory.
\begin{definition}
Given two operator-valued functions $F,H\in L^2(\mathbb{R}^{2d};\mathcal{HS})$, we define the twisted convolution $\natural$ as
\begin{align*}
    F\natural H(x) = \int_{\mathbb{R}^{2d}} H(x-y)F(y)c(x-y,y)\, dy,
\end{align*}
where the integral can be interpreted in the sense of a Bochner integral.
\end{definition}
In the concrete setting of Gabor spaces, a direct calculation shows for functions $f_1,f_2,g_1,g_2\in L^2(\mathbb{R}^{2d})$, that $V_{g_1} f_1 \natural V_{g_2} f_2 = \langle f_2, g_1 \rangle V_{g_2} f_1$. Clearly then for some $F\in V_g(L^2)$, the identity $F\natural V_g g = F$ holds when $\|g\|_{L^2}=1$, however a fundamental result of coorbit theory is that the converse also holds, giving the following:
\begin{proposition} \label{rkhsclassification}
    Given some $g\in L^2(\mathbb{R}^d)$ with $\|g\|_{L^2}=1$, a function $F\in L^2(\mathbb{R}^{2d})$ is in $V_g (L^2)$ if and only if $F\natural V_g g = F$. 
\end{proposition}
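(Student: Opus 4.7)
Throughout I assume $\|g\|_{L^2}=1$, matching the normalisation used for Gabor spaces in the preceding example; without this the identity in the proposition acquires a factor of $\|g\|^2$. The overall strategy is to identify the map $F \mapsto F \natural V_g g$ with the orthogonal projection $V_g V_g^*$ onto $V_g(L^2)$, after which both implications become immediate: the Gabor space is precisely the fixed-point set of this projection.

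The forward direction is a one-line consequence of the twisted convolution identity $V_{g_1} f_1 \natural V_{g_2} f_2 = \langle f_2, g_1\rangle V_{g_2} f_1$ stated just before the proposition. If $F = V_g f$ for some $f \in L^2(\mathbb{R}^d)$, then $F \natural V_g g = V_g f \natural V_g g = \langle g, g\rangle V_g f = V_g f = F$.

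For the converse, the aim is to prove the identity $V_g V_g^* F = F \natural V_g g$ for every $F \in L^2(\mathbb{R}^{2d})$. Since $V_g$ is an isometry, $V_g^*$ is its bounded adjoint and $f := V_g^* F$ lies in $L^2(\mathbb{R}^d)$. Using the weak Bochner representation $V_g^* F = \int F(z')\pi(z')g\, dz'$ and pairing against $\pi(z)g$ gives
\begin{align*}
    V_g f(z) = \int_{\mathbb{R}^{2d}} F(z')\, \langle \pi(z') g, \pi(z) g\rangle\, dz'.
\end{align*}
The cocycle identity $\pi(z)^*\pi(z') = c(z,z')\pi(z-z')^*$ then yields $\langle \pi(z')g, \pi(z)g\rangle = c(z,z')\, V_g g(z - z')$, which is exactly the integrand appearing in $F \natural V_g g(z)$ under the twisted convolution convention above. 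Hence $V_g V_g^* F = F \natural V_g g$, and the hypothesis $F \natural V_g g = F$ forces $F = V_g(V_g^* F) \in V_g(L^2)$.

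The main obstacle is careful bookkeeping of the cocycle and the conjugation in the inner product, so that the sign inside $c(z,z')$ and the argument order of $V_g g$ line up with the twisted convolution convention; the remaining points—interchange of inner product and Bochner integral, boundedness of $V_g^*$—are routine.
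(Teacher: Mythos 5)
Your proof is correct and takes essentially the same approach as the paper: the paper states this scalar proposition without proof (citing it as a standard coorbit result), but its proof of the operator analogue in the subsection on characterisation via twisted convolution is exactly your argument — the forward direction from the identity $V_{g_1}f_1 \natural V_{g_2}f_2 = \langle f_2,g_1\rangle V_{g_2}f_1$, and the converse by identifying $F \mapsto F \natural V_g g$ with the projection $V_g V_g^*$ through the cocycle computation $\langle \pi(z')g,\pi(z)g\rangle = c(z,z')V_g g(z-z')$. Your observation that the statement implicitly requires $\|g\|_{L^2}=1$ (otherwise a factor $\|g\|_{L^2}^2$ appears) is a correct clarification of a normalisation the paper leaves tacit.
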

We note an application of weighted, mixed-norm Young's inequality to Lebesgue-Bochner spaces of Banach algebras to be used in the sequel. A proof of the scalar valued case can be found for example in Proposition 11.1.3 of \cite{grochenigtfa}, the vector valued case follows by the same argument. 
\begin{lemma} \label{youngsineq}
Given functions $F\in L^1_v(\mathbb{R}^{2d};\mathcal{HS})$ and $H\in L^{p,q}_m(\mathbb{R}^{2d};\mathcal{HS})$ we have
\begin{align*}
     \|F\natural H \|_{L^{p,q}_m} \leq C_{m,v}\|F\|_{L^1_v} \|H\|_{L^{p,q}_m},
\end{align*}
where $v$ is some sub-multiplicative function and $m$ a $v$-moderate weight, and $C_{m,v}$ a constant depending on $v$ and $m$.
\end{lemma}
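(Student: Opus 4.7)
The plan is to reduce the vector-valued weighted mixed-norm inequality for twisted convolution to the scalar-valued version cited from Proposition 11.1.3 of \cite{grochenigtfa}, using two observations: the cocycle is unimodular, and $v$-moderation converts the weight on the product into a product of weights on the factors. The key asymmetry between the ordinary and twisted convolutions is only the phase factor $c(x,x')$, which plays no role at the level of norms.

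First, I would pass to pointwise norm estimates. Since the Bochner integral satisfies $\|\int_G f\, d\mu\|_B \leq \int_G \|f\|_B\, d\mu$, and since $|c(x,x')|=1$ for the cocycle associated to $\pi(z)$, the definition of $\natural$ immediately gives the pointwise estimate
\begin{align*}
\|F\natural H(x)\|_B \leq \int_G \|F(x')\|_B\, \|H(x-x')\|_B\, d\mu(x').
\end{align*}
This reduces the problem to an estimate between the scalar functions $x\mapsto \|F(x)\|_B$ and $x\mapsto \|H(x)\|_B$, which by the hypotheses lie respectively in $L^1_v(G)$ and $L^{p,q}_m(G)$ with the same norms as $F$ and $H$ in their Bochner-Lebesgue spaces.

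Next, I would incorporate the weight. Writing $x = x' + (x-x')$ and using $v$-moderation,
\begin{align*}
m(x) \leq v(x')\, m(x-x'),
\end{align*}
so multiplying the pointwise estimate by $m(x)$ yields
\begin{align*}
m(x)\|F\natural H(x)\|_B \leq \int_G \bigl(v(x')\|F(x')\|_B\bigr)\,\bigl(m(x-x')\|H(x-x')\|_B\bigr)\, d\mu(x').
\end{align*}
The right-hand side is the ordinary convolution of the two non-negative scalar functions $v\cdot\|F\|_B$ and $m\cdot \|H\|_B$ on $G$. Applying the scalar, unweighted mixed-norm Young's inequality from Proposition 11.1.3 of \cite{grochenigtfa} to these non-negative scalars gives
\begin{align*}
\|m \cdot \|F\natural H\|_B\|_{L^{p,q}} \leq \|v\cdot \|F\|_B\|_{L^1}\, \|m\cdot \|H\|_B\|_{L^{p,q}},
\end{align*}
which is exactly the desired inequality with $C_{m,v}$ absorbing the constant from the $v$-moderation (which in the pure inequality above is $1$, but becomes a nontrivial constant if one instead works with the symmetric moderation $m(x)\leq C_{m,v} v(x')m(x-x')$ required when $m$ is only equivalent to a moderate weight).

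The argument contains no real obstacle; the only subtlety is bookkeeping the constant $C_{m,v}$ and verifying that the Bochner-integral version of Minkowski's inequality (giving the pointwise norm bound on $\|F\natural H(x)\|_B$) and measurability of $x\mapsto \|F(x)\|_B$, $x\mapsto \|H(x)\|_B$ hold in the Bochner setting, both of which are standard for strongly measurable $B$-valued functions. Thus the vector-valued case follows from the scalar case essentially verbatim, as the remark following the statement already anticipates.
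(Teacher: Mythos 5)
Your proof is correct and follows essentially the same route as the paper, which gives no detailed argument but asserts that the scalar weighted mixed-norm Young inequality (Proposition 11.1.3 of \cite{grochenigtfa}) extends by the same argument: you simply make that reduction explicit via the Bochner--Minkowski inequality, unimodularity of the cocycle $|c(x,x')|=1$, and the weight splitting $m(x)\leq v(x')m(x-x')$. The only step you leave tacit is the submultiplicativity $\|H(x-x')F(x')\|_B\leq\|H(x-x')\|_B\,\|F(x')\|_B$ of the Banach algebra norm, which your pointwise estimate needs alongside Minkowski, but this is immediate from the definition of a Banach algebra.
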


\subsection{Modulation Spaces}
We begin by considering the space $M^1_v(\mathbb{R}^{d})$. For a sub-multiplicative $v$, we define the modulation space as functions whose image under the STFT with Gaussian window is in $L^1_v(\mathbb{R}^{2d})$;
\begin{align*}
    M^1_v(\mathbb{R}^{d}) := \{f\in L^2(\mathbb{R}^d): V_{\varphi_0} f \in L^1_v(\mathbb{R}^{2d})\}.
\end{align*}
 Such a space is always non-empty, as $\varphi_0$ itself is contained in it, and for weights of polynomial growth it contains the Schwartz functions $\mathscr{S}$. In addition, it is closed under pointwise multiplication, time-frequency shifts, and is a Banach space under the norm $\|f\|_{M^1_v} = \|V_{\varphi_0} f\|_{L^1_v}$. The unweighted $M^1(\mathbb{R}^{d})$ is Feichtinger's algebra, which has been studied extensively and provides for many avenues of time-frequency analysis the ideal set of test functions. We refer to the early paper \cite{feich81} and recent survey \cite{jakob18} for more details on the space. General modulation spaces are then defined, for any $v$-moderate weight $m$, by
\begin{align*}
    M^{p,q}_m(\mathbb{R}^{d}) := \{f\in (M^1_v(\mathbb{R}^{d}))': V_{\varphi_0} f \in L^{p,q}_m(\mathbb{R}^{2d})\},
\end{align*}
with the associated norm $\|f\|_{M^{p,q}_m}=\|f\|_{L^{p,q}_m}$
For any $g\in M^1_v(\mathbb{R}^{d})$, the space $\{f\in (M^1_v(\mathbb{R}^{d}))': V_{g} f \in L^{p,q}_m(\mathbb{R}^{2d})\}$ is equal to the space $M^{p,q}_m(\mathbb{R}^{d})$ and the associated norms are equivalent. It is not hard to see that $M^{2,2}(\mathbb{R}^{d})=L^2(\mathbb{R}^{d})$, from the properties of the STFT with window $\varphi_0 \in L^2(\mathbb{R}^{2d})$. The modulation spaces form coorbit spaces of the unitary representation $\pi$, and as such have the property:
\begin{theorem}{(Correspondence Principle)}
Given some $g\in M^1_v(\mathbb{R}^{d})$, for every $1\leq p \leq \infty$, the STFT defines an isomorphism
\begin{align*}
    V_g: M^{p,q}_v(\mathbb{R}^{d}) \to \{F\in L^{p,q}_v(\mathbb{R}^{2d}): F \natural  V_g g = F\}.
\end{align*}
\end{theorem}
\begin{remark}
\normalfont In this work we consider the case of the operator STFT $\mathfrak{V}_S$. One might therefore ask why we do not refer simply to an operator modulation space. We believe this would be misleading, since the term modulation space refers to the construction of the spaces by the $M^1(\mathbb{R}^{d})$ condition $\int_{\hat{G}} \|M_\omega f * f\|_1 d\omega < \infty$. We do not work with the analogous concept of modulation for operators, so we choose to refer to them as coorbit spaces. Although not coorbit spaces in the "strict sense" \cite{voigt15}, since the elements are not in the representation space of $\pi$, they can be considered as generalised coorbit spaces in the sense that instead of our transforms being a functional, they are now simply maps between spaces of operators, for example $\mathcal{HS}\to\mathcal{HS}$. 
\end{remark}

\subsection{Spaces of Operators}
\subsubsection{Schatten class and nuclear operators}
In this work we consider several spaces of operators. We begin by defining the \textit{trace class} operators as
\begin{align*}
    \mathcal{S}^1:= \{T\in\mathcal{L}(L^2(\mathbb{R}^{d})): \sum_{n\in \mathbb{N}} \langle |T|e_n, e_n\rangle < \infty\}
\end{align*}
for any orthonormal basis $\{e_n\}_{n\in\mathbb{N}}$ of $L^2(\mathbb{R}^{d})$. For operators satisfying this condition, the sum $\sum_{n\in \mathbb{N}} \langle Te_n, e_n\rangle$ in fact converges for all orthonormal bases to the same value, the trace of $T$, given by tr$(T)$. The set of such operators is then a Banach space when equipped with the norm $\|T\|_{\mathcal{S}^1} = tr(|T|)$. The \textit{Hilbert-Schmidt} operators $\mathcal{HS}$, are the operators
\begin{align*}
    \mathcal{HS}:= \{T\in\mathcal{L}(L^2(\mathbb{R}^{d})): T^*T\in\mathcal{S}^1 \}.
\end{align*}
The space $\mathcal{HS}$ is a Hilbert-Schmidt space with the inner product $\langle S,T\rangle_{\mathcal{HS}} = \mathrm{tr}(ST^*)$, and contains $\mathcal{S}^1$ as a proper ideal. We will often use that every compact operator, and therefore every Hilbert-Schmidt and trace class operator, admits a spectral decomposition
\begin{align*}
    S = \sum_{n\in\mathbb{N}} \lambda_n \psi_n \otimes \phi_n,
\end{align*}
where $\lambda_n$ are the singular values of $S$, $\{\psi_n\}_{n\in \mathbb{N}}$ and $\{\phi_n\}_{n\in \mathbb{N}}$ are orthonormal sets and the sum converges in operator norm. Both these spaces are Banach algebras with their respective norms and two-sided ideals in $\mathcal{L}(L^2(\mathbb{R}^{d}))$, with $\mathcal{S}^1\subset\mathcal{HS}\subset \mathcal{L}(L^2(\mathbb{R}^{d}))$. The further \textit{Schatten class operators}, $\mathcal{S}^p$, are defined by the decay of their singular values;
\begin{align*}
    \mathcal{S}^p := \{T\in\mathcal{L}(L^2(\mathbb{R}^{d})): \{\lambda_n\}_{n\in\mathbb{N}} \in l^p \}
\end{align*}
where $\lambda_n$ are again the singular values of $T$. Clearly $\mathcal{HS}=\mathcal{S}^2$. We also 
introduce a space of \textit{nuclear operators}, a 
concept which generalises the concept of trace to 
operators between Banach spaces. In particular for 
two Banach spaces $X,Y$, the nuclear operators $\mathcal{N}(X,Y)$ are the linear operators 
$T$ which have an expansion 
\todo{M: $x_n$}
$T=\sum_n y_n\otimes x_n$, where $y_n\in Y$, $x_n\in X'$ such that $\sum_n \|y_n\|_Y\|x_n\|_{X'} < \infty$. These operators become a Banach space when endowed with the norm $\|T\|_{\mathcal{N}(X,Y)} = \inf \sum_n \|y_n\|_Y\|x_n\|_{X'}$ where the infimum is taken over all possible decompositions of $T$. In our case we are interested in the nuclear operators $\mathcal{N}(L^2(\mathbb{R}^{d});M^1_v(\mathbb{R}^{d}))$. Such operators may be defined as the projective tensor product $\mathcal{N}(L^2(\mathbb{R}^{d});M^1_v(\mathbb{R}^{d})) := M^1_v(\mathbb{R}^{d}) \Hat{\otimes}_{\pi} L^2(\mathbb{R}^{d})$, the completion of the algebraic tensor product $M^1_v(\mathbb{R}^{d}) \otimes L^2(\mathbb{R}^{d})$ with respect to the nuclear norm
\begin{align*}
    \|h\|_{M^1_v \otimes L^2} = \inf \left\{ \sum_{n=1}^N \|g_n\|_{M^1_v}\|f_n\|_{L^2}: h=\sum_{n=1}^N g_n \otimes f_n \right\}.
\end{align*}
Finally we introduce the \textit{Schwartz operators} $\mathfrak{S}$, as the space of bounded integral operators with kernel $k\in \mathscr{S}(\mathbb{R}^{2d})$. Such operators form a Frechet space as detailed in \cite{keyl16}, and the topological dual $\mathfrak{S}'$ consists of integral operators with kernels in $\mathscr{S}'(\mathbb{R}^{2d})$, which by the Schwartz kernel theorem is the space of operators from $\mathscr{S}(\mathbb{R}^{2d})$ to $\mathscr{S}'(\mathbb{R}^{2d})$. For polynomial sub-multiplicative weight $v$, we use the sequence of inclusions $\mathfrak{S}\subset \mathcal{N}(L^2(\mathbb{R}^{d});M^1_v(\mathbb{R}^{d})) \subset \mathcal{HS} \subset \mathfrak{S}'$.

\subsubsection{G-frames for Operators}
In the operator setting, we will consider g-frames as introduced in \cite{sun06} as an analogue to frames in the function setting. In particular, given a Hilbert space $\mathcal{U}$, and a sequence of Hilbert spaces $\{\mathcal{V}_i\}_{i\in I}$, then a sequence of operators $\{S_i\in \mathcal{L}(\mathcal{U};\mathcal{V}_i)\}_{i\in I}$ is called a g-frame of $\mathcal{U}$ with respect to $\{\mathcal{V}_i\}_{i\in I}$ if there exists positive constants $A,B$ such that the \textit{g-frame condition}
\begin{align}
    A\|u\|^2_{\mathcal{U}} \leq \sum_{i\in I} \|S_i u\|^2_{\mathcal{V}_i} \leq B\|u\|^2_{\mathcal{U}}
\end{align}
holds for all $u\in\mathcal{U}$. We call $\{S_i\}_{i\in I}$ a tight frame when $A=B$, and a Parseval frame when $A=B=1$. In our work we consider the case where $\mathcal{V}_i$ coincide for all $i$. When the g-frame condition holds, the g-frame operator
\begin{align*}
    \mathfrak{O}_S = \sum_{i\in I} S_i^* S_i
\end{align*}
is positive, bounded and invertible on $\mathcal{U}$. In \cite{skrett21}, g-frame operators of the type 
\begin{align*}
    \mathfrak{O}_S = \sum_{\lambda\in\Lambda} \pi(\lambda)S^*S\pi(\lambda)^*
\end{align*}
for some lattice $\Lambda$, were considered on the Hilbert space $L^2(\mathbb{R}^d)$. In this work, we say an operator $S\in\mathcal{L}(L^2(\mathbb{R}^{d}))$ generates a \textit{Gabor g-frame} if $\{S^*\pi(\lambda)^*\}_{\lambda\in\Lambda}$ is a frame for $\mathcal{HS}$. 
\begin{proposition}
If $S\in\mathcal{L}(L^2(\mathbb{R}^{d}))$ generates a Gabor g-frame of $\mathcal{HS}$ for some lattice $\Lambda$, then $S\in\mathcal{HS}$.
\end{proposition}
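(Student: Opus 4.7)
The plan is to use only the upper g-frame bound, reduce it to a Bessel inequality for $L^2$ by testing on rank-one operators, and then periodise to invoke Moyal's identity. First I would apply the upper bound $\sum_{\lambda \in \Lambda} \|S^*\pi(\lambda)^* T\|_{\mathcal{HS}}^2 \le B \|T\|_{\mathcal{HS}}^2$ to operators of the form $T = f \otimes g$, with $g \in L^2$ a fixed unit vector. Since $S^*\pi(\lambda)^*(f \otimes g) = (S^*\pi(\lambda)^* f) \otimes g$ and $\|u \otimes v\|_{\mathcal{HS}} = \|u\|\,\|v\|$, this specialises to the Bessel estimate
\[
\sum_{\lambda \in \Lambda} \|S^*\pi(\lambda)^* f\|_{L^2}^2 \le B\,\|f\|_{L^2}^2 \qquad \text{for all } f \in L^2(\mathbb{R}^d).
\]

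Next I would periodise. Replacing $f$ by $\pi(\mu) f$ above and integrating in $\mu$ over a fundamental domain $F$ of $\Lambda$, the cocycle relation $\pi(\lambda)^*\pi(\mu) = c(\lambda,\mu)\,\pi(\lambda-\mu)^*$ with $|c(\lambda,\mu)| = 1$, together with the tiling property of the translates $\{\lambda - F\}_{\lambda \in \Lambda}$, collapse the sum-integral into a single integral over phase space; combining with $\|\pi(z)^* f\| = \|\pi(-z) f\|$ and a reflection of variables yields
\[
\int_{\mathbb{R}^{2d}} \|S^*\pi(z) f\|_{L^2}^2\,dz \le |F|\, B\, \|f\|_{L^2}^2.
\]

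Finally, expanding $\|S^*\pi(z) f\|^2 = \sum_n |\langle \pi(z) f, S e_n\rangle|^2 = \sum_n |V_f(S e_n)(z)|^2$ along an orthonormal basis $\{e_n\}$ of $L^2$, swapping sum and integral by Tonelli, and applying Moyal's identity termwise (which remains valid as an equality of possibly infinite quantities, since each $Se_n \in L^2$) gives
\[
\int_{\mathbb{R}^{2d}} \|S^*\pi(z) f\|^2\,dz = \|f\|^2 \sum_n \|S e_n\|^2 = \|f\|^2\, \|S\|_{\mathcal{HS}}^2.
\]
Combining with the previous display and cancelling $\|f\|^2$ for any nonzero $f$ gives $\|S\|_{\mathcal{HS}}^2 \le |F|\, B < \infty$, so $S \in \mathcal{HS}$. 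The main subtlety I expect lies in step two, where the periodisation and the unimodular cocycle factors must be tracked carefully; once the integral is in this closed form, Moyal's identity closes the argument in essentially one line, and the lower g-frame bound is not used at all.
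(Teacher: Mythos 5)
Your proof is correct and is essentially the argument the paper invokes: the paper defers to Proposition 5.7 of the cited work of Skrettingland, which is exactly this periodisation-plus-Moyal computation, applied there to finite-rank projections $\sum_{n=1}^N \varphi_n\otimes\varphi_n$ rather than to your rank-one operators $f\otimes g$. Your reduction to the Bessel bound $\sum_\lambda\|S^*\pi(\lambda)^*f\|^2\le B\|f\|^2$, followed by averaging over a fundamental domain and applying Moyal's identity termwise as an identity in $[0,\infty]$, is sound and has the merit of being self-contained.
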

This follows from the same argument as \todo{M: should be capitalized?}Proposition 5.7 in \cite{skrett21}, taking a rank-one $T$.

For $S\in\mathcal{HS}$ which generates a Gabor g-frame, we define the analysis operator $C_S: \mathcal{HS} \to l^2(\Lambda;\mathcal{HS})$ by
\begin{align*}
    C_S T = \{S^*\pi(\lambda)^*T\}_{\lambda\in\Lambda},
\end{align*}
and the synthesis operator $D_S: l^2(\Lambda;\mathcal{HS}) \to \mathcal{HS}$ by
\begin{align*}
    D_S (\{T_{\lambda}\}_{\lambda\in\Lambda}) = \sum_{\lambda\in\Lambda} \pi(\lambda)S T_{\lambda}.
\end{align*}
If $S$ generates a Gabor g-frame, then general g-frame theory \cite{sun06} tells us there exists a \textit{canonical dual frame} 
\begin{align*}
    \{\Tilde{S}_{\lambda}\}_{\lambda\in\Lambda} := \{S^*\pi(\lambda)^*\mathfrak{O}_S^{-1}\}_{\lambda\in\Lambda}
\end{align*}
since $T=\mathfrak{O}_S^{-1}\mathfrak{O}_S T = \mathfrak{O}_S\mathfrak{O}_S^{-1} T$. $\Tilde{S}$ can be shown to be a Gabor g-frame generated by $(S^*\mathfrak{O}_S^{-1})$. We say in general that two operators $S,T\in\mathcal{L}(\mathcal{HS})$ generate \textit{dual Gabor g-frames} if $S$ and $T$ generate Gabor g-frames, and $\mathfrak{O}_{S,T} := D_S C_T = I_{\mathcal{HS}}$.

\subsection{Quantum Harmonic Analysis}
As a final prerequisite we present some theorems of quantum harmonic analysis, based on the convolutions introduced by Werner in \cite{werner84}, and recently applied to time-frequency anlysis in \cite{skrett18} \cite{skrett19} \cite{skrett20}, where it is used to generalise known results and provide more concise proofs by extending the mechanics of harmonic analysis to operators. We will on occasion use the framework of quantum harmonic analysis to simplify a proof or give an alternative framing. Convolutions between operators and functions are defined in the following manner;
\begin{definition}
For $f\in L^p(\mathbb{R}^{2d})$, $S\in\mathcal{S}^q$ and $T\in\mathcal{S}^p$, where $\frac{1}{p}+\frac{1}{q}=1+\frac{1}{r}$, convolutions are defined by
\begin{align*}
    f \star S &:= \int_{\mathbb{R}^{2d}} f(z) \alpha_z(S)\, dz \\
    S \star T &:= \mathrm{tr}(S\alpha_z(\Check{T}))
\end{align*}
where $\alpha_z(S)=\pi(z)S\pi(z)^*$ is a representation of the Weyl-Heisenberg group on $\mathcal{HS}$ and $\Check{T}=PTP$ where $P$ is the parity operator. The first integral is to be interpreted as a Bochner integral. 
\end{definition}
We will use a generalised version of Moyal's identity from \cite{werner84};
\begin{lemma}{(Generalised Moyal's Identity)}\label{generalmoyal}
For two operators $S,T\in\mathcal{S}^1$, the mapping $z\mapsto S\alpha_z(T)$ is integrable over $\mathbb{R}^{2d}$, and
\begin{align*}
    \int_{\mathbb{R}^{2d}} S\alpha_z(T)\, dz = \mathrm{tr}(S)\mathrm{tr}(T).
\end{align*}
\end{lemma}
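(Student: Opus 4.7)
The plan is to reduce this operator identity to the scalar Moyal identity stated earlier in the paper (for functions) by diagonalising the two trace class operators. First I would invoke the singular value / spectral decompositions available for compact operators to write
\begin{align*}
    S = \sum_{i} \lambda_i\, \psi_i \otimes \phi_i, \qquad T = \sum_{j} \mu_j\, \eta_j \otimes \xi_j,
\end{align*}
with $\{\psi_i\}, \{\phi_i\}, \{\eta_j\}, \{\xi_j\}$ orthonormal systems and $\{\lambda_i\}, \{\mu_j\} \in \ell^1$ (since $S,T\in\mathcal{S}^1$). Applying the definition of $\alpha_z$, one immediately gets $\alpha_z(T) = \sum_j \mu_j (\pi(z)\eta_j)\otimes(\pi(z)\xi_j)$, and using the identity $(u\otimes v)(a\otimes b) = \langle a,v\rangle\, u\otimes b$ together with the formula $\mathrm{tr}(u\otimes w) = \langle u,w\rangle$, a direct computation produces
\begin{align*}
    \mathrm{tr}\bigl(S \alpha_z(T)\bigr) = \sum_{i,j} \lambda_i \mu_j\, \overline{V_{\eta_j}\phi_i(z)}\, V_{\xi_j}\psi_i(z),
\end{align*}
where $V_g f(z) = \langle f, \pi(z) g\rangle$ is the scalar STFT.

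Second, I would apply the scalar Moyal identity stated earlier term by term: for each $(i,j)$,
\begin{align*}
    \int_{\mathbb{R}^{2d}} \overline{V_{\eta_j}\phi_i(z)}\, V_{\xi_j}\psi_i(z)\, dz
    = \langle \psi_i,\phi_i\rangle\, \overline{\langle \xi_j,\eta_j\rangle}
    = \langle \psi_i,\phi_i\rangle\, \langle \eta_j,\xi_j\rangle.
\end{align*}
Summing against $\lambda_i \mu_j$ factors the result into $\bigl(\sum_i \lambda_i \langle \psi_i,\phi_i\rangle\bigr)\bigl(\sum_j \mu_j \langle \eta_j,\xi_j\rangle\bigr) = \mathrm{tr}(S)\,\mathrm{tr}(T)$, which is exactly the claimed identity.

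The main obstacle is to justify the interchange of the double sum and the integral, which is the source of the integrability claim. For this I would use a Fubini/Tonelli argument based on the Cauchy--Schwarz estimate
\begin{align*}
    \int_{\mathbb{R}^{2d}} \bigl|\overline{V_{\eta_j}\phi_i(z)}\, V_{\xi_j}\psi_i(z)\bigr|\, dz
    \le \|V_{\eta_j}\phi_i\|_{L^2}\,\|V_{\xi_j}\psi_i\|_{L^2}
    = \|\phi_i\|\|\eta_j\|\|\psi_i\|\|\xi_j\| = 1,
\end{align*}
obtained from ordinary Moyal's identity applied to each STFT individually. Combined with $\sum_{i,j} |\lambda_i||\mu_j| = \|S\|_{\mathcal{S}^1}\|T\|_{\mathcal{S}^1} < \infty$, this yields absolute integrability of the double series, validates Fubini, and in passing establishes that $z \mapsto S\alpha_z(T)$ is Bochner integrable with the claimed value. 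This last point is the only subtlety worth elaborating, since everything else reduces to moving the scalar result inside the spectral sums.
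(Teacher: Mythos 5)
Your argument is correct in substance, but it is worth noting that the paper does not prove this lemma at all: it is imported verbatim from Werner's paper \cite{werner84}, with only the remark that taking rank-one operators recovers the classical Moyal identity. What you have done is run that remark in reverse --- decompose $S$ and $T$ into their singular value expansions, reduce to the rank-one case, apply scalar Moyal termwise, and control the interchange of sum and integral by the Cauchy--Schwarz bound $\int |\overline{V_{\eta_j}\phi_i}\,V_{\xi_j}\psi_i|\,dz \le 1$ together with $\sum_{i,j}|\lambda_i||\mu_j| = \|S\|_{\mathcal{S}^1}\|T\|_{\mathcal{S}^1}$. This is the standard self-contained derivation in the quantum harmonic analysis literature, and all the individual computations ($(u\otimes v)(a\otimes b) = \langle a,v\rangle\,u\otimes b$, $\mathrm{tr}(u\otimes w)=\langle u,w\rangle$, $\mathrm{tr}(S)=\sum_i\lambda_i\langle\psi_i,\phi_i\rangle$) check out. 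Your Tonelli estimate in fact proves the genuinely nontrivial part of the statement, namely that $z\mapsto \mathrm{tr}(S\alpha_z(T))$ lies in $L^1(\mathbb{R}^{2d})$ and not merely that the integral can be assigned a value, which is exactly the form in which the lemma is used later (in the proof of Proposition 3.4 the integrand already sits under a trace).

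The one overreach is your closing claim that the estimates ``in passing'' establish Bochner integrability of the operator-valued map $z\mapsto S\alpha_z(T)$. They do not: Bochner integrability in $\mathcal{S}^1$ would require $\int \|S\alpha_z(T)\|_{\mathcal{S}^1}\,dz<\infty$, and already for rank-one $S=\psi\otimes\phi$, $T=\eta\otimes\xi$ this norm equals $|V_\eta\phi(z)|\,\|\psi\|\,\|\xi\|$, which is integrable only when $V_\eta\phi\in L^1(\mathbb{R}^{2d})$ --- false for generic $\phi,\eta\in L^2$. The lemma as printed is ambiguous (it equates an operator-valued integral with a scalar), but the only reading consistent with both Werner's original statement and the paper's subsequent use is the trace version, $\int_{\mathbb{R}^{2d}}\mathrm{tr}(S\alpha_z(T))\,dz=\mathrm{tr}(S)\,\mathrm{tr}(T)$, and that is precisely what your argument proves. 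You should simply state your conclusion in that form and drop the Bochner remark.
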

Taking rank one operators returns precisely the original Moyal's identity, hence the name. We also note that the above holds when $T$ is replaced with $\Check{T}$, since tr$(T)=$tr$(PTP)$. We also make frequent use of the fact that for $S\in\mathcal{S}^1$;
\begin{align} \label{convidentity}
    1 \star S = \mathrm{tr}(S)I_{L^2},
\end{align}
which can be seen by using the spectral decomposition of $S$ and the reconstruction formula for $V_g$.

\section{An Operator STFT}
We start by defining the operator valued STFT. 
\begin{definition}{(Operator STFT)}
For two $\mathcal{HS}$ operators $S,T$ on $L^2(\mathbb{R}^d)$, the operator short-time Fourier transform, $\mathfrak{V}_S T$, is given by
\begin{equation}
    \mathfrak{V}_S T(z) = S^*\pi(z)^*T.
\end{equation}
\end{definition}
The operator STFT thus defines an operator valued function in phase space. We will see that this operator valued function is in many respects an analogue to the scalar function of the function STFT. To motivate such a definition, we consider the following:
\begin{example}
\normalfont For operators $S=\sum_n g_n \otimes e_n $ and $T=\sum_n f_n \otimes e_n$ with $f_n, g_n \in L^2(\mathbb{R}^d)$ and $\{e_n\}_n$ some orthonormal basis in $L^2(\mathbb{R}^d)$;
\begin{align*}
    \mathfrak{V}_S T(z) &= \sum_{n,m} V_{g_n}f_m(z) e_n \otimes e_m.
\end{align*}
\end{example}
\begin{remark}
\normalfont
    Here and in the sequel, we will often consider an operator $S=\sum_n f_n \otimes e_n$ where only the $e_n$ are assumed to be orthonormal. This is done because we will later consider different norms on the $f_n$. In the above example we could of course assume the $f_n$ to have $\|f_n\|_{L^2} = s_n$, where $s_n$ are the singular values of $S$.
\end{remark}
This definition is clearly equivalent to the definition in \cite{skrett22}, \cite{Guo22} in the case of a rank one $T=\psi \otimes \xi$, where we have $\mathfrak{V}_S T = (S^*\pi(z)^*\psi) \otimes \xi$, except that we consider the adjoint $S^*$. This adjustment is to make formulae in the sequel cleaner, and we note that there is no material difference in the two formulations. The STFT can thus be considered to encode information about time frequency correlations over functions. 
\begin{example}\label{locopex}
\normalfont Let $A_f^{\varphi_1,\varphi_2}$ and $A_g^{\psi_1,\psi_2}$ be standard single window localisation operators given by
\begin{align*}
    A_f^{\varphi_1,\varphi_2}&=\int_{\mathbb{R}^{2d}} f(z) \pi(z)\varphi_1\otimes\pi(z)\varphi_2\, dz \\
    A_g^{\psi_1,\psi_2}&=\int_{\mathbb{R}^{2d}} g(z) \pi(z)\psi_1\otimes\pi(z)\psi_2\, dz
\end{align*}
where $f,g\in L^2(\mathbb{R}^{2d})$ and $\varphi_i,\psi_i\in L^2(\mathbb{R}^d)$.
The operator STFT of $A_g^{\psi_1,\psi_2}$ with window $A_f^{\varphi_1,\varphi_2}$ is then
\begin{align*}
    \mathfrak{V}_{A_f^{\varphi_1,\varphi_2}} A_g^{\psi_1,\psi_2}(z) = \int_{\mathbb{R}^{4d}} \overline{f(z')}g(z'')\langle \pi(z'')\psi_1,\pi(z)\pi(z')\varphi_1\rangle \pi(z')\varphi_2\otimes \pi(z'')\psi_2 \,dz' \,dz''.
\end{align*}
This expression has an intuitive interpretation; if windows $\psi_1$ and $\varphi_1$ are concentrated in time-frequency around the origin, then the inner product in the integrand is negligible outside of the region around $z=z''-z'$. To illustrate this we consider the simple situation where $f,g$ are characteristic functions, and all windows are the Gaussian $\varphi_0$:
\begin{align*}
    \mathfrak{V}_{A_{\Omega_1}} A_{\Omega_2}(z) = \int_{\Omega_2} \int_{\Omega_1} \langle \pi(z'')\varphi_0,\pi(z)\pi(z')\varphi_0\rangle \pi(z')\varphi_0\otimes \pi(z'')\varphi_0 \,dz' \,dz'',
\end{align*}
where $A_{\Omega_i}:= A_{\chi_{\Omega_i}}^{\varphi_0,\varphi_0}$\todo{M: indices?}. \cref{fig:masks} shows some simple domains $\Omega_i$ in the time-frequency plane. In \cref{fig:spectrograms}, the resulting Hilbert-Schmidt norm of the operator STFTs $\mathfrak{V}_{A_{\Omega_i}} A_{\Omega_j}$ are shown as a function of $z$:

\begin{figure}[h]
\includegraphics[width=\textwidth]{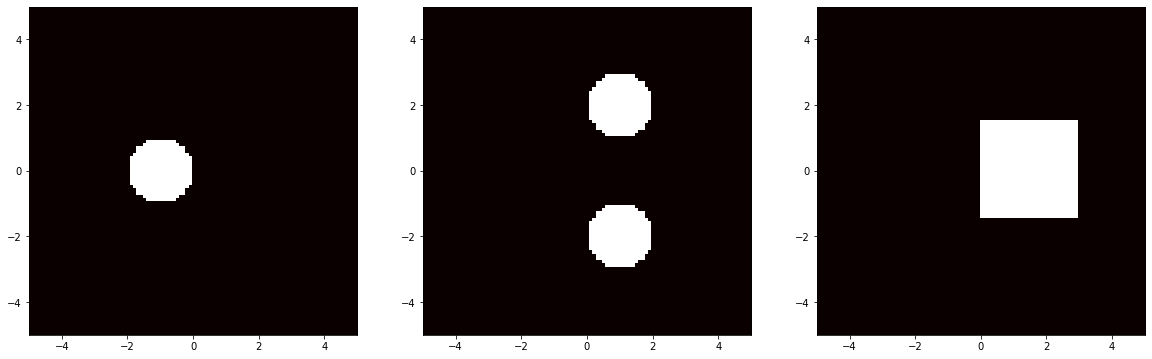}
\caption{From left to right: $\Omega_1$, $\Omega_2$, $\Omega_3$}
\label{fig:masks}
\end{figure}

\begin{figure}[h]
\includegraphics[width=1\textwidth]{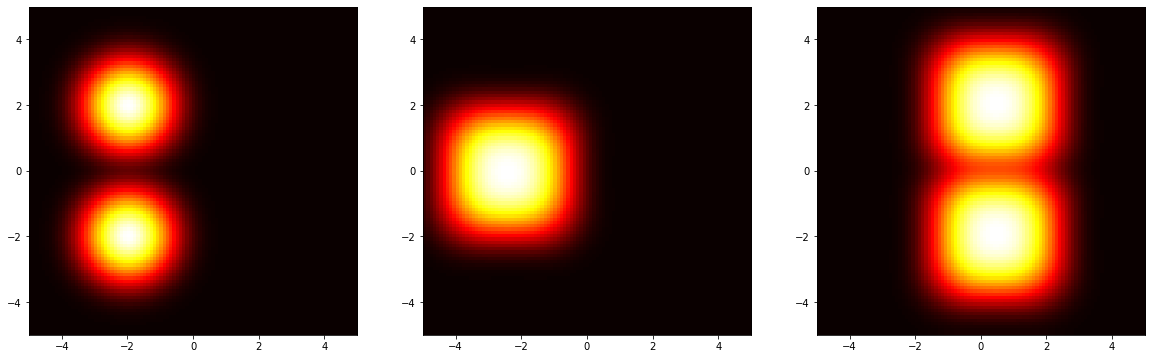}
\caption{From left to right: $\|\mathfrak{V}_{A_{\Omega_1}} A_{\Omega_2}(z)\|_{\mathcal{HS}}$, $\|\mathfrak{V}_{A_{\Omega_1}} A_{\Omega_3}(z)\|_{\mathcal{HS}}$, $\|\mathfrak{V}_{A_{\Omega_2}} A_{\Omega_3}(z)\|_{\mathcal{HS}}$}
\label{fig:spectrograms}
\end{figure}

These examples illustrates how the Hilbert-Schmidt norm of the operator STFT acts, but we also have the interpretation of $\pi(z')\varphi_0\otimes \pi(z'')\varphi_0$, as the operator sending time-frequency energy of a function from around $z''$ to around $z''-z$.

\end{example}

\begin{example}\label{dataopex}
\normalfont For a data operator $S=\sum_n f_n \otimes e_n$, 
\begin{align*}
    \mathfrak{V}_S S(z) = \sum_{n,m} V_{f_n} f_m(z) e_n\otimes e_m.
\end{align*}
Upon taking the taking the Hilbert-Schmidt norm, we recover the total correlation function from \cite{Dorf21};
\begin{align*}
    \|\mathfrak{V}_S S(z)\|_{\mathcal{HS}}^2 = \sum_{n,m} |V_{f_n} f_m(z)|^2.
\end{align*}
Hence the structure of the resulting operator can be seen to provide more information regarding the correlations within the dataset, as it relates where in the dataset the correlation occurs, for example on the diagonal versus off. To see this we compare two operators with identical total correlation functions, but one generated by functions determined by stationary process, while the other is generated by functions drawn from a non-stationary process. Both operators are of the form 
    \begin{align*}
        S_i = \sum_{n=1}^{200} f_i \otimes e_i
    \end{align*}
    where $e_i$ form an orthonormal basis, and $f_i$ are of the form
    \begin{align*}
        f_i = a_i \cdot sin(\mathrm{freq}_i\cdot t) \cdot g_i(t)
    \end{align*}
    where $a_i$ is a constant from a random normal distribution, $g_i$ a Bartlett-Hann window translated by a random $x$ from a normal distribution. The length of the signal is $200$, and so the resulting matrix $S_i$ has dimensions $200\times 200$. For the operator $S_1$, the frequencies $\mathrm{freq}_i$ are given by a base frequency with the addition of random noise from both a normal and sinoidal distribution. The operator $S_2$ on the other hand, is generated by the same base frequency, with random noise from a normal distribution, but with an $i$-dependent modulation. The two operators have an identical total correlation function, shown in \cref{fig:tc}, but comparing the operator-value of the STFT for different $z$'s shows the non-stationary structure of the function data set generating the operator $S_2$, \cref{fig:timedepstft}, when compared to $S_1$ \cref{fig:randstft}. In this respect the operator STFT can be seen to reflect the structure of an ordered data set, such as a functional time series.
    \begin{figure}
    \centering
        \includegraphics[width=.5\linewidth]{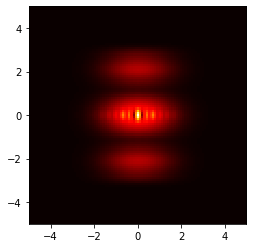}\hfill
        \caption{The common total correlation function of both $S_1$ and $S_2$}
        \label{fig:tc}
    \end{figure}
    \begin{figure}
        \includegraphics[width=.25\linewidth]{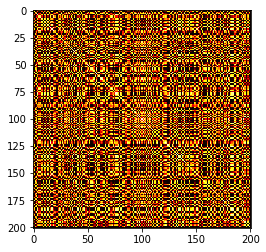}\hfill
        \includegraphics[width=.25\linewidth]{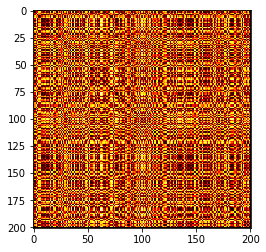}\hfill
        \includegraphics[width=.25\linewidth]{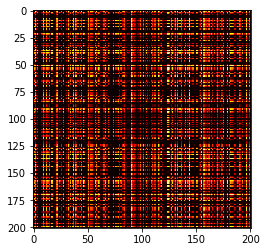}\hfill
        \includegraphics[width=.25\linewidth]{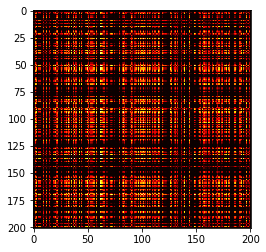}
      \caption{The matrix given by operator STFT $\mathfrak{V}_{S_1} S_1$ at $z=(0,0)$, $z=(0.5,0.5)$, $z=(1,1)$ and $z=(1.5,1.5)$.}
      \label{fig:randstft}
    \end{figure}
     \begin{figure}
        \includegraphics[width=.25\linewidth]{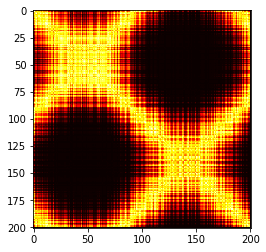}\hfill
        \includegraphics[width=.25\linewidth]{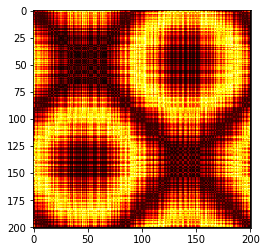}\hfill
        \includegraphics[width=.25\linewidth]{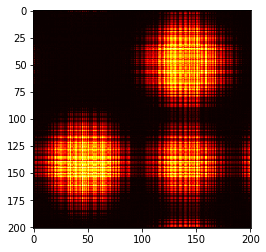}\hfill
        \includegraphics[width=.25\linewidth]{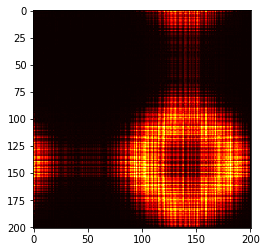}
      \caption{The matrix given by operator STFT $\mathfrak{V}_{S_2} S_2$ at $z=(0,0)$, $z=(0.5,0.5)$, $z=(1,1)$ and $z=(1.5,1.5)$.}
      \label{fig:timedepstft}
    \end{figure}
\end{example}
\todo{mention length of signal/size of matrix}

We collect some simple properties of the operator STFT:
\begin{proposition} \label{stftbasics}
    For operators $Q,R,S,T\in\mathcal{HS}$;
    \begin{enumerate}
        \item $\mathfrak{V}_S T(z) = e^{-2\pi i \omega x}(\mathfrak{V}_T S(-z))^*$
        \item $\int_{\mathbb{R}^{2d}} \langle \mathfrak{V}_S T(z), \mathfrak{V}_Q R(z)\rangle_{\mathcal{HS}}dz = \langle Q,S \rangle_{\mathcal{HS}} \langle T,R \rangle_{\mathcal{HS}}$
        \item $\int_{\mathbb{R}^{2d}} \| \mathfrak{V}_S T(z)\|^2_{\mathcal{HS}} = \|S\|^2_{\mathcal{HS}}\|T\|^2_{\mathcal{HS}}$
    \end{enumerate}
\end{proposition}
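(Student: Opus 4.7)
For part (1), the plan is a direct calculation using the cocycle identity $\pi(z)^* = e^{-2\pi i x\omega}\pi(-z)$ from the preliminaries. Substituting $-z = (-x,-\omega)$ gives $\pi(-z)^* = e^{-2\pi i x\omega}\pi(z)$, so $\mathfrak{V}_T S(-z) = T^*\pi(-z)^* S = e^{-2\pi i x\omega} T^*\pi(z) S$. Taking the adjoint yields $(\mathfrak{V}_T S(-z))^* = e^{2\pi i x\omega}S^*\pi(z)^* T$, and multiplying by the prefactor $e^{-2\pi i \omega x}$ recovers $S^*\pi(z)^*T = \mathfrak{V}_S T(z)$.

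For part (2), the plan is to reduce to the generalised Moyal identity (Lemma \ref{generalmoyal}). Using the definition $\langle A,B\rangle_{\mathcal{HS}} = \mathrm{tr}(AB^*)$ together with cyclicity of the trace,
\begin{align*}
\langle \mathfrak{V}_S T(z), \mathfrak{V}_Q R(z)\rangle_{\mathcal{HS}} = \mathrm{tr}\bigl(S^*\pi(z)^* T R^* \pi(z) Q\bigr) = \mathrm{tr}\bigl((QS^*)\,\pi(z)^*(TR^*)\pi(z)\bigr).
\end{align*}
The phase factors in $\pi(z)^* = e^{-2\pi i x\omega}\pi(-z)$ cancel between the two occurrences, so $\pi(z)^* B \pi(z) = \pi(-z) B \pi(-z)^* = \alpha_{-z}(B)$, and the integrand simplifies to $\mathrm{tr}\bigl(QS^*\,\alpha_{-z}(TR^*)\bigr)$. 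Since $Q,S,T,R\in\mathcal{HS}$, both $QS^*$ and $TR^*$ lie in $\mathcal{S}^1$, so Lemma \ref{generalmoyal} applies and, after the change of variables $z\mapsto -z$ which leaves Lebesgue measure invariant, yields
\begin{align*}
\int_{\mathbb{R}^{2d}} \mathrm{tr}\bigl(QS^*\,\alpha_{-z}(TR^*)\bigr)\, dz = \mathrm{tr}(QS^*)\mathrm{tr}(TR^*) = \langle Q,S\rangle_{\mathcal{HS}}\,\langle T,R\rangle_{\mathcal{HS}}.
\end{align*}

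Part (3) then drops out of part (2) by specialising $Q = S$ and $R = T$, since $\langle \mathfrak{V}_S T, \mathfrak{V}_S T\rangle_{\mathcal{HS}} = \|\mathfrak{V}_S T\|_{\mathcal{HS}}^2$ and $\langle S,S\rangle_{\mathcal{HS}}\langle T,T\rangle_{\mathcal{HS}} = \|S\|_{\mathcal{HS}}^2\|T\|_{\mathcal{HS}}^2$. The only step that requires any care is the bookkeeping in part (2), namely arranging the trace in the form $\mathrm{tr}(A\,\alpha_{-z}(B))$ with $A,B$ in the trace class so that the quantum Moyal identity is applicable; once this rearrangement is made, all three items are routine.
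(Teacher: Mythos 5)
Your proposal is correct and follows essentially the same route as the paper: part (1) by unwinding $\pi(z)^* = e^{-2\pi i x\omega}\pi(-z)$, part (2) by cycling the trace so that the generalised Moyal identity (\Cref{generalmoyal}) applies to the trace-class products $QS^*$ and $TR^*$, and part (3) as the specialisation $Q=S$, $R=T$. The only cosmetic difference is that you conjugate $TR^*$ by $\alpha_{-z}$ and change variables, whereas the paper cycles so as to conjugate $QS^*$ by $\alpha_z$ directly.
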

\begin{proof}
The first claim is merely a restatement of the property $\pi(z)^* = e^{-2\pi i x \omega}\pi(-z)$, and the third a special case of the second. To prove the second claim;
\begin{align*}
    \int_{\mathbb{R}^{2d}} \langle \mathfrak{V}_S T(z), \mathfrak{V}_Q R(z) \rangle_{\mathcal{HS}}\, dz &= \int_{\mathbb{R}^{2d}} \mathrm{tr}(S^*\pi(z)^*TR^*\pi(z)Q)\, dz \\
    &= \int_{\mathbb{R}^{2d}} \mathrm{tr}(TR^*\pi(z)QS^*\pi(z)^*)\, dz \\
    &= \int_{\mathbb{R}^{2d}} (TR^*)\star (PQS^*P) \, dz \\
    &= \langle Q,S \rangle_{\mathcal{HS}} \langle T,R \rangle_{\mathcal{HS}}
\end{align*}
where we have used \Cref{generalmoyal} in moving from the third to fourth line.

\end{proof}
In particular, the third statement gives us that $\mathfrak{V}_S:\mathcal{HS}\to L^2(\mathbb{R}^{2d};\mathcal{HS})$, and the mapping is continuous and injective. It is then natural to consider the Hilbert space adjoint, $\mathfrak{V}_S^*:L^2(\mathbb{R}^{2d};\mathcal{HS})\to\mathcal{HS}$, which is given by 
\begin{align*}
    \mathfrak{V}_S^* \Psi = \int_{\mathbb{R}^{2d}} \pi(z)S\Psi(z)\, dz
\end{align*}
for $\Psi(z)\in L^2(\mathbb{R}^{2d};\mathcal{HS})$. The integral can be interpreted in the weak sense in $\mathcal{HS}$. This can be seen directly;
\begin{align*}
    \langle \mathfrak{V}_S T, \Psi \rangle_{L^2(\mathbb{R}^{2d},\mathcal{HS})} &= \int_{\mathbb{R}^{2d}} \langle S^*\pi(z)^* T,\Psi(z)\rangle_{\mathcal{HS}}\, dz \\
    &= \int_{\mathbb{R}^{2d}} \mathrm{tr}(T\Psi(z)^*S^*\pi(z)^*)\, dz \\
    &= \int_{\mathbb{R}^{2d}} \langle T, \pi(z)S\Psi(z)\rangle_{\mathcal{HS}} \, dz.
\end{align*}
The operator STFT and its adjoint shares the reconstruction property with the function case, namely
\begin{align*}
    \mathfrak{V}_S^* \mathfrak{V}_R T &= \int_{\mathbb{R}^{2d}} \pi(z)SR^*\pi(z)^* T\, dz \\
    &= \int_{\mathbb{R}^{2d}} \alpha_z (SR^*) \, dz T\\
    &= (1\star SR^*) \cdot T = \langle S, R \rangle_{\mathcal{HS}} T
\end{align*}
where we use \eqref{convidentity}. We have as a result
\begin{align}\label{isometry}
    \mathfrak{V}_S^* \mathfrak{V}_S = I_{\mathcal{HS}}
\end{align}
for any $S\in\mathcal{HS}$ such that $\|S\|_{\mathcal{HS}}=1$. The converse then follows immediately, namely that for such an $S$,
\begin{align}\label{adjisometry}
    \mathfrak{V}_S \mathfrak{V}_S^*  = I_{\mathfrak{V}_S (L^2)}.
\end{align}

\section{Reproducing Kernel Structure}\label{repkernstruc}
In this section we examine the structure of the spaces generated by the operator STFT. We begin with the RKHS structure:
\begin{proposition}
    For any $S\in\mathcal{HS}$, the space
    \begin{align*}
        \mathfrak{V}_S(\mathcal{HS}) := \{\mathfrak{V}_S T : T\in\mathcal{HS} \}
    \end{align*}
    is a uniform reproducing kernel Hilbert space as a subspace of $L^2(\mathbb{R}^{2d};\mathcal{HS})$.
\end{proposition}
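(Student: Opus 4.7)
The plan is to verify the two defining properties of a uniform RKHS separately: first that $\mathfrak{V}_S(\mathcal{HS})$ inherits a Hilbert space structure from the Bochner space $L^2(\mathbb{R}^{2d};\mathcal{HS})$, and second that each pointwise evaluation $E_z:\Psi\mapsto\Psi(z)$ is a bounded map into $\mathcal{HS}$ with an operator-norm bound independent of $z$.

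For the Hilbert structure, I would note that the case $S=0$ is trivial, so assume $S\neq 0$. By item (3) of \Cref{stftbasics}, $\mathfrak{V}_S:\mathcal{HS}\to L^2(\mathbb{R}^{2d};\mathcal{HS})$ is a scaled isometry with $\|\mathfrak{V}_S T\|_{L^2(\mathbb{R}^{2d};\mathcal{HS})}=\|S\|_{\mathcal{HS}}\|T\|_{\mathcal{HS}}$. Since $\mathcal{HS}$ is complete, the image $\mathfrak{V}_S(\mathcal{HS})$ is complete in the inherited norm, hence a closed subspace of $L^2(\mathbb{R}^{2d};\mathcal{HS})$, so it is itself a Hilbert space in a canonical way.

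For pointwise evaluation I would first show that each $\mathfrak{V}_S T$ admits a continuous representative, so that $E_z$ is unambiguously defined on equivalence classes. Using the strong continuity of $z\mapsto\pi(z)^*$ on $L^2(\mathbb{R}^d)$ and a density argument on the spectral decomposition $T=\sum_n \lambda_n \psi_n\otimes\phi_n$ (with the uniform bound $\|\pi(z)^*T\|_{\mathcal{HS}}=\|T\|_{\mathcal{HS}}$ controlling the tail), the map $z\mapsto\pi(z)^*T$ is continuous into $\mathcal{HS}$; composing with the bounded operator $S^*$ preserves this continuity. The submultiplicative estimate
\begin{equation*}
    \|E_z(\mathfrak{V}_S T)\|_{\mathcal{HS}} = \|S^*\pi(z)^*T\|_{\mathcal{HS}} \leq \|S\|_{\mathrm{op}}\|T\|_{\mathcal{HS}} = \frac{\|S\|_{\mathrm{op}}}{\|S\|_{\mathcal{HS}}}\|\mathfrak{V}_S T\|_{L^2(\mathbb{R}^{2d};\mathcal{HS})}
\end{equation*}
then gives boundedness of $E_z$ with norm at most $\|S\|_{\mathrm{op}}/\|S\|_{\mathcal{HS}}$, and this bound is manifestly independent of $z$, which is exactly the uniformity condition.

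The only real obstacle is conceptual rather than technical: elements of a Bochner space are equivalence classes, so pointwise evaluation is a priori not well defined on $L^2(\mathbb{R}^{2d};\mathcal{HS})$. The continuity argument above singles out the correct representative within each class, after which the two estimates combine to deliver both the Hilbert subspace property and the uniform reproducing kernel bound.
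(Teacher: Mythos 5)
Your proof is correct and takes essentially the same route as the paper: closedness of the image via the scaled isometry from \Cref{stftbasics}(3), and uniform boundedness of evaluation via the Hilbert--Schmidt ideal inequality $\|S^*\pi(z)^*T\|_{\mathcal{HS}}\leq\|S\|_{\mathrm{op}}\|T\|_{\mathcal{HS}}\leq\|S\|_{\mathcal{HS}}\|T\|_{\mathcal{HS}}$. Your extra step of fixing a continuous representative so that $E_z$ is well defined on equivalence classes in the Bochner space is left implicit in the paper's proof and is a welcome refinement rather than a genuinely different argument.
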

\begin{proof}
We start by confirming that the space is closed, since
\begin{align*}
    \|\mathfrak{V}_S T\|_{L^2(\mathbb{R}^{2d};\mathcal{HS})} = \int_{\mathbb{R}^{2d}}\|\mathfrak{V}_S T(z)\|_{\mathcal{HS}}^2 \, dz = \|S\|_{\mathcal{HS}}^2 \|T\|_{\mathcal{HS}}^2
\end{align*}
from \Cref{stftbasics}. Uniform boundedness of evaluation is quite straightforward;
\begin{align*}
    \|\mathfrak{V}_S T(z)\|_{\mathcal{HS}} = \|S^*\pi(z)^*T\|_{\mathcal{HS}} \leq \|S^*\pi(z)^*\|_{\mathcal{HS}}\|T\|_{\mathcal{HS}}
\end{align*}
\end{proof}
In fact we have already seen from \eqref{adjisometry} that the evaluation operator $E_z$ is given explicitly by $E_z = S^*\pi(z)^* \mathfrak{V}_S^*$, and so we must have that $E_z^* = \mathfrak{V}_S \pi(z)S$. By definition of the kernel function, we have for $S\in\mathcal{HS}$, with $\|S\|_{\mathcal{HS}}=1$, that
\begin{align*}
    K(z,z') &= E_z E_{z'}^* \\
    &= S^*\pi(z)^*\pi(z')S.
\end{align*}
\begin{remark}
\normalfont It should be noted that in the vector-valued RKHS setting the appearance of the operators $S$ and $\pi(z)$ (and their respective adjoints), in the definition of evaluation operator $E_z$ and its adjoint $E_z^*$, denotes the conjugation with these operators. As such we have that $E_z: \mathfrak{V}_S(\mathcal{HS})\to\mathcal{HS}$ and $E_z^*: \mathcal{HS} \to \mathfrak{V}_S(\mathcal{HS})$.
\end{remark}

This kernel is the integral kernel of the projection from $L^2(\mathbb{R}^{2d};\mathcal{HS})$ to $\mathfrak{V}_S(\mathcal{HS})$;
\begin{align} \label{rkhsproj}
    P_S \Psi(z) = \int_{\mathbb{R}^{2d}} K(z,z') \Psi(z') dz'
\end{align}
for $\Psi\in L^2(\mathbb{R}^{2d};\mathcal{HS})$. This defines\todo{formulation?} a projection, which can be seen from a simple calculation of $P_S^2$, and for any $T\in\mathcal{HS}$
\begin{align*}
    \int_{\mathbb{R}^{2d}} K(z,z') \mathfrak{V}_S T(z') dz' &= \mathfrak{V}_S\mathfrak{V}_S^*\mathfrak{V}_S T(z).
\end{align*}
Decomposing $S=\sum_n g_n \otimes e_n$, for orthonormal set $\{e_n\}_{n\in\mathbb{N}}$ and orthogonal set $\{g_n\}_{n\in\mathbb{N}}$ (where $g_i$ may be $0$), we find
\begin{align*}
    K(z,z') = \sum_{n,m\geq 0} \langle \pi(z')g_n, \pi(z)g_m\rangle_{L^2} \, e_m \otimes e_n.
\end{align*}
On the diagonals we have precisely the reproducing kernels of the scalar-valued Gabor spaces with windows $g_n$, that is to say kernels of the projections $V_{g_n} V_{g_n}^*$, but we have in addition the off-diagonal terms corresponding to the kernels of the maps $V_{g_n} V_{g_m}^*$. As a general property of RKHS', we have the inclusion
\begin{align*}
    \mathfrak{V}_S (\mathcal{HS}) \subset L^2(\mathbb{R}^{2d};\mathcal{HS}) \cap L^{\infty}(\mathbb{R}^{2d};\mathcal{HS}),
\end{align*}
since
\begin{align*}
    \|\mathfrak{V}_S (T)(z)\|^2_{\mathcal{HS}} &\leq \langle \mathfrak{V}_S (T), E_z^*E_z \mathfrak{V}_S (T)\rangle_{L^2(\mathbb{R}^{2d};\mathcal{HS})} \\
    &= \|\mathfrak{V}_S (T)\|^2_{L^2(\mathbb{R}^{2d};\mathcal{HS})}.
\end{align*}

\subsection{Characterisation from Twisted Convolution}
In an analogue way to the characterisation of Gabor space in terms of the twisted convolution, we can characterise the RKHS $\mathfrak{V}_S(\mathcal{HS})$ by the equivalent condition.
\begin{proposition}
Given $\Psi\in L^2(\mathbb{R}^{2d};\mathcal{HS})$, and $S\in\mathcal{HS}$ such that $\|S\|_{\mathcal{HS}}=1$;
\begin{align*}
    \Psi\natural\mathfrak{V}_S S = \Psi \iff \Psi \in \mathfrak{V}_S (\mathcal{HS}).
\end{align*}
\end{proposition}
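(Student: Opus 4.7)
The plan is to reduce the biconditional to a single identity, namely
\[
\Psi \natural \mathfrak{V}_S S \;=\; \mathfrak{V}_S(\mathfrak{V}_S^*\Psi),
\]
valid for every $\Psi \in L^2(\mathbb{R}^{2d};\mathcal{HS})$. Once this is in hand, both directions fall out immediately: if $\Psi = \mathfrak{V}_S T$ lies in $\mathfrak{V}_S(\mathcal{HS})$, the identity combined with $\mathfrak{V}_S^*\mathfrak{V}_S = I_{\mathcal{HS}}$ (equation \eqref{isometry}) yields $\Psi \natural \mathfrak{V}_S S = \mathfrak{V}_S \mathfrak{V}_S^* \mathfrak{V}_S T = \mathfrak{V}_S T = \Psi$; conversely, if $\Psi \natural \mathfrak{V}_S S = \Psi$, the identity exhibits $\Psi$ as the operator STFT of the Hilbert--Schmidt operator $\mathfrak{V}_S^*\Psi$, hence $\Psi \in \mathfrak{V}_S(\mathcal{HS})$.

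To prove the central identity, I would unfold the definitions:
\begin{align*}
\Psi \natural \mathfrak{V}_S S(z) = \int_{\mathbb{R}^{2d}} \bigl(S^* \pi(z-z')^* S\bigr)\Psi(z')\, c(z,z')\, dz'.
\end{align*}
The next step is to absorb the cocycle factor into the time-frequency shift using the paper's relation $\pi(z)^*\pi(z') = c(z,z')\pi(z-z')^*$, which rewrites $c(z,z')\pi(z-z')^* = \pi(z)^*\pi(z')$. Pulling the $z'$-independent factor $S^*\pi(z)^*$ out of the integral then leaves
\begin{align*}
\Psi \natural \mathfrak{V}_S S(z) = S^*\pi(z)^* \int_{\mathbb{R}^{2d}} \pi(z') S\, \Psi(z')\, dz' = S^*\pi(z)^*\bigl(\mathfrak{V}_S^* \Psi\bigr) = \mathfrak{V}_S(\mathfrak{V}_S^*\Psi)(z),
\end{align*}
where the middle equality is the explicit formula for $\mathfrak{V}_S^*$ already derived after \Cref{stftbasics}.

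The only technical subtlety lies in justifying the interchange of operator composition and the Bochner integral when pulling $S^*\pi(z)^*$ out: since $S^*\pi(z)^* \in \mathcal{L}(L^2)$ is a bounded linear map and the integrand $z' \mapsto \pi(z')S\Psi(z')$ is Bochner integrable in $\mathcal{HS}$ (as $\Psi \in L^2(\mathbb{R}^{2d};\mathcal{HS})$ and $\|\pi(z')S\|_{\mathrm{op}} \le \|S\|_{\mathcal{HS}}$), this is a standard property of the Bochner integral and causes no real difficulty. The main conceptual step, and the one I would emphasize in the write-up, is recognizing that the twisted convolution against the reproducing window collapses exactly to the orthogonal projection $\mathfrak{V}_S \mathfrak{V}_S^*$ onto $\mathfrak{V}_S(\mathcal{HS})$ seen in \eqref{rkhsproj} — the kernel identity then makes the correspondence principle at the Hilbert space level essentially tautological.
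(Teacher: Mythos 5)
Your proof is correct and follows essentially the same route as the paper: the key identity $\Psi\natural\mathfrak{V}_S S=\mathfrak{V}_S\mathfrak{V}_S^*\Psi=P_S\Psi$ is exactly what the paper uses (via the kernel $K(z,z')$ and \eqref{rkhsproj}) for the reverse implication, and your forward direction via $\mathfrak{V}_S^*\mathfrak{V}_S=I_{\mathcal{HS}}$ is equivalent to the paper's computation of $\mathfrak{V}_Q T\natural\mathfrak{V}_S R=\langle R,Q\rangle_{\mathcal{HS}}\mathfrak{V}_S T$ specialized to $Q=R=S$, both resting on \eqref{convidentity}. The only (cosmetic) difference is that you derive a single identity serving both directions, which is a slightly tidier organization of the same argument.
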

\begin{proof}
On the one hand we have that for $Q,R,S,T\in\mathcal{HS}$;
\begin{align} \label{twistedconv}
    \mathfrak{V}_Q T \natural \mathfrak{V}_S R(z) &= \int_{\mathbb{R}^{2d}} S^*\pi(z-z')^*R Q^*\pi(z')^*Te^{-2\pi i x(\omega-\omega')}dz' \nonumber \\
    &= S^*\pi(z)^*\int_{\mathbb{R}^{2d}}\pi(z')R Q^*\pi(z')^*dz' T \nonumber \\
    &= \langle R, Q \rangle_{\mathcal{HS}} \mathfrak{V}_S T(z),
\end{align}
where the last inequality follows from \eqref{convidentity}, and hence the one direction follows in the case $Q=R=S$. On the other, from \eqref{rkhsproj}, 
\begin{align*}
    \Psi \natural \mathfrak{V}_S S(z) &= \int_{\mathbb{R}^{2d}} S^*\pi(z-z')^*S \Psi(z') e^{-2\pi i x(\omega-\omega')}\, dz' \\
    &= \int_{\mathbb{R}^{2d}} K(z,z') \Psi(z')\, dz' \\
    &= \big(P_S \Psi\big)(z)= \Psi(z)
\end{align*}
implies $\Psi\in \mathfrak{V}_S(\mathcal{HS})$.

\end{proof}

\subsection{Toeplitz operators}
With a RKHS structure, it is natural to consider what the corresponding Toeplitz operators on the space look like. Toeplitz operators are of the form $T_f = P_V M_f$, that is to say a pointwise multiplication by some $f\in L^{\infty}(\mathbb{R}^{2d})$, followed by a projection back onto the RKHS. In the case of Gabor spaces these are precisely the localisation or anti-Wick operators, which are accordingly also called Gabor-Toeplitz operators \cite{feich14}. Considering the Toeplitz operators on $\mathfrak{V}_S (\mathcal{HS})$, we have operators of the type
\begin{align*}
    T_f (\mathfrak{V}_S T) = \mathfrak{V}_S \mathfrak{V}_S^* (f\cdot\mathfrak{V}_S T)
\end{align*}
where $f\in L^{\infty}(\mathbb{R}^{2d})$ and $f\cdot\mathfrak{V}_S T$ is pointwise multiplication. We then define the unitarily equivalent operator $\Theta(T_f) := \mathfrak{V}_S^*T_f\mathfrak{V}_S$ on $\mathcal{HS}$: 
\begin{align*}
    \Theta (T_f)(T) &= \mathfrak{V}_S^*\mathfrak{V}_S \mathfrak{V}_S^* (f\cdot\mathfrak{V}_S T) \\
    &= \mathfrak{V}_S^* (f\cdot\mathfrak{V}_S T) \\
    &= \int_{\mathbb{R}^{2d}} f(z) \pi(z)SS^*\pi(z)^*T\, dz \\
    &= f \star (SS^*) T,
\end{align*}
and hence Toeplitz operators in the operator case correspond to the composition with the mixed-state localisation operators discussed in \cite{skrett19}.

\section{Coorbit Spaces for Operators}\label{opcoorbit}
From the previous section, we have a characterisation of the space $\mathfrak{V}_S(\mathcal{HS})$. We now turn to other classes which can be similarly characterised. In particular, from \Cref{stftbasics} the Hilbert-Schmidt operators are precisely the operators $\{T\in\mathcal{L}(L^2(\mathbb{R}^d)): \mathfrak{V}_S T\in L^2(\mathbb{R}^{2d};\mathcal{HS}) \}$ for $S \in \mathcal{HS}$, similarly to the function case of $L^2(\mathbb{R}^d)=M^2(\mathbb{R}^d)$. We therefore set out to define what we refer to as \textit{operator coorbit spaces}. In the sequel, $v(z)$ will be a sub-multiplicative weight function of polynomial growth on phase space, and $m(z)$ will be a $v$-moderate weight function on phase space.
\subsection{The $\mathfrak{M}^1_v$ case}
In a similar vein to the function case we define the admissible operators, for a weight function $v$, to be 
\begin{align}\label{admcond}
    \mathfrak{A}_v := \{S\in\mathcal{HS}: \mathfrak{V}_S S \in L^1_v(\mathbb{R}^{2d};\mathcal{HS}) \}.
\end{align}
\begin{example}
\normalfont Clearly any rank one operator which can be written as $T=f\otimes \psi$, where $f\in M^1_v(\mathbb{R}^d)$ and $\psi\in L^2(\mathbb{R}^d)$, is in $\mathfrak{A}_v$.
\end{example}

We set $S_0 = \varphi_0 \otimes \varphi_0$, and define the space
\begin{align}\label{m1cond}
    \mathfrak{M}^1_v := \{T \in \mathcal{HS}: \mathfrak{V}_{S_0} T \in L^1_v(\mathbb{R}^{2d};\mathcal{HS}) \}
\end{align}
with corresponding norm $\|T\|_{\mathfrak{M}^1_v} = \|\mathfrak{V}_{S_0} T\|_{L^1_v(\mathbb{R}^{2d};\mathcal{HS})}$, and we denote the unweighted version $v(z)\equiv 1$ by $\mathfrak{M}^1$. 
\begin{remark}\label{tfintuition}
\normalfont Considering $\|(\varphi_0\otimes\varphi_0)\pi(z)^*T\|_{\mathcal{HS}}$, it is easy to see how the $\mathfrak{M}^{1}_v$ condition (and later the $\mathfrak{M}^{p,q}_m$ conditions) can be seen to measure the time-frequency localisation of an operator. In this case, the $\mathfrak{M}^1_v$ condition is simply a measure of how time-frequency translations of $\varphi_0$ decay as arguments of $T^*$: $\int v(z)\|T^*(\pi(z)\varphi_0)\|_{L^2} dz$. Following this line of reasoning, we consider appropriate localisation operators of the type in \cref{locopex}:
\end{remark}
\todo{Say what happens next}
\begin{example}
\normalfont For a localisation operator $A_h^{\psi}$, if $h\in L^1(\mathbb{R}^{2d})$ and $\psi\in M^1(\mathbb{R}^d)$, then $A_h^{\psi}\in\mathfrak{M}^1$;\todo{something is unclear in first step- what is $A_{\overline{h}}^{\psi}(z)$? }
\begin{align*}
    \int_{\mathbb{R}^{2d}} \|\mathfrak{V}_{S_0} A_h^{\psi}(z)\|_{\mathcal{HS}}dz &= \int_{\mathbb{R}^{2d}} \| \varphi_0\otimes \varphi_0 \pi(z)^* A_{h}^{\psi}\|_{\mathcal{HS}}dz \\
    &= \int_{\mathbb{R}^{2d}} \| A_{\overline{h}}^{\psi}\pi(z)\varphi_0\|_{L^2}dz \\
    &= \int_{\mathbb{R}^{2d}} \| \int_{\mathbb{R}^{2d}} \overline{h}(z')\langle \pi(z)\varphi_0,\pi(z')\psi\rangle \pi(z')\psi\, dz' \|_{L^2} dz \\
    &\leq \int_{\mathbb{R}^{2d}} |\overline{h}(z')| \int_{\mathbb{R}^{2d}} |V_{\psi} \varphi_0 (z-z')|\, dz\, dz'.
\end{align*}
\end{example}

Since $\mathfrak{V}_{S_0}(\mathcal{HS})$ is a RKHS, it is clear that $\mathfrak{M}^1_v \subset \mathcal{HS}$. This inclusion is continuous, since
\begin{align*}
    \|T\|_{\mathcal{HS}}^2 &= \|\mathfrak{V}_{S_0} T\|_{L^2(\mathbb{R}^{2d};\mathcal{HS})}^2 \\
    &= \int_{\mathbb{R}^{2d}} v(z)\|\mathfrak{V}_{S_0} T(z) \|_{\mathcal{HS}}\|\mathfrak{V}_{S_0} T(z) \|_{\mathcal{HS}}\, dz \\
    &\leq \|T\|_{\mathcal{HS}} \|T\|_{\mathfrak{M}^1_v}
\end{align*}
where we have used that $\|\mathfrak{V}_S T(z)\|_{\mathcal{HS}}\leq\|S\|_{\mathcal{HS}}\|T\|_{\mathcal{HS}}$ for every $z$. We can hence decompose every $T \in \mathfrak{M}^1_v$ as $T=\sum_{n\geq 0} f_n \otimes e_n$ for some orthonormal system $\{e_n\}_n$ and orthogonal $\{f_n\}_n$. The $\mathfrak{M}^1_v$ condition \Cref{m1cond} is then equivalent to 
\begin{align*}
    \mathfrak{M}^1_v = \{ T = \sum_n f_n \otimes e_n \in \mathcal{HS}: \int_{\mathbb{R}^{2d}} v(z)\|V_{\varphi_0} f_n (z)\|_{l^2(\mathbb{N})} dz<\infty \}.
\end{align*}
Noting that
\begin{align*}
    \|T\|_{\mathfrak{M}^1_v} \geq \int_{\mathbb{R}^{2d}} v(z) |V_{\varphi_0} f_n (z)| dz
\end{align*}
for each $n$, we find that $f_n \in M^1_v(\mathbb{R}^d)$ for all $n$ when $T\in \mathfrak{M}^1_v$, with $\|f\|_{M^1_v} \leq \|T\|_{\mathfrak{M}^1_v}$. 
\begin{claim} \label{nuclearinc}
The space of nuclear operators $\mathcal{N}(L^2(\mathbb{R}^d);M^1_v(\mathbb{R}^d))$ is contained in $\mathfrak{M}^1_v$.
\end{claim}
\begin{proof}
\normalfont Taking some $T\in \mathcal{N}(L^2(\mathbb{R}^d);M^1_v(\mathbb{R}^d))$, we can decompose $T=\sum_n f_n\otimes g_n$, with $\sum_n \|f_n\|_{M^1_v}\|g_n\|_{L^2} < \infty$, we assume without loss of generality that $\|g\|_{L^2}=1$ for all $n$. Here neither the $f_n$ or $g_n$ are necessarily orthogonal. We have that
\begin{align*}
    \mathfrak{V}_{S_0} T(z) =\sum_{n} V_{\varphi_0} f_n(z) \varphi_0 \otimes g_m.
\end{align*}
It then follows that
\begin{align*}
    \|\mathfrak{V}_{S_0} T(z) \|_{\mathcal{HS}} &= \big(\sum_{n,m} |\langle g_m, g_n \rangle V_{\varphi_0} f_n(z)V_{\varphi_0} f_m(z)|\big)^{1/2}  \\
    &\leq \big(\sum_{n,m} |V_{\varphi_0} f_n(z)V_{\varphi_0} f_m(z)|\big)^{1/2} \\
    &= \sum_{n} |V_{\varphi_0} f_n(z)|,
\end{align*}
since we assumed $\|g_n\|_{L^2}=1$. The nuclear condition thus gives that
\begin{align*}
    \int_{\mathbb{R}^{2d}} v(z) \|\mathfrak{V}_{S_0} T(z) \|_{\mathcal{HS}} \, dz &\leq \int_{\mathbb{R}^{2d}} \sum_n v(z) |V_{\varphi_0} f_n(z)|\, dz \\
    &= \sum_n \int_{\mathbb{R}^{2d}} v(z) |V_{\varphi_0} f_n(z)|\, dz \\
    &= \sum_n \|f_n\|_{M^1_v} \\
    &< \infty.
\end{align*}
We conclude that $\mathcal{N}(L^2(\mathbb{R}^d);M^1_v(\mathbb{R}^d)) \subset \mathfrak{M}^1_v$
\end{proof}

\begin{remark} \label{nuclear2inc}
\normalfont Any $T\in \mathfrak{M}^1_v$ can be written in the form  $\sum_n f_n \otimes e_n$, where $\|e_n\|_{L^2}=1$ for all $n$, and $\{\|f_n\|_{M^1_v}\}_n\in l^2$. This follows from the inequality 
\begin{align*}
    \big\| \int_{\mathbb{R}^{2d}} |V_{\varphi_0} f_n(z)| \, dz \big\|_{l^2} \leq \int_{\mathbb{R}^{2d}} \|V_{\varphi_0} f_n(z)\|_{l^2}\, dz.
\end{align*}
As a result, for the unweighted case, $S\in\mathfrak{M}^1 \implies SS^*\in \mathcal{N}(M^1(\mathbb{R}^2);M^1(\mathbb{R}^2))$, or alternatively $\sigma_{SS^*}\in M^1(\mathbb{R}^{2d})$ where $\sigma_{SS^*}$ is the Weyl symbol of $SS^*$ \cite{jakob22}.
\end{remark}

\begin{remark} \label{rankoneschatten}
\normalfont
An operator $T=\sum_n f_n\otimes e_n$ in the space $\mathfrak{M}^1_v$ also satisfies the condition
\begin{align*}
    \mathfrak{V}_{S_0} T \in L^1(\mathbb{R}^{2d};\mathcal{S}^p),
\end{align*}
since $\mathfrak{V}_{S_0} T$ takes the values of rank one operators, and so all Schatten class norms coincide. However, as we will later see, using the Hilbert-Schmidt norm is required when considering results for a wider class of window operators. Hence for the sake of consistency we define the operator coorbit spaces in terms of the Hilbert-Schmidt norm.
\end{remark}

As a corollary of \Cref{nuclearinc}, operators $T\in M^1_v(\mathbb{R}^d) \hat{\otimes}_{\pi} M^1_v(\mathbb{R}^d)$, and in the case of polynomial growth of $v$ the Schwartz operators $\mathfrak{S}$, are contained in $\mathfrak{M}^1_v$. We will use this to give a suitably large reservoir for defining general coorbit spaces. 

\subsection{The general $\mathfrak{M}^{p,q}_m$ case}
We then define the operator coorbit spaces for $1\leq p,q \leq \infty$ and $v$-moderate weight $m$ by
\begin{align*}
    \mathfrak{M}^{p,q}_m := \{T \in \mathfrak{S}': S_0^* \pi(z)^* T \in L^{p,q}_m(\mathbb{R}^{2d};\mathcal{HS}) \}.
\end{align*}
with norms $\|T\|_{\mathfrak{M}^{p,q}_m} = \|\mathfrak{V}_{S_0} T\|_{L^{p,q}_m}$. 
\begin{example}
\normalfont As in the $\mathfrak{M}^1_v$ case, any rank one operator which can be written as $T=f\otimes \psi$, where $f\in M^{p,q}_v(\mathbb{R}^d)$ and $\psi\in L^2$, is in $\mathfrak{M}^{p,q}_v$.
\end{example}
\begin{remark}
\normalfont Since we restrict our focus to weights of polynomial growth, the Schwartz operator dual is a sufficiently large reservoir, although if we wished to extend to a larger class of weights this may fail. For weights of exponential growth one has to use ultradistributions \cite{pite04,cato17} and it seems to be a promising topic for future research to study these kind of objects in our setting, too. 
\end{remark}
We use the notation $\mathfrak{V}_S T(z) = S^*\pi(z)^* T$ for $S\in\mathfrak{M}^1_v$ and $T\in \mathfrak{M}^{p,q}_m$, and similarly $\mathfrak{V}_S^* \Psi = \int_{\mathbb{R}^{2d}} \pi(z)S\Psi(z)dz$ for $\Psi\in L^{p,q}_m(\mathbb{R}^{2d};\mathcal{HS})$. The map $\mathfrak{V}_S$ is injective, as for any non-zero $R\in\mathfrak{M}^{p,q}_m$ there exists some $f\in L^2(\mathbb{R}^d)$ such that $Rf$ is non-zero, and so the injectivity of $\mathfrak{V}_S$ follows from the properties of the function STFT. The $\mathfrak{M}^{p,q}_m$ spaces are clearly closed under addition and scalar multiplication. To show that they are in fact Banach spaces, we use the following lemma:
\begin{lemma}\label{projidentity}
For $1\leq p \leq \infty$ and $S\in\mathfrak{A}_v$, the map $\mathfrak{V}_S \mathfrak{V}_S^*$ is a bounded operator on $L^{p,q}_m(\mathbb{R}^{2d};\mathcal{HS})$, and if $\|S\|_{\mathcal{HS}}=1$ then its restriction to $\mathfrak{V}_S (\mathfrak{M}^{p,q}_m)$ is the identity.
\end{lemma}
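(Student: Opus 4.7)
The plan is to identify $\mathfrak{V}_S \mathfrak{V}_S^*$ with the twisted convolution operator $\Psi \mapsto \Psi \natural \mathfrak{V}_S S$, at which point boundedness follows from Young's inequality for twisted convolution and the identity on $\mathfrak{V}_S(\mathfrak{M}^{p,q}_m)$ follows from the algebraic computation already performed in \eqref{twistedconv}.

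First I would establish the pointwise identity
\begin{align*}
    \mathfrak{V}_S \mathfrak{V}_S^* \Psi(z) \;=\; (\Psi \natural \mathfrak{V}_S S)(z)
\end{align*}
for $\Psi \in L^{p,q}_m(\mathbb{R}^{2d};\mathcal{HS})$. This proceeds by writing $\mathfrak{V}_S \mathfrak{V}_S^* \Psi(z) = S^*\pi(z)^* \int \pi(z') S \Psi(z')\,dz'$, bringing the outer operator $S^* \pi(z)^*$ inside the integral, and rewriting $\pi(z)^*\pi(z')$ using the cocycle relation to recover the twisted convolution kernel $S^*\pi(z-z')^* S$ with the phase factor. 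This is the same manipulation used in the proof of the reproducing identity $\Psi \natural \mathfrak{V}_S S = P_S \Psi$ for $L^2$, but it is purely algebraic at the level of pointwise evaluation in $\mathcal{HS}$, so it does not require $\Psi \in L^2$.

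Boundedness is then immediate from \Cref{youngsineq}: since $S \in \mathfrak{A}_v$ means $\mathfrak{V}_S S \in L^1_v(\mathbb{R}^{2d};\mathcal{HS})$, one obtains
\begin{align*}
    \|\mathfrak{V}_S \mathfrak{V}_S^* \Psi\|_{L^{p,q}_m} = \|\Psi \natural \mathfrak{V}_S S\|_{L^{p,q}_m} \leq C_{m,v}\,\|\mathfrak{V}_S S\|_{L^1_v}\,\|\Psi\|_{L^{p,q}_m}.
\end{align*}
For the second claim, take $\|S\|_{\mathcal{HS}}=1$, $T \in \mathfrak{M}^{p,q}_m$ and set $\Psi = \mathfrak{V}_S T$. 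The computation in \eqref{twistedconv} expresses $\mathfrak{V}_S T \natural \mathfrak{V}_S S(z)$ as $S^*\pi(z)^* \big(\int \pi(z')SS^*\pi(z')^*\,dz'\big)T$, and by \eqref{convidentity} the bracketed operator equals $\|S\|^2_{\mathcal{HS}}I = I$, giving $\mathfrak{V}_S T \natural \mathfrak{V}_S S = \mathfrak{V}_S T$.

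The main obstacle is the extension of \eqref{twistedconv} from $T \in \mathcal{HS}$ to $T \in \mathfrak{M}^{p,q}_m \subset \mathfrak{S}'$: the operator $T$ need not be Hilbert-Schmidt, so the integrals defining $\mathfrak{V}_S^*\Psi$ and $\int\pi(z')SS^*\pi(z')^* T\,dz'$ must be interpreted weakly in $\mathfrak{S}'$. The task is to justify the relevant Fubini-type interchange and the fact that $1\star SS^* = I$ acts correctly on $T$ in this weak sense, which is legitimate because $S \in \mathfrak{A}_v$ ensures the $\mathfrak{S}$-pairings of all the iterated integrals are absolutely convergent when tested against Schwartz operators.
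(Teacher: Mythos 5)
Your proposal matches the paper's proof essentially step for step: the paper likewise identifies $\mathfrak{V}_S\mathfrak{V}_S^*\Psi$ with $\Psi\natural\mathfrak{V}_S S$ (via the kernel $K(z,z')$), invokes \Cref{youngsineq} for boundedness using $\mathfrak{V}_S S\in L^1_v$, and verifies the identity on $\mathfrak{V}_S(\mathfrak{M}^{p,q}_m)$ through $\int_{\mathbb{R}^{2d}}\alpha_z(SS^*)\,dz = \mathrm{tr}(SS^*)I = I$. Your closing caveat about extending the twisted-convolution identity from $T\in\mathcal{HS}$ to $T\in\mathfrak{S}'$ by weak interpretation is a point the paper only acknowledges in passing later (``examining the argument confirms we are justified in using this for general $T$''), so your treatment is, if anything, slightly more careful than the original.
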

\begin{proof}
We begin by noting that for $\Psi \in L^{p,q}_m(\mathbb{R}^{2d};\mathcal{HS})$;
\begin{align*}
    \mathfrak{V}_S \mathfrak{V}_S^*\Psi(z) &= \int_{\mathbb{R}^{2d}} K(z,z') \Psi(z') dz' \\
    &= \int_{\mathbb{R}^{2d}} S^*\pi(z)^*\pi(z')S \Psi(z') dz' \\
    &= \Psi\natural \mathfrak{V}_S S(z).
\end{align*}
Hence from \Cref{youngsineq};
\begin{align*}
    \| \mathfrak{V}_S \mathfrak{V}_S^*\Psi \|_{L^{p,q}_m} &= \|\Psi\natural \mathfrak{V}_S S(z)\|_{L^{p,q}_m} \\
    &\leq C_{m,v}\|\Psi\|_{L^{p,q}_m} \|\mathfrak{V}_S S\|_{L^1_v}
\end{align*}
and so $\mathfrak{V}_S \mathfrak{V}_S^*$ is bounded, since $S\in\mathfrak{A}_v$. For $\mathfrak{V}_S T \in L^{p,q}_m(\mathbb{R}^{2d};\mathcal{HS})$, as in the $\mathcal{HS}$ case we observe 
\begin{align*}
    \mathfrak{V}_S \mathfrak{V}_S^* \mathfrak{V}_S T &= \mathfrak{V}_S \int_{\mathbb{R}^{2d}} \pi(z)S S^*\pi(z)^*T\, dz \\
    &= \mathfrak{V}_S \int_{\mathbb{R}^{2d}} \alpha_z(S^*S)\, dz\, T = \mathfrak{V}_S T.
\end{align*}
\end{proof}
\begin{corollary}\label{stftinvmod}
    For $1\leq p \leq \infty$ and $S\in\mathfrak{A}_v$, $\mathfrak{V}_S^*$ is a bounded map from $L^{p,q}_m(\mathbb{R}^{2d};\mathcal{HS})$ to $\mathfrak{M}^{p,q}_m$.
\end{corollary}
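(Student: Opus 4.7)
The plan is to bound the coorbit norm directly from its definition: by construction $\|\mathfrak{V}_S^*\Psi\|_{\mathfrak{M}^{p,q}_m} = \|\mathfrak{V}_{S_0}(\mathfrak{V}_S^*\Psi)\|_{L^{p,q}_m}$, so I need to show the composite $\mathfrak{V}_{S_0}\mathfrak{V}_S^*$ is bounded from $L^{p,q}_m(\mathbb{R}^{2d};\mathcal{HS})$ to itself. The first step is to establish the cross-window analogue of the identity appearing in the proof of \Cref{projidentity}. Running through exactly the same manipulation with $S_0$ in place of $S$ on the outer left---unwinding the weak integral defining $\mathfrak{V}_S^*\Psi$ and applying the cocycle relation $\pi(z)^*\pi(z') = c(z,z')\pi(z-z')^*$ to recognise a twisted convolution---one obtains
\begin{align*}
    \mathfrak{V}_{S_0}(\mathfrak{V}_S^*\Psi)(z) = \Psi \natural \mathfrak{V}_{S_0}S(z).
\end{align*}

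The twisted Young inequality for vector-valued functions (\Cref{youngsineq}) then immediately yields
\begin{align*}
    \|\mathfrak{V}_S^*\Psi\|_{\mathfrak{M}^{p,q}_m} \leq C_{m,v}\,\|\Psi\|_{L^{p,q}_m}\|\mathfrak{V}_{S_0}S\|_{L^1_v},
\end{align*}
so the entire boundedness claim collapses to the single requirement $\mathfrak{V}_{S_0}S \in L^1_v$.

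I expect this last step to be the principal obstacle, since the hypothesis $S \in \mathfrak{A}_v$ by definition controls only $\mathfrak{V}_S S$, not the cross-window quantity $\mathfrak{V}_{S_0}S$. My plan to bridge the gap is to invoke the symmetry in \Cref{stftbasics}, giving $\|\mathfrak{V}_{S_0}S(z)\|_{\mathcal{HS}} = \|\mathfrak{V}_S S_0(-z)\|_{\mathcal{HS}}$, together with $v(-z) \asymp v(z)$ (which follows from submultiplicativity and polynomial growth of $v$), to reduce to showing $\mathfrak{V}_S S_0 \in L^1_v$. Since $S_0 = \varphi_0 \otimes e_0$ is rank one, a direct computation gives $\mathfrak{V}_S S_0(z) = (S^*\pi(z)^*\varphi_0)\otimes e_0$, and the claim becomes $\int_{\mathbb{R}^{2d}} v(z)\|S^*\pi(z)^*\varphi_0\|_{L^2}\,dz < \infty$. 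Expanding $S$ in its singular value decomposition and invoking the classical function-side window-independence of $M^1_v$---the admissibility condition on $S$ places each singular vector $\psi_n$ in $M^1_v$ with summable singular values through the diagonal terms $V_{\psi_n}\psi_n$ appearing in $\mathfrak{V}_S S$---then produces the desired $L^1_v$ estimate and completes the argument.
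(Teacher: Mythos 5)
Your overall strategy is sound, and in one respect more honest than the paper's own treatment: the paper deduces this corollary from \Cref{projidentity}, which only controls the same-window composition $\mathfrak{V}_S\mathfrak{V}_S^*$, and therefore implicitly leans on the window-independence of $\mathfrak{M}^{p,q}_m$ (proved only later) to pass from $\mathfrak{V}_S(\mathfrak{V}_S^*\Psi)\in L^{p,q}_m$ to membership in $\mathfrak{M}^{p,q}_m$ as defined via $S_0$. Your cross-window identity $\mathfrak{V}_{S_0}(\mathfrak{V}_S^*\Psi)=\Psi\natural\mathfrak{V}_{S_0}S$ is correct, the application of \Cref{youngsineq} is correct, and you have correctly isolated the actual content of the statement: everything reduces to $\mathfrak{V}_{S_0}S\in L^1_v$, i.e.\ to the inclusion $\mathfrak{A}_v\subseteq\mathfrak{M}^1_v$.

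It is exactly this last step where your argument has a genuine gap. Writing $S=\sum_n\lambda_n\psi_n\otimes\phi_n$ and using the triangle inequality on
\begin{align*}
\|S^*\pi(z)^*\varphi_0\|_{L^2}=\Bigl(\sum_n\lambda_n^2|V_{\psi_n}\varphi_0(z)|^2\Bigr)^{1/2}\leq\sum_n\lambda_n|V_{\psi_n}\varphi_0(z)|
\end{align*}
forces you to prove $\sum_n\lambda_n\|\psi_n\|_{M^1_v}<\infty$, which is (up to the STFT symmetry) precisely the nuclearity condition $S\in M^1_v\hat\otimes_\pi L^2=\mathcal{N}(L^2;M^1_v)$. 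Admissibility does not give this: the diagonal terms of $\mathfrak{V}_SS$ enter the Hilbert--Schmidt norm in $\ell^2$ fashion, so $S\in\mathfrak{A}_v$ only yields $\lambda_n^2\|V_{\psi_n}\psi_n\|_{L^1_v}\leq\|\mathfrak{V}_SS\|_{L^1_v}$ for each \emph{fixed} $n$, hence $\psi_n\in M^1_v$ individually, with no summability of $\lambda_n\|\psi_n\|_{M^1_v}$. Indeed, if admissibility implied that summability, then $\mathfrak{A}_v$ would collapse to $\mathcal{N}(L^2;M^1_v)$, contradicting the paper's central claim (the affirmative answer to the Guo--Zhao question) that this inclusion is proper. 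Note also that even the weaker bound $\sum_n\lambda_n^2\|V_{\psi_n}\varphi_0\|_{L^1_v}^2<\infty$ would not close the argument, since the $L^1_v$ norm of an $\ell^2$-valued function is not controlled by the $\ell^2$ norm of the individual $L^1_v$ norms. The correct bridge is again the twisted convolution: pick an auxiliary rank-one $R=\psi\otimes e$ with $\psi\in M^1_v$ and $\langle R,S\rangle_{\mathcal{HS}}\neq0$, so that \eqref{twistedconv} gives $\mathfrak{V}_{S_0}S=\langle R,S\rangle^{-1}\,\mathfrak{V}_SS\natural\mathfrak{V}_{S_0}R$, and then \Cref{youngsineq} yields $\|\mathfrak{V}_{S_0}S\|_{L^1_v}\leq C\,|\langle R,S\rangle|^{-1}\|\mathfrak{V}_SS\|_{L^1_v}\|\mathfrak{V}_{S_0}R\|_{L^1_v}<\infty$. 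A smaller point: $v(-z)\asymp v(z)$ does not follow from submultiplicativity plus polynomial growth (consider $v(x)=(1+\max(x,0))^N$); it is an additional, if standard, symmetry hypothesis --- but the convolution route just described avoids passing through $\mathfrak{V}_SS_0$ and so does not need it.
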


\begin{proposition}
    For $1\leq p \leq \infty$, $1\leq q \leq \infty$, and $v$-moderate $m$, $\mathfrak{M}^{p,q}_m$ is a Banach space.
\end{proposition}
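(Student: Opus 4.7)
The plan is to establish the norm properties and then deduce completeness by transporting the problem to the ambient Bochner–Lebesgue space via the operator STFT and its adjoint.

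First, I would verify the three norm axioms. Homogeneity and the triangle inequality are immediate from the linearity of $\mathfrak{V}_{S_0}$ together with the fact that $\|\cdot\|_{L^{p,q}_m(\mathbb{R}^{2d};\mathcal{HS})}$ is already a norm. For positive definiteness, if $\|T\|_{\mathfrak{M}^{p,q}_m}=0$ then $\mathfrak{V}_{S_0}T$ vanishes as an element of $L^{p,q}_m$, and the injectivity of $\mathfrak{V}_{S_0}$ on $\mathfrak{S}'$ (already established in the paragraph preceding \Cref{projidentity}) forces $T=0$.

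For completeness, take a Cauchy sequence $\{T_n\}\subset \mathfrak{M}^{p,q}_m$. By definition of the norm, $\{\mathfrak{V}_{S_0}T_n\}$ is Cauchy in the Bochner–Lebesgue space $L^{p,q}_m(\mathbb{R}^{2d};\mathcal{HS})$, which is a Banach space, so there exists $\Psi \in L^{p,q}_m(\mathbb{R}^{2d};\mathcal{HS})$ with $\mathfrak{V}_{S_0}T_n \to \Psi$. Define the candidate limit
\begin{equation*}
    T := \mathfrak{V}_{S_0}^*\Psi.
\end{equation*}
By \Cref{stftinvmod}, $T$ lies in $\mathfrak{M}^{p,q}_m$, so the task reduces to showing $\mathfrak{V}_{S_0}T = \Psi$ in $L^{p,q}_m$.

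The key identity is obtained by applying \Cref{projidentity}: the operator $\mathfrak{V}_{S_0}\mathfrak{V}_{S_0}^*$ is bounded on $L^{p,q}_m(\mathbb{R}^{2d};\mathcal{HS})$ and acts as the identity on each $\mathfrak{V}_{S_0}T_n$ since $T_n \in \mathfrak{M}^{p,q}_m$. Passing to the limit in
\begin{equation*}
    \mathfrak{V}_{S_0}\mathfrak{V}_{S_0}^*(\mathfrak{V}_{S_0}T_n) = \mathfrak{V}_{S_0}T_n,
\end{equation*}
using continuity on the left and convergence on the right, gives $\mathfrak{V}_{S_0}\mathfrak{V}_{S_0}^*\Psi = \Psi$, i.e.\ $\mathfrak{V}_{S_0}T = \Psi$. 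Consequently
\begin{equation*}
    \|T_n - T\|_{\mathfrak{M}^{p,q}_m} = \|\mathfrak{V}_{S_0}T_n - \mathfrak{V}_{S_0}T\|_{L^{p,q}_m} = \|\mathfrak{V}_{S_0}T_n - \Psi\|_{L^{p,q}_m} \to 0,
\end{equation*}
which establishes completeness.

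The only subtle step is the identification $\mathfrak{V}_{S_0}T = \Psi$; without it the candidate $T = \mathfrak{V}_{S_0}^*\Psi$ could, a priori, have a strictly smaller STFT than $\Psi$ itself. The reproducing identity from \Cref{projidentity}, which tells us that $\Psi$ is fixed by the idempotent $\mathfrak{V}_{S_0}\mathfrak{V}_{S_0}^*$ because it is the $L^{p,q}_m$-limit of elements that are already fixed, is precisely what removes this ambiguity, and it is the part of the argument where the preparatory lemmas are genuinely doing the work.
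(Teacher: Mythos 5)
Your proof is correct and follows essentially the same route as the paper: transfer the Cauchy sequence to $L^{p,q}_m(\mathbb{R}^{2d};\mathcal{HS})$ via $\mathfrak{V}_{S_0}$, use completeness there, and pull the limit back with $\mathfrak{V}_{S_0}^*$ using \Cref{stftinvmod}. The paper compresses the final identification into the phrase ``by boundedness'', whereas you make it explicit by passing the reproducing identity of \Cref{projidentity} to the limit; this is a welcome elaboration of the same argument, not a different one.
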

\begin{proof}
We consider a Cauchy sequence $\{T_n\}_{n\in\mathbb{N}} \subset \mathfrak{M}^{p,q}_m$. The sequence $\{\mathfrak{V}_{S_0} T_n\}_{n\in\mathbb{N}}$ is a Cauchy sequence in $L^{p,q}_m(\mathbb{R}^{2d})$ by definition of the norm, and since $L^{p,q}_m(\mathbb{R}^{2d};\mathcal{HS})$ is a Banach space we denote the limit of this sequence $\Psi$. From \cref{stftinvmod} $\mathfrak{V}_{S_0}^* \Psi\in \mathfrak{M}^{p,q}_m$ and $T_n \to \mathfrak{V}_{S_0}^* \Psi$ by boundedness, so $\mathfrak{M}^{p,q}_m$ are Banach spaces. 

\end{proof}
As in the function case we have the embedding of our spaces:
\begin{claim}
For $1\leq p \leq p' \leq \infty$, $1\leq q \leq q' \leq \infty$, and $m(z)\geq m'(z)$ , $\mathfrak{M}^{p,q}_m \subset \mathfrak{M}^{p',q'}_{m'}$.
\end{claim}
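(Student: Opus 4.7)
The plan is to adapt the standard coorbit argument: first reduce the weight, then lift into a Wiener amalgam space via the reproducing formula, and finally exploit the fact that sequence spaces on a lattice nest upward in $p, q$.

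Since $m'(z) \leq m(z)$ pointwise, the elementary estimate $\|\Psi\|_{L^{p',q'}_{m'}} \leq \|\Psi\|_{L^{p',q'}_m}$ holds for every measurable $\mathcal{HS}$-valued $\Psi$. This immediately reduces the problem to the equal-weight inclusion $\mathfrak{M}^{p,q}_m \subset \mathfrak{M}^{p',q'}_m$, so I may assume $m = m'$ from here on.

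Next, I would use the reproducing identity $\mathfrak{V}_{S_0} T = \mathfrak{V}_{S_0} T \natural K$ with $K := \mathfrak{V}_{S_0} S_0$, which follows directly from \Cref{projidentity}. The kernel $K$ is uniformly bounded, namely $\|K(z)\|_{\mathcal{HS}} \leq \|S_0\|_{\mathcal{HS}}^2 = 1$ by unitarity of $\pi(z)$, and belongs to $L^1_v(\mathbb{R}^{2d};\mathcal{HS})$ since $S_0 \in \mathfrak{A}_v$; consequently it sits in $W(L^1_v(\mathbb{R}^{2d};\mathcal{HS}))$. The twisted convolution cocycle has modulus one, so taking $\mathcal{HS}$-norms yields the pointwise bound
\begin{align*}
\|\mathfrak{V}_{S_0} T(z)\|_{\mathcal{HS}} \leq \int_{\mathbb{R}^{2d}} \|K(z-z')\|_{\mathcal{HS}}\,\|\mathfrak{V}_{S_0} T(z')\|_{\mathcal{HS}}\,dz'.
\end{align*}
Pulling the essential supremum over a unit cube $[0,1]^{2d} + (k,l)$ past the integral bounds the Wiener amalgam sequence $a^{\mathfrak{V}_{S_0} T}_{(k,l)}$ by a discrete convolution of $a^K \in l^1_v$ against $a^{\mathfrak{V}_{S_0} T}$; the sequence-space analogue of \Cref{youngsineq} then gives $\mathfrak{V}_{S_0} T \in W(L^{p,q}_m(\mathbb{R}^{2d};\mathcal{HS}))$.

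Finally, since $p \leq p'$ and $q \leq q'$, the nesting $l^{p,q}_{\tilde m}(\mathbb{Z}^{2d}) \subset l^{p',q'}_{\tilde m}(\mathbb{Z}^{2d})$ of sequence spaces on the integer lattice yields $\mathfrak{V}_{S_0} T \in W(L^{p',q'}_m(\mathbb{R}^{2d};\mathcal{HS}))$, and the continuous inclusion $W(L^{p',q'}_m) \subset L^{p',q'}_m$ (obtained by the pointwise bound $\int_{Q_{k,l}} \|\Psi\|^{p'} \leq (a^\Psi_{k,l})^{p'}$ on each cube) delivers $T \in \mathfrak{M}^{p',q'}_m$. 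The main technical obstacle is the Wiener amalgam bootstrap, as this is the only place where the boundedness of $K$ is genuinely used; the argument is structurally identical to the scalar-valued proof indicated after \Cref{wieneramalg}, and the endpoint cases $p$ or $q = \infty$ require only replacing the relevant integrals with essential suprema throughout.
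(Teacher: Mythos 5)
Your argument is correct, but it follows a genuinely different route from the paper's. The paper disposes of the claim in two lines: from the reproducing formula $\mathfrak{V}_{S_0}T=\mathfrak{V}_{S_0}T\natural\mathfrak{V}_{S_0}S_0$ it deduces $\mathfrak{V}_{S_0}(\mathfrak{M}^{p,q}_m)\subset L^{p,q}_{m'}(\mathbb{R}^{2d};\mathcal{HS})\cap L^{\infty}_{m'}(\mathbb{R}^{2d};\mathcal{HS})$ and then invokes the inclusion of $L^{p,q}\cap L^{\infty}$ into $L^{p',q'}$. You instead bootstrap $\mathfrak{V}_{S_0}T$ into the Wiener amalgam space $W(L^{p,q}_m(\mathbb{R}^{2d};\mathcal{HS}))$ and exploit the nesting $l^{p,q}_{\tilde m}\subset l^{p',q'}_{\tilde m}$ of the discrete component; this is in substance the paper's own \Cref{amalgconv} (which it only proves later, in the atomic decomposition section) together with the elementary inclusion $W(L^{p',q'}_m)\subset L^{p',q'}_m$, and it mirrors Gr\"ochenig's proof of the modulation-space embeddings. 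Your route costs more machinery but is arguably the more careful one: the sequence-space nesting is immediate in both mixed indices, whereas the interpolation $L^{p,q}\cap L^{\infty}\subset L^{p',q'}$ used implicitly by the paper requires a little attention in the outer index $q$.

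One justification in your write-up is not valid as stated: uniform boundedness together with membership in $L^1_v$ does \emph{not} imply membership in $W(L^1_v)$ in general (a bounded integrable function can have divergent amalgam norm, e.g.\ one equal to $1$ on shrinking intervals near each integer). For the specific window $S_0=\varphi_0\otimes e_0$ the conclusion is nevertheless true, since $\|\mathfrak{V}_{S_0}S_0(z)\|_{\mathcal{HS}}=|V_{\varphi_0}\varphi_0(z)|$ is a Gaussian; this is exactly the observation the paper records just before \Cref{l1amalg}, and you should appeal to that computation rather than to the ``bounded plus integrable'' deduction. With that repair the proof is complete.
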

This follows from the reproducing formula for $\mathfrak{M}^{p,q}_m$ and the previous lemma; $\mathfrak{V}_{S_0} \mathfrak{V}_{S_0}^* \mathfrak{V}_{S_0} T (z) = S_0^*\pi(z)^*\mathfrak{V}_{S_0}^* \mathfrak{V}_{S_0} T$. Hence $\mathfrak{V}_{S_0}\mathfrak{M}^{p,q}_m \subset L^{p,q}_{m'}(\mathbb{R}^{2d};\mathcal{HS}) \cap L^{\infty}_{m'}(\mathbb{R}^{2d};\mathcal{HS})$ and the claim follows.

\subsection{Equivalent Norms} \label{equivnorms}
The twisted convolution structure can be used to show that as in the function case, different operators in $\mathfrak{M}^1_v$ generate the same $\mathfrak{M}^{p,q}_m$ spaces, with equivalent norms.
\begin{proposition}\label{equivnorms}
Given some $R \in \mathfrak{M}^1_v$, the space $\{T\in\mathcal{HS}: \mathfrak{V}_R T\in L^{p,q}_m(\mathbb{R}^{2d};\mathcal{HS})\}$ is equal to the space $\mathfrak{M}^{p,q}_m$, and the associated norms are equivalent.
\end{proposition}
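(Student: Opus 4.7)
The plan is to reduce the equivalence of windows to the single twisted convolution identity \eqref{twistedconv} combined with Young's inequality \Cref{youngsineq}. Specifically, I want to exploit the fact that $S_0$ is a fixed normalized rank-one reference window, and that $R \in \mathfrak{M}^1_v$ provides an $L^1_v$-kernel which can push information between the two transforms.

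First, I would specialize \eqref{twistedconv} in two ways. Taking $Q = R$ and $S = S_0$ yields
\begin{equation*}
    \mathfrak{V}_R T \,\natural\, \mathfrak{V}_{S_0} R \;=\; \langle R, R\rangle_{\mathcal{HS}}\, \mathfrak{V}_{S_0} T \;=\; \|R\|_{\mathcal{HS}}^2\, \mathfrak{V}_{S_0} T,
\end{equation*}
while taking $Q = S_0$ and $S = R$ (and using $\|S_0\|_{\mathcal{HS}} = 1$) yields
\begin{equation*}
    \mathfrak{V}_{S_0} T \,\natural\, \mathfrak{V}_R S_0 \;=\; \mathfrak{V}_R T.
\end{equation*}
These are the two "change-of-window" identities I need. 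Then \Cref{youngsineq} applied to the first identity gives
\begin{equation*}
    \|\mathfrak{V}_{S_0} T\|_{L^{p,q}_m} \;\leq\; \frac{C_{m,v}}{\|R\|_{\mathcal{HS}}^2} \|\mathfrak{V}_R T\|_{L^{p,q}_m} \|\mathfrak{V}_{S_0} R\|_{L^1_v},
\end{equation*}
so any $T$ in the right-hand space belongs to $\mathfrak{M}^{p,q}_m$ with a controlled norm. Applied to the second identity, it gives the reverse bound, provided $\mathfrak{V}_R S_0 \in L^1_v$.

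To check this last point, I would invoke \Cref{stftbasics}(1), which gives the pointwise identity $\|\mathfrak{V}_R S_0(z)\|_{\mathcal{HS}} = \|\mathfrak{V}_{S_0} R(-z)\|_{\mathcal{HS}}$. Since $v$ is sub-multiplicative of polynomial growth, it satisfies $v(-z) \asymp v(z)$ (which follows from $v(0) \leq v(z)v(-z)$ together with polynomial comparability), so $\|\mathfrak{V}_R S_0\|_{L^1_v} \asymp \|\mathfrak{V}_{S_0} R\|_{L^1_v}$, which is finite by hypothesis $R \in \mathfrak{M}^1_v$. Combining the two Young bounds yields
\begin{equation*}
    \|T\|_{\mathfrak{M}^{p,q}_m} \;\asymp\; \|\mathfrak{V}_R T\|_{L^{p,q}_m},
\end{equation*}
which is the claimed equivalence of norms.

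The main obstacle I foresee is justifying the twisted convolution identity at the level of generality needed: identity \eqref{twistedconv} was derived for $Q,R,S,T \in \mathcal{HS}$, and although the proposition as stated restricts $T$ to $\mathcal{HS}$, the right-hand sides $\mathfrak{V}_R T$ and $\mathfrak{V}_{S_0} T$ may lie only in $L^{p,q}_m$ with $p,q > 2$, where the convolutions are not a priori well-defined pointwise. I would handle this by first verifying that all operators appearing in the convolution belong to an $L^1_v$-weighted space where Bochner integration makes sense (using $\mathfrak{V}_R R, \mathfrak{V}_{S_0} R \in L^1_v$ and $\mathfrak{V}_{S_0} T \in L^{p,q}_m$), then invoking \Cref{youngsineq} to ensure the twisted convolutions converge in $L^{p,q}_m(\mathbb{R}^{2d};\mathcal{HS})$. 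The pointwise identity then extends from the dense $\mathcal{HS}$ case by a standard approximation/continuity argument using the boundedness of $\natural$.
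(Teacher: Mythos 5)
Your argument is essentially the paper's own proof: the same two specializations of \eqref{twistedconv} (one per inclusion), combined with \Cref{youngsineq} and with \Cref{stftbasics}(1) to pass between $\mathfrak{V}_R S_0$ and $\mathfrak{V}_{S_0} R$, and the paper likewise asserts without further detail that \eqref{twistedconv} remains valid beyond $\mathcal{HS}$ arguments. The only caveat is that $v(-z)\asymp v(z)$ does not actually follow from sub-multiplicativity plus polynomial growth alone (e.g.\ $v(x)=(1+\max(x,0))^N$ is a counterexample in one variable), but the paper relies on the same reflection symmetry implicitly when invoking \Cref{stftbasics}(1), so this is a shared standard convention on the weight rather than a gap in your argument.
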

\begin{proof}
 Given $R\in\mathfrak{M}^1_v$, and $T\in\mathfrak{M}^{p,q}_m$, we aim to show that $\mathfrak{V}_R T\in L^{p,q}_m(\mathbb{R}^{2d};\mathcal{HS})$. To that end we have from \Cref{youngsineq} that
\begin{align*}
    \|\mathfrak{V}_R T\|_{L^{p,q}_m} &= \frac{1}{\|S_0\|^2_{\mathcal{HS}}} \|\mathfrak{V}_{S_0} T \natural \mathfrak{V}_R S_0\|_{L^{p,q}_m} \\
    &\leq C_{v,m}\|\mathfrak{V}_{S_0} T\|_{L^{p,q}_m}  \|\mathfrak{V}_{S_0} R\|_{L^1_v} \\
    &< \infty.
\end{align*}
where $C_{v,m}$ is the $v$-moderate constant of $m$, and we have used \Cref{stftbasics}(i). We have also used the formula $\mathfrak{V}_Q T \natural \mathfrak{V}_S R(z) = \langle R, Q \rangle_{\mathcal{HS}} \mathfrak{V}_S T(z)$, which we initially defined only for $T\in\mathcal{HS}$. However examining the argument confirms we are justified in using this for general $T$. Hence $\mathfrak{M}^{p,q}_m \subset \{T\in\mathcal{HS}: \mathfrak{V}_R T\in L^{p,q}_m(\mathbb{R}^{2d};\mathcal{HS})\}$. Conversely, repeating the above argument with $T$ such that $\mathfrak{V}_R T\in L^{p,q}_m(\mathbb{R}^d)$ gives the reverse inclusion. Equivalence of norms is also clear from these symmetric arguments, namely
\begin{align*}
    \frac{\|R\|_{\mathcal{HS}}^2}{C_{v,m}\|R\|_{\mathfrak{M}^1_v}} \|T\|_{\mathfrak{M}^{p,q}_m} \leq \|\mathfrak{V}_R T\|_{L^{p,q}_m}  \leq \frac{C_{v,m}\|R\|_{\mathfrak{M}^1_v}}{\|S_0\|_{\mathcal{HS}}^2} \|T\|_{\mathfrak{M}^{p,q}_m}.
\end{align*}
\end{proof}

\begin{corollary}\label{admissableops}
    $\mathfrak{M}^1_v = \mathfrak{A}_v$. 
\end{corollary}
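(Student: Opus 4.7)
The plan is to prove the two inclusions separately. The inclusion $\mathfrak{M}^1_v \subseteq \mathfrak{A}_v$ falls straight out of the preceding equivalent-norms proposition: specialising that result to $R = S$ and $p = q = 1$, $m = v$, the hypothesis $S \in \mathfrak{M}^1_v$ is equivalent to $\mathfrak{V}_S S \in L^1_v(\mathbb{R}^{2d};\mathcal{HS})$, which is precisely the defining condition of $\mathfrak{A}_v$.

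For the reverse inclusion $\mathfrak{A}_v \subseteq \mathfrak{M}^1_v$, I would fix a nonzero $S \in \mathfrak{A}_v$ and exploit the twisted-convolution reproducing identity \eqref{twistedconv}, reading off the special case
\[
    \mathfrak{V}_S S \natural \mathfrak{V}_{S_0} D = \langle D, S\rangle_{\mathcal{HS}} \, \mathfrak{V}_{S_0} S,
\]
where $D \in \mathfrak{M}^1_v$ is an auxiliary operator to be chosen so that $\langle D, S\rangle \neq 0$. Granted such a $D$, the right-hand side is, up to a nonzero scalar, precisely $\mathfrak{V}_{S_0} S$; on the left, $\mathfrak{V}_S S \in L^1_v$ by the admissibility hypothesis and $\mathfrak{V}_{S_0} D \in L^1_v$ by $D \in \mathfrak{M}^1_v$. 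The vector-valued Young inequality \Cref{youngsineq} then yields
\[
    \|\mathfrak{V}_{S_0} S\|_{L^1_v} \leq \frac{C_v}{|\langle D, S\rangle|} \, \|\mathfrak{V}_S S\|_{L^1_v} \, \|\mathfrak{V}_{S_0} D\|_{L^1_v} < \infty,
\]
establishing $S \in \mathfrak{M}^1_v$.

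The only genuine subtlety is producing the auxiliary operator $D \in \mathfrak{M}^1_v$ with $\langle D, S\rangle_{\mathcal{HS}} \neq 0$. I would take $D$ of rank-one form $D = \psi \otimes e$ with $\psi \in M^1_v$ and $e \in L^2(\mathbb{R}^d)$; such a tensor lies in the nuclear class $\mathcal{N}(L^2;M^1_v)$, and hence in $\mathfrak{M}^1_v$, by the embedding of nuclear operators noted in the preceding example. A direct trace computation gives $\langle D, S\rangle_{\mathcal{HS}} = \langle \psi, Se\rangle_{L^2}$, so it suffices to pick $e \in L^2$ with $Se \neq 0$ (possible since $S \neq 0$) and then $\psi \in M^1_v$ with $\langle \psi, Se\rangle \neq 0$, which is in turn possible by density of $M^1_v$ in $L^2(\mathbb{R}^d)$. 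The conceptual obstacle to watch for is precisely the temptation to plug in $D = S_0$ here — the pairing $\langle S_0, S\rangle$ may well vanish for a particular $S$ — so the argument relies on the freedom, guaranteed by the equivalent-norms proposition, to use any element of $\mathfrak{M}^1_v$ as the auxiliary operator.
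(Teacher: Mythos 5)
Your proof is correct, and for the substantive half of the statement it is actually more careful than the paper's own argument. The paper disposes of the corollary in one line, ``$T \in \mathfrak{M}^1_v \iff \mathfrak{V}_T T \in L^1_v \iff T \in \mathfrak{A}_v$'', where the second equivalence is definitional and the first is an appeal to the equivalent-norms proposition. That proposition, however, is stated for windows $R$ already known to lie in $\mathfrak{M}^1_v$, so it only delivers the inclusion $\mathfrak{M}^1_v \subseteq \mathfrak{A}_v$ directly; the reverse direction is exactly where a naive application becomes circular, since the estimate $\|\mathfrak{V}_S S \natural \mathfrak{V}_{S_0}S\|_{L^1_v} \leq C\|\mathfrak{V}_S S\|_{L^1_v}\|\mathfrak{V}_{S_0}S\|_{L^1_v}$ bounds $\|\mathfrak{V}_{S_0}S\|_{L^1_v}$ by itself. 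Your auxiliary-operator device --- inserting a rank-one $D = \psi\otimes e$ with $\psi \in M^1_v$, $\langle \psi, Se\rangle \neq 0$, so that $\mathfrak{V}_S S \natural \mathfrak{V}_{S_0}D = \langle D,S\rangle\,\mathfrak{V}_{S_0}S$ with both convolution factors in $L^1_v$ by hypothesis --- is precisely the standard coorbit-theory resolution of this circularity, and your verification that such a $D$ exists (via $\langle D,S\rangle_{\mathcal{HS}} = \langle\psi,Se\rangle_{L^2}$ and density of $M^1_v$ in $L^2$) is sound. In short: the paper buys brevity at the cost of leaving the nontrivial inclusion implicit; your argument supplies the missing step and would be a worthwhile addition to the proof as written.
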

\begin{proof}
It is clear that $\mathfrak{M}^1_v \subset \mathfrak{A}_v$. The inclusion $\mathfrak{A}_v \subset \mathfrak{M}^1_v$ follows from a similar argument as above. Given some $T\in\mathfrak{A}_v$ such that $\langle S, T\rangle_{\mathcal{HS}} \neq 0$;
\begin{align*}
    \|\mathfrak{V}_{S_0} T\|_{L^1_v} \leq \frac{1}{\langle S_0, T\rangle_{\mathcal{HS}}}\|\mathfrak{V}_T T\|_{L^1_v} \|\mathfrak{V}_{S_0} S_0 \|_{L^1_v}.
\end{align*}
In the case that $\langle S, T\rangle_{\mathcal{HS}} = 0$, we simply take some $R\in\mathfrak{M}^1_v$ with $\langle S, R\rangle_{\mathcal{HS}} \neq 0$ and $\langle T, R\rangle_{\mathcal{HS}} \neq 0$ and repeat the above expansion twice with respect to $R$ to derive
\begin{align*}
    \|\mathfrak{V}_{S_0} T\|_{L^1_v} \leq \frac{1}{\langle T, S_0\rangle_{\mathcal{HS}}\langle R, T\rangle_{\mathcal{HS}}}\|\mathfrak{V}_T T\|_{L^1_v} \|\mathfrak{V}_{S_0} S_0 \|_{L^1_v}\|\mathfrak{V}_{R} R \|_{L^1_v}.
\end{align*}

\end{proof}

\begin{corollary}
    A Hilbert-Schmidt operator $S$ belongs to the space $\mathfrak{A}_v$ if and only if the following norm equivalence holds: 
    \begin{align}\label{equivnormfunc}
        \|S^*\pi(z)^*f\|_{L^{p,q}_m(\mathbb{R}^{2d};L^2)} \asymp \|f\|_{M^{p,q}_m}
    \end{align}
    For every $1\leq p,q \leq \infty$, $v$-multiplicative $m$.
\end{corollary}
\begin{proof}
The $M^{p,q}_m(\mathbb{R}^d)$ condition $\|V_{\varphi_0}f\|_{L^{p,q}_m(\mathbb{R}^{2d})} < \infty$ is equivalent to the $\mathfrak{M}^{p,q}_m$ condition $\|\mathfrak{V}_{S_0} (f \otimes \varphi_0)\|_{L^{p,q}_m(\mathbb{R}^{2d};\mathcal{HS})} < \infty$. From \Cref{equivnorms}, all $S\in\mathfrak{A}_v$ determine equivalent norms on these spaces. Conversely for any operator $S$ satisfying \Cref{equivnormfunc}, for all $1\leq p,q \leq \infty$ and all $v$-multiplicative $m$, satisfies $\|\mathfrak{V}_S (f \otimes \varphi_0)\|_{L^{p,q}_m(\mathbb{R}^{2d};\mathcal{HS})}<\infty$, and in particular $\|\mathfrak{V}_S S_0\|_{L^1_v(\mathbb{R}^{2d};\mathcal{HS})} < \infty$, so $S$ must be in $\mathfrak{A}_v$ by \cref{admissableops}.
\end{proof}

\subsection{Duality}
To show the duality property $(\mathfrak{M}^{p,q}_m)' \cong \mathfrak{M}^{p',q'}_{1/m}$ where $\frac{1}{p} + \frac{1}{p'} = 1$ and similarly for $q$, we follow a similar approach to the function case proof in \cite{grochenigtfa}. We will however need a result of \cite{Gret72} for Lebesgue-Bochner spaces:
\begin{lemma}\label{bochnerdual}
For a Banach space $B$, and $\sigma$-finite measure space $(\Omega,\mathcal{A},\mu)$, $B$ has the Radon-Nikodym property (RNP) if and only if 
\begin{align*}
    L^p(\Omega;B)' = L^q(\Omega;B')
\end{align*}
with dual action 
\begin{align*}
    \langle a, a^* \rangle_{B,B'} = \int_{\Omega} a^*(a)\, d\mu
\end{align*}
where $\frac{1}{p}+\frac{1}{q} = 1$ for $1\leq p < \infty$.
\end{lemma}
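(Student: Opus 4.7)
The plan is to establish both directions of the equivalence, with the substantive content residing in the forward implication from RNP to the duality isomorphism. The ``easy'' embedding $L^q(\Omega;B') \hookrightarrow L^p(\Omega;B)'$ holds without any RNP assumption: for any $g \in L^q(\Omega;B')$, the expression
\begin{align*}
\phi_g(f) := \int_\Omega \langle f(\omega), g(\omega)\rangle_{B,B'}\, d\mu(\omega), \qquad f \in L^p(\Omega;B),
\end{align*}
defines a bounded functional on $L^p(\Omega;B)$ of norm at most $\|g\|_{L^q(\Omega;B')}$ by Hölder's inequality, and testing against simple functions of the form $\sum_i \chi_{A_i}\otimes b_i$ produces the matching lower bound, giving an isometric injection.

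For the surjectivity, where the RNP hypothesis enters, I would take $\phi \in L^p(\Omega;B)'$ and on the ring of finite-measure sets define a $B'$-valued set function by
\begin{align*}
\langle b, F_\phi(A)\rangle_{B,B'} := \phi(\chi_A \otimes b), \qquad b \in B,\ \mu(A) < \infty.
\end{align*}
Continuity of $\phi$ and dominated convergence in $L^p(\Omega;B)$ yield countable additivity of $F_\phi$ into $B'$ as well as absolute continuity with respect to $\mu$, while the pointwise estimate $|\phi(\chi_A \otimes b)| \leq \|\phi\|\,\mu(A)^{1/p}\|b\|_B$ controls the semi-variation on each finite-measure set. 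Invoking the vector-valued Radon--Nikodym theorem under the RNP hypothesis produces a strongly measurable density $g:\Omega \to B'$ with $F_\phi(A) = \int_A g\, d\mu$. Linearity and density of simple functions extend the identity $\phi(f) = \int_\Omega \langle f, g\rangle\, d\mu$ to all of $L^p(\Omega;B)$, and a duality calculation on the scalar $L^p$ spaces, combined with $\sigma$-finiteness to exhaust $\Omega$, upgrades the pointwise bound to $g \in L^q(\Omega;B')$ with $\|g\|_{L^q(\Omega;B')} = \|\phi\|$.

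The converse implication proceeds by contrapositive: given a suitable vector measure of bounded variation, absolutely continuous with respect to a finite positive measure and lacking a density, one builds from it a bounded functional on an appropriate $L^p(\Omega;B)$ whose representation under the assumed isomorphism would furnish the missing density, a contradiction. The main obstacle will be the RNP-dependent step in the forward direction, namely upgrading the weak-star information carried by $F_\phi$ to a strongly $\mu$-measurable density (via the Pettis measurability theorem together with RNP) and then promoting the finite-measure-set estimate to the sharp $L^q$ bound on all of $\Omega$. The restriction $p < \infty$ is essential to guarantee that simple functions are dense in $L^p(\Omega;B)$, and the $\sigma$-finiteness assumption is what permits the exhaustion arguments extending the density from sets of finite measure to $\Omega$ globally.
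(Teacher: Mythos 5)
The paper offers no proof of this lemma at all: it is imported as a known result from \cite{Gret72} (in the finite-measure case it is the classical duality theorem for Bochner spaces, Theorem IV.1.1 in Diestel--Uhl), so there is no internal argument to compare against. Your sketch reproduces the standard proof of that theorem, and the overall architecture --- isometric embedding by H\"older, representation of a functional by a $\mu$-continuous $B'$-valued set function, Radon--Nikodym to extract a density, exhaustion by $\sigma$-finiteness --- is the right one. Two steps, however, do not go through as written.

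First, the set function $F_\phi$ you build takes its values in $B'$, so the vector-valued Radon--Nikodym theorem must be invoked for $B'$: the hypothesis actually needed is that $B'$ has the RNP with respect to $\mu$, not $B$. Since RNP of $B$ does not imply RNP of $B'$ (e.g.\ $B=\ell^1$ has RNP while $B'=\ell^\infty$ does not, and $L^1(\Omega;\ell^1)'\neq L^\infty(\Omega;\ell^\infty)$ in general), the key step of your forward direction fails from the stated hypothesis; the same mismatch afflicts your contrapositive, which produces a density for a $B'$-valued measure and therefore establishes RNP of $B'$. This is arguably a defect of the lemma as transcribed in the paper rather than of your argument, and it is harmless in the application since $B=\mathcal{HS}$ is a Hilbert space, so $B\cong B'$ and both have RNP --- but a proof cannot paper over it silently. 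Second, the Radon--Nikodym theorem for vector measures requires bounded \emph{variation}, whereas the pointwise estimate $\|F_\phi(A)\|_{B'}\le\|\phi\|\,\mu(A)^{1/p}$ only yields bounded semivariation; summing it over a fine partition loses control when $p>1$. The correct route is to bound the variation directly: for a disjoint partition $\{A_i\}$ of $A$ and near-norming $b_i\in B$ with $\|b_i\|_B\le 1$ and unimodular scalars $\epsilon_i$, one has $\sum_i|\langle b_i,F_\phi(A_i)\rangle| = |\phi(\sum_i\epsilon_i\,\chi_{A_i}\otimes b_i)|\le\|\phi\|\,\mu(A)^{1/p}$, because disjointness makes the $L^p(\Omega;B)$ norm of the test function equal to $(\sum_i\mu(A_i))^{1/p}$. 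With these two repairs your outline is the standard proof.
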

Since $\mathcal{HS}$ has the RNP this gives that $\big(L^{p,q}_{m}(\mathbb{R}^{2d};\mathcal{HS})\big)' \cong L^{p',q'}_{1/m}(\mathbb{R}^{2d};\mathcal{HS})$, with the dual action $\langle \Psi, \Phi\rangle_{L^{p,q}_{m},L^{p',q'}_{1/m}} = \int_{\mathbb{R}^{2d}} \langle \Psi(z), \Phi(z)\rangle_{\mathcal{HS}}\, dz$.
\begin{proposition} \label{dualspaces}
For $S\in \mathfrak{A}_v$ and $1\leq p < \infty$, we have the duality identity
\begin{align*}
    (\mathfrak{M}^{p,q}_m)' \cong \mathfrak{M}^{p',q'}_{1/m}
\end{align*}
with the dual action given by
\begin{align*}
    \langle T, R \rangle_{\mathfrak{M}^{p,q}_m,\mathfrak{M}^{p',q'}_{1/m}} = \int_{\mathbb{R}^{2d}} \langle \mathfrak{V}_S T(z), \mathfrak{V}_S R(z)\rangle_{\mathcal{HS}}\, dz.
\end{align*}
\end{proposition}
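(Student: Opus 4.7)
The plan is to mirror the classical duality proof for modulation spaces (e.g.\ Theorem~11.3.6 in Gröchenig), substituting the vector-valued Bochner duality of \Cref{bochnerdual} for the scalar $L^p$ duality.

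For the easy inclusion $\mathfrak{M}^{p',q'}_{1/m}\hookrightarrow(\mathfrak{M}^{p,q}_m)'$: given $R\in\mathfrak{M}^{p',q'}_{1/m}$, the map $T\mapsto\int\langle\mathfrak{V}_S T(z),\mathfrak{V}_S R(z)\rangle_{\mathcal{HS}}\,dz$ is bounded by Hölder's inequality in $L^{p,q}_m(\mathbb{R}^{2d};\mathcal{HS})$, combined with the norm equivalences $\|\mathfrak{V}_S T\|_{L^{p,q}_m}\asymp\|T\|_{\mathfrak{M}^{p,q}_m}$ and $\|\mathfrak{V}_S R\|_{L^{p',q'}_{1/m}}\asymp\|R\|_{\mathfrak{M}^{p',q'}_{1/m}}$ from the previous subsection. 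Nondegeneracy of this pairing follows from the injectivity of $\mathfrak{V}_S$ noted after the definition of $\mathfrak{M}^{p,q}_m$.

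For surjectivity I would argue as follows. By \Cref{projidentity} the operator $\mathfrak{V}_S\mathfrak{V}_S^*$ is a bounded projection on $L^{p,q}_m(\mathbb{R}^{2d};\mathcal{HS})$ whose range, by the reproducing formula, is exactly $\mathfrak{V}_S(\mathfrak{M}^{p,q}_m)$; hence $\mathfrak{V}_S$ embeds $\mathfrak{M}^{p,q}_m$ as a closed subspace. Given $\ell\in(\mathfrak{M}^{p,q}_m)'$, push it through $\mathfrak{V}_S^{-1}$ to this subspace, extend by Hahn--Banach to all of $L^{p,q}_m(\mathbb{R}^{2d};\mathcal{HS})$, and apply \Cref{bochnerdual} (iterated in the two $\mathbb{R}^d$ variables to cover the mixed-norm case) to represent the extension as integration against some $\Psi\in L^{p',q'}_{1/m}(\mathbb{R}^{2d};\mathcal{HS})$. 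Set $R:=\mathfrak{V}_S^*\Psi$, which lies in $\mathfrak{M}^{p',q'}_{1/m}$ by \Cref{stftinvmod}.

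It remains to verify $\ell(T)=\int\langle\mathfrak{V}_S T,\mathfrak{V}_S R\rangle_{\mathcal{HS}}\,dz$. We already know $\ell(T)=\int\langle\mathfrak{V}_S T,\Psi\rangle_{\mathcal{HS}}\,dz$, so using the reproducing identity $\mathfrak{V}_S T=\mathfrak{V}_S T\,\natural\,\mathfrak{V}_S S$ it suffices to establish the adjoint relation
\begin{align*}
    \int_{\mathbb{R}^{2d}}\langle F\,\natural\,\mathfrak{V}_S S,\Psi\rangle_{\mathcal{HS}}\,dz \;=\; \int_{\mathbb{R}^{2d}}\langle F,\Psi\,\natural\,\mathfrak{V}_S S\rangle_{\mathcal{HS}}\,dz
\end{align*}
for $F=\mathfrak{V}_S T$. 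I would prove this by Fubini and cyclicity of the trace, followed by the substitution $w=z-z'$ to expose the cocycle $c(z,z')=e^{-2\pi i x'(\omega-\omega')}$, and then invoke \Cref{stftbasics}(i) together with $\pi(-w)=e^{2\pi i w_1 w_2}\pi(w)^*$ to verify that $\mathfrak{V}_S S$ is its own involution under this pairing, i.e.\ the phases coming from $c(\cdot,\cdot)$ and from $(\mathfrak{V}_S S(-w))^*$ exactly cancel. Once this symmetry is in hand, the right-hand side equals $\int\langle F,\mathfrak{V}_S\mathfrak{V}_S^*\Psi\rangle_{\mathcal{HS}}\,dz=\int\langle\mathfrak{V}_S T,\mathfrak{V}_S R\rangle_{\mathcal{HS}}\,dz$, completing the identification.

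The principal obstacle is precisely this self-adjointness computation for twisted convolution against $\mathfrak{V}_S S$: one must carefully track the cocycle through the change of variables and check that the resulting phase factors combine with the Hermitian-type symmetry of $\mathfrak{V}_S S$ to give the mirror convolution on the other factor. A secondary issue is extending \Cref{bochnerdual} to the mixed-norm spaces $L^{p,q}_m$, which one handles by iterating the scalar duality in each of the two $\mathbb{R}^d$-coordinates; this requires $\mathcal{HS}$ and each intermediate $L^p(\mathbb{R}^d;\mathcal{HS})$ (for $1<p<\infty$) to have the Radon--Nikodym property, both of which hold thanks to reflexivity of $\mathcal{HS}$.
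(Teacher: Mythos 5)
Your argument is correct and reaches the same representation $R=\mathfrak{V}_S^*\Psi$ as the paper, but the surjectivity step is organised differently. The paper avoids Hahn--Banach entirely: since $\mathfrak{V}_S^*:L^{p,q}_m(\mathbb{R}^{2d};\mathcal{HS})\to\mathfrak{M}^{p,q}_m$ is bounded (\Cref{stftinvmod}), a functional $\sigma\in(\mathfrak{M}^{p,q}_m)'$ is simply precomposed with $\mathfrak{V}_S^*$ to give a functional on the \emph{whole} Bochner space, which is represented by some $\Theta\in L^{p',q'}_{1/m}(\mathbb{R}^{2d};\mathcal{HS})$ via \Cref{bochnerdual} and pulled back to $\theta=\mathfrak{V}_S^*\Theta$; the consistency check $\sigma(T)=(\sigma\circ\mathfrak{V}_S^*)(\mathfrak{V}_S T)=\int\langle\mathfrak{V}_S T,\Theta\rangle_{\mathcal{HS}}\,dz$ is then immediate from the reconstruction formula $\mathfrak{V}_S^*\mathfrak{V}_S=I$, with no independence-of-extension issue to address. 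You instead restrict $\ell$ to the closed subspace $\mathfrak{V}_S(\mathfrak{M}^{p,q}_m)$ and extend by Hahn--Banach, which is legitimate but is what forces you into the ``self-adjointness of $\natural\,\mathfrak{V}_S S$'' verification. That verification is correct, but you make it harder than necessary: the operator $\Psi\mapsto\Psi\natural\mathfrak{V}_S S=\mathfrak{V}_S\mathfrak{V}_S^*\Psi$ has integral kernel $K(z,z')=S^*\pi(z)^*\pi(z')S$ satisfying $K(z,z')^*=K(z',z)$, so its symmetry under the pairing follows from Fubini alone, with no cocycle or phase bookkeeping. Your observation that \Cref{bochnerdual} must be iterated over the two $\mathbb{R}^d$-coordinates to reach the mixed-norm case is a fair point that the paper glosses over; note also that both arguments implicitly require $q<\infty$ as well as $p<\infty$ for the Bochner duality to apply.
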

\begin{proof}
On the one hand, the inclusion $\mathfrak{M}^{p',q'}_{1/m} \subset (\mathfrak{M}^{p,q}_m)'$ is clear from Hölder's inequality for weighted mixed norm spaces;
\begin{align*}
    \Big| \int_{\mathbb{R}^{2d}} \langle \mathfrak{V}_S T(z), \mathfrak{V}_S R(z)\rangle_{\mathcal{HS}}\, dz \Big| \leq \|T\|_{\mathfrak{M}^{p,q}_m}\|R\|_{\mathfrak{M}^{p',q'}_{1/m}}.
\end{align*}
To demonstrate the converse, take $R\in (\mathfrak{M}^{p,q}_m)'$. The composition $\Tilde{R} := R\circ \mathfrak{V}_S^*$ then defines a functional on $L^{p,q}_m(\mathbb{R}^{2d};\mathcal{HS})$, by \Cref{stftinvmod}. There exists then some $\Theta(z) \in L^{p',q'}_{1/m}(\mathbb{R}^{2d};\mathcal{HS})$, due to \Cref{bochnerdual}, such that 
\begin{align*}
    \Tilde{R}(\Psi) = \int_{\mathbb{R}^{2d}} \langle \Psi(z), \Theta(z)\rangle_{\mathcal{HS}}\, dz
\end{align*}
for $\Psi\in L^{p,q}_m((\mathbb{R}^{2d};\mathcal{HS})$. From \Cref{stftinvmod} it follows that
\begin{align*}
    \mathfrak{V}_S^*\Theta=\int_{\mathbb{R}^{2d}}\pi(z) S \Theta(z)\, dz \in \mathfrak{M}^{p',q'}_{1/m}
\end{align*}
and we denote this element $\theta$. We then conclude by confirming that 
\begin{align*}
    \langle T, \theta \rangle_{\mathfrak{M}^{p,q}_m,\mathfrak{M}^{p',q'}_{1/m}} &= \int_{\mathbb{R}^{2d}} \langle \mathfrak{V}_S T(z), \mathfrak{V}_S\mathfrak{V}_S^*\Theta(z) \rangle_{\mathcal{HS}} dz \\
    &= \int_{\mathbb{R}^{2d}} \langle \mathfrak{V}_S T(z), \Theta(z) \rangle_{\mathcal{HS}} dz \\
    &= \Tilde{R}(\mathfrak{V}_S T) = R(T),
\end{align*}
i.e. that an arbitrary functional $R \in (\mathfrak{M}^{p,q}_m)'$ corresponds to an element $\theta\in\mathfrak{M}^{p',q'}_{1/m}$ with the dual action defined above. 
Thus we have shown the reverse inclusion of $(\mathfrak{M}^{p,q}_m)^* \subset \mathfrak{M}^{p',q'}_{1/m}$, and conclude $(\mathfrak{M}^{p,q}_m)' \cong \mathfrak{M}^{p',q'}_{1/m}$.
\end{proof}

\begin{remark}\label{Schatten behaviour}
\normalfont In examples so far of $\mathfrak{M}^{p,q}_m$ operators, we have considered the rank one case, where one retrieves the familiar functions in the $M^{p,q}_m$ spaces and their associated relations. However, the $\mathfrak{M}^{p,q}_m(\mathbb{R}^d)$ spaces can also be related to the Schatten properties of operators, as seen by the inclusions
\begin{align*}
    \mathcal{N}(L^2;M^1)\subseteq \mathfrak{M}^1 \subset \mathcal{S}^1 \subset \mathcal{HS} \subset \mathcal{L}(L^2) \subset \mathfrak{M}^{\infty} \subseteq \mathcal{L}(L^2;M^{\infty}).
\end{align*}

\end{remark}

In particular we have a Gelfand triple $\mathfrak{M}^1_m \subset \mathcal{HS} \subset \mathfrak{M}^{\infty}_{1/m}$, where the embeddings are continuous.

\subsection{Correspondence Principle for Operators}
Finally we can give a characterisation of the spaces in terms of a coorbit structure: 
\begin{theorem}
    For any $S\in \mathfrak{A}_v$ such that $\|S\|_{\mathcal{HS}}=1$, we have an isometric isomorphism
    \begin{align*}
        \mathfrak{M}^{p,q}_m := \{ T\in \mathfrak{S}': \mathfrak{V}_S T \in L^{p,q}_m(\mathbb{R}^{2d};\mathcal{HS}) \} \cong \{\Psi\in L^{p,q}_m(\mathbb{R}^{2d};\mathcal{HS}): \Psi = \Psi \natural \mathfrak{V}_S S\},
    \end{align*}
    under the mapping
    \begin{align*}
        T \mapsto \mathfrak{V}_S T.
    \end{align*}\todo{$\Psi$?}
\end{theorem}
\begin{proof}
The inclusion $\mathfrak{V}_S(\mathfrak{M}_S^p) \subset \{\Psi\in L^p(\mathbb{R}^{2d};\mathcal{HS}): \Psi = \Psi \natural \mathfrak{V}_S S\}$ follows from \Cref{projidentity}. It remains to show the converse. We have that $\Psi \natural \mathfrak{V}_S S = \mathfrak{V}_S\mathfrak{V}_S^* \Psi$ for any $\Psi\in L^{p,q}_m(\mathbb{R}^{2d};\mathcal{HS})$. Hence if $\Psi = \Psi \natural \mathfrak{V}_S S$, then $\Psi = \mathfrak{V}_S R$, where $R=\mathfrak{V}_S^* \Psi \in\mathfrak{M}^{p,q}_m$, since $\mathfrak{V}_S^*:L^{p,q}_m\to\mathfrak{M}^{p,q}_m$. We recall that $\mathfrak{V}_S$ is injective on $\mathfrak{M}^{p,q}_m$, and the isometry property follows simply as a result of definitions of $\mathfrak{M}^{p,q}_m$ norms for a normalised $S$. Hence we have the correspondence principle;
\begin{align*}
    \{ T\in \mathfrak{S}': \mathfrak{V}_S T \in L^{p,q}_m(\mathbb{R}^{2d};\mathcal{HS}) \} \cong \{\Psi\in L^{p,q}_m(\mathbb{R}^{2d};\mathcal{HS}): \Psi = \Psi \natural \mathfrak{V}_S S\},
\end{align*}
for any $S\in\mathfrak{A}_v$ with $\|S\|_{\mathcal{HS}}=1$. 

\end{proof}

\section{Atomic Decomposition}\label{atomdecomp}
Coorbit spaces were introduced as a means of giving atomic decompositions with respect to unitary representations, and are fundamental to the field of time-frequency analysis for this reason. It is therefore natural, once one has such spaces, to consider the resulting discretisation. In particular we are interested in the discretisation of the identity
\begin{align*}
    T = \mathfrak{V}_S^* \mathfrak{V}_S T
\end{align*}
for $T\in\mathfrak{M}^{p,q}_m$ and $S\in\mathfrak{A}_v$, and 
the g-frame condition
\begin{align}
    A\|T\|_{\mathcal{HS}} \leq \sum_{\lambda \in \Lambda} \|S^*\pi(\lambda)^*T\|_{\mathcal{HS}} \leq B\|T\|_{\mathcal{HS}}
\end{align}
for some lattice $\Lambda\subset \mathbb{R}^{2d}$. We then proceed to consider the corresponding statement for operator modulation spaces, and interpret this as the statement that for $T$ an operator with poor time-frequency concentration, in some $\mathfrak{M}^{p,q}_m$ for large $p,q$, we can nonetheless decompose $T$ into well localised operators in the above manner.

In \cite{skrett21}, a similar problem was considered, of the conditions for which decompositions of functions $\psi\in M^p_m(\mathbb{R}^d)$ of the form
\begin{align*}
    \sum_{\lambda\in\Lambda} \alpha_{\lambda}(SS^*)\psi
\end{align*}
converge in a given norm. In that work the primary operators of interest were those of the form $S\in M^1_v(\mathbb{R}^d) \otimes M^1_v(\mathbb{R}^d)$, although as discussed in Remark 7.8 of that work, the same results hold for operators $S = \sum_n f_n \otimes g_n$ where $\{g_n\}_n$ is an orthonormal system in $L^2(\mathbb{R}^{2d})$, and $\{f_n\}_n\subset M^1_v$, with the condition $\sum_n \|f_n\|_{M^1_v} < \infty$.

With the twisted convolution structure of our coorbit spaces already in place, atomic decomposition results can be derived in an almost identical way to the function case, as presented in chapter 12 of \cite{grochenigtfa}, with some slight changes to accommodate the operator setting, based on the Wiener Amalgam spaces defined in \cref{Wieneramalg}. We present the proofs here for completeness. %
\todo{Recall def of Wiener spaces?}
\begin{lemma} \label{amalgconv}
Given $G\in W(L^1_v(\mathbb{R}^{2d};\mathcal{HS}))$ and $F\in L^{p,q}_m(\mathbb{R}^{2d};\mathcal{HS})$ continuous functions, where $m$ is a $v$-moderate weight, we have
\begin{align*}
    \|F\natural G\|_{W(L^{p,q}_m)} \leq C \|F\|_{L^{p,q}_m} \|G\|_{W(L^1_v)}.
\end{align*}
\end{lemma}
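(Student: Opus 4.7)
The strategy is to reduce the vector-valued statement to the scalar analogue (cf.\ Proposition 11.1.5 of \cite{grochenigtfa}) by passing to Hilbert--Schmidt norms and then exploiting the unit-cube tiling used to define the amalgam norm. Since $|c(z,z')|=1$ and $\|AB\|_{\mathcal{HS}}\leq\|A\|_{\mathcal{HS}}\|B\|_{\mathcal{HS}}$, the Bochner triangle inequality applied to the definition of $\natural$ yields the pointwise scalar bound
\begin{equation*}
\|F\natural G(z)\|_{\mathcal{HS}} \;\leq\; \int_{\mathbb{R}^{2d}} \|G(z-z')\|_{\mathcal{HS}}\|F(z')\|_{\mathcal{HS}}\,dz' \;=\; (\phi \ast \gamma)(z),
\end{equation*}
where $\phi(z):=\|F(z)\|_{\mathcal{HS}}$, $\gamma(z):=\|G(z)\|_{\mathcal{HS}}$, and $\ast$ is ordinary convolution on $\mathbb{R}^{2d}$. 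Because the amalgam norm of a vector-valued function coincides with the amalgam norm of its Hilbert--Schmidt magnitude, the claim reduces to the scalar inequality $\|\phi \ast \gamma\|_{W(L^{p,q}_m)} \leq C \|\phi\|_{L^{p,q}_m}\|\gamma\|_{W(L^1_v)}$.

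For the scalar inequality I would split the convolution integral over the unit cubes $Q_j:=j+[0,1]^{2d}$ and dominate $\gamma$ on $Q_j$ by its local sup $a^\gamma_j$. Keeping track of the fact that $u-y \in -j + [-1,1]^{2d}$ when $u\in[0,1]^{2d}$ and $y\in Q_j$, the standard bookkeeping yields the discrete-convolution majorant
\begin{equation*}
a^{\phi\ast\gamma}_k \;\leq\; C_d \sum_{j\in\mathbb{Z}^{2d}} a^\gamma_j\,\phi^\#_{k-j}, \qquad \phi^\#_k := \int_{k+[-1,1]^{2d}} \phi(w)\,dw,
\end{equation*}
for a dimension-only constant $C_d$. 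The weighted discrete Young inequality on $l^{p,q}_m$, valid under $v$-moderateness of $m$ by the same argument as \Cref{youngsineq}, then gives
\begin{equation*}
\|a^{\phi\ast\gamma}\|_{l^{p,q}_m} \;\leq\; C_d\, C_{v,m}\, \|a^\gamma\|_{l^1_v}\,\|\phi^\#\|_{l^{p,q}_m}.
\end{equation*}

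Finally, since $m$ is $v$-moderate with $v$ of polynomial growth, $m$ is comparable to $m(k)$ on each translate $k+[-1,1]^{2d}$, so Hölder's inequality on each cube followed by the mixed-norm summation produces the sampling-type estimate $\|\phi^\#\|_{l^{p,q}_m} \leq C \|\phi\|_{L^{p,q}_m}$. Recognising $\|a^\gamma\|_{l^1_v} = \|G\|_{W(L^1_v)}$ and $\|\phi\|_{L^{p,q}_m} = \|F\|_{L^{p,q}_m}$, one chains the estimates to obtain the claim. The main technical point is the discrete bookkeeping argument that dominates a local sup of a continuous convolution by a discrete convolution of local sups and local averages; once the initial scalar reduction is in place, the argument is identical to the scalar case in \cite{grochenigtfa}, with the Hilbert--Schmidt setting contributing nothing beyond submultiplicativity of $\|\cdot\|_{\mathcal{HS}}$ under composition.
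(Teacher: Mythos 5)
Your argument is correct and follows essentially the same route as the paper: both reduce the estimate to scalar Hilbert--Schmidt magnitudes, use the unit-cube decomposition underlying the amalgam norm, and close with a (weighted, mixed-norm) Young inequality. The only organisational difference is that you scalarise first via the pointwise bound $\|F\natural G(z)\|_{\mathcal{HS}}\leq(\phi\ast\gamma)(z)$ and then run the discrete-convolution bookkeeping, whereas the paper first replaces $G$ by a dominating step function $\sum_{\lambda}S_{\lambda}\chi_{\Omega_{\lambda}}$ and reduces to estimating $F\natural\chi_{\Omega_0}$ — your version even avoids needing the supremum of $\|G\|_{\mathcal{HS}}$ to be attained on each cube.
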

\begin{proof}
We construct the function $G_s(z)=\sum_{\lambda\in\Lambda} S_{\lambda}\cdot \chi_{\Omega_{\lambda}}(z)$, where $\Omega_{\lambda} = \lambda + [0,1]^{2d}$ and $S_{\lambda}$ is a value of $G$ in $\Omega_{\lambda}$ which maximises $\|G(z)\|_{\mathcal{HS}}$, which exists since $G$ is assumed to be continuous. Then $\|G(z)\|_{\mathcal{HS}} \leq \|G_s(z)\|_{\mathcal{HS}}$ and $\|G\|_{W(L^1_v)} = \|G_s\|_{W(L^1_v)}$. We then have
\begin{align*}
    \|F\natural G\|_{W(L^{p,q}_m)} &\leq \sum_{\lambda\in\Lambda} \|S_{\lambda}\|_{\mathcal{HS}} \| F\natural T_{\lambda} \chi_{\Omega_0}\|_{W(L^{p,q}_m)} \\
    &\leq \sum_{\lambda\in\Lambda} v(\lambda)\|S_{\lambda}\|_{\mathcal{HS}} \| F\natural \chi_{\Omega_0}\|_{W(L^{p,q}_m)} \\
    &= \| F\natural \chi_{\Omega_0}\|_{W(L^{p,q}_m)} \|G\|_{W(L^1_v)}.
\end{align*}
We abuse notation here by taking the twisted convolution of a vector valued and scalar valued function, but we interpret $F\natural \chi_{\Omega_0}(z)$ simply as $\int_{z-\Omega_0} F(z')c(z,z')dz'$. We also comment that while $S_{\lambda}$ may not be the value of $G$ maximising $F\natural G$, it nonetheless provides the upper bound in the first line. We consider the sequence
\begin{align*}
    a_{\lambda} &= \esssup_{z\in\Omega_{\lambda}} \|F\natural \chi_{\Omega_0}(z+\lambda)\|_{\mathcal{HS}} \\
    &\leq \esssup_{z\in\Omega_{\lambda}} \int_{z + \lambda-\Omega_0} \|F(z')\|_{\mathcal{HS}}dz' \\
    &\leq \int_{\lambda-\Tilde{\Omega}_0} \|F(z')\|_{\mathcal{HS}}dz' \\
    &= (\|F\|_{\mathcal{HS}}*\chi_{\Tilde{\Omega}_0})(z + \lambda)
\end{align*}
where $\Tilde{\Omega}_0 = [-1,1]^{2d}$, and $\|F\|_{\mathcal{HS}}$ is considered a scalar valued $L^{p,q}_m$ function. Moreover, we see that  $a_{\lambda}\chi_{\Omega_{\lambda}}(z) \leq (\|F\|_{\mathcal{HS}}*\chi_{\Check{\Omega}_0})(\lambda+z)$ for $z\in [0,1]^{2d}$, where here $\Check{\Omega}_0=[-2,2]^{2d}$, and so 
\begin{align*}
    \sum_{\lambda\in\Lambda} a_{\lambda}\chi_{\Omega_{\lambda}}(z) \leq (\|F\|_{\mathcal{HS}}*\chi_{\Check{\Omega}_0})(z).
\end{align*}
Finally we conclude
\begin{align*}
    \| F\natural \chi_{\Omega_0}\|_{W(L^{p,q}_m)} &= \|a\|_{l^{p,q}_m} \\
    &\leq C^\prime\|\sum_{\lambda\in\Lambda} a_{\lambda}\chi_{\Omega_{\lambda}} \|_{L^{p,q}_m} \\
    &\leq C^\prime\| \|F\|_{\mathcal{HS}}*\chi_{\Check{\Omega}_0} \|_{L^{p,q}_m} \\
    &\leq C\|F\|_{L^{p,q}_m} \|\chi_{\Check{\Omega}_0}\|_{L^1_v}
\end{align*}
where we have used Young's inequality for mixed norm spaces in the last line. The claim follows.
\end{proof}
\todo{Since, for $\varphi_0$....we have}In the function case, $V_{\varphi_0} \varphi_0 \in W(L^1_v(\mathbb{R}^{2d}))$, from which it follows that $\mathfrak{V}_{S_0} S_0 \in W(L^1_v(\mathbb{R}^{2d};\mathcal{HS}))$, since $\|\mathfrak{V}_{S_0} S_0(z)\|_{\mathcal{HS}}=|V_{\varphi_0} \varphi_0(z)|$.
\begin{corollary}\label{l1amalg}\label{atomicwiener}
    If $T\in \mathfrak{M}^{p,q}_m$ and $S\in\mathfrak{A}_v$, then $\mathfrak{V}_S T \in W(L^{p,q}_m(\mathbb{R}^{2d};\mathcal{HS}))$.
\end{corollary}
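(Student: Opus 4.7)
The plan is to deduce the corollary from the reproducing formula combined with \Cref{amalgconv} and \Cref{l1amalg}, in direct parallel with the derivation of \Cref{l1amalg} itself. Concretely, for normalized $S\in\mathfrak{A}_v$ the correspondence principle proven above yields the identity
\begin{align*}
    \mathfrak{V}_S T = \mathfrak{V}_S T \natural \mathfrak{V}_S S,
\end{align*}
as elements of $L^{p,q}_m(\mathbb{R}^{2d};\mathcal{HS})$; the non-normalized case is handled by the same identity up to the scalar $\|S\|_{\mathcal{HS}}^{-2}$ using \eqref{twistedconv}. This immediately suggests applying \Cref{amalgconv} with $F=\mathfrak{V}_S T \in L^{p,q}_m$ and $G=\mathfrak{V}_S S \in W(L^1_v)$, where the latter membership is precisely the content of \Cref{l1amalg} applied to $S\in\mathfrak{A}_v$.

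Before invoking \Cref{amalgconv} I need to verify the continuity hypothesis for both factors. For $\mathfrak{V}_S S$ this is straightforward: writing $\mathfrak{V}_S S(z)-\mathfrak{V}_S S(z')=S^*(\pi(z)^*-\pi(z')^*)S$ and using that $\pi(z)$ is strongly continuous on $L^2$ together with $S\in\mathcal{HS}$ gives $\|(\pi(z)^*-\pi(z')^*)S\|_{\mathcal{HS}}\to 0$ as $z\to z'$, and multiplication by the bounded operator $S^*$ preserves this convergence. For $\mathfrak{V}_S T$ with $T\in\mathfrak{M}^{p,q}_m$, I would exploit the reproducing formula a second time: expressing $\mathfrak{V}_S T(z) = \int K(z,z')\mathfrak{V}_S T(z')\,dz'$ for the kernel $K(z,z')=S^*\pi(z)^*\pi(z')S$, which is jointly norm-continuous in $\mathcal{HS}$ by the same argument as for $\mathfrak{V}_S S$, and then invoking dominated convergence against the $L^{p,q}_m$ density $\mathfrak{V}_S T$.

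With continuity in hand, \Cref{amalgconv} yields
\begin{align*}
    \|\mathfrak{V}_S T\|_{W(L^{p,q}_m(\mathbb{R}^{2d};\mathcal{HS}))} = \|\mathfrak{V}_S T \natural \mathfrak{V}_S S\|_{W(L^{p,q}_m)} \leq C\,\|\mathfrak{V}_S T\|_{L^{p,q}_m}\,\|\mathfrak{V}_S S\|_{W(L^1_v)}.
\end{align*}
The first factor on the right is finite by the equivalent-norms proposition, which ensures that $\mathfrak{V}_S T\in L^{p,q}_m(\mathbb{R}^{2d};\mathcal{HS})$ for any window $S\in\mathfrak{A}_v$, not just the reference window $S_0$. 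The second factor is finite by \Cref{l1amalg}. Combining the two estimates gives the stated conclusion.

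The main obstacle I anticipate is the continuity verification for $\mathfrak{V}_S T$, since a priori $T$ only lives in the distribution space $\mathfrak{S}'$ and pointwise values of $\mathfrak{V}_S T$ are only defined almost everywhere as elements of $L^{p,q}_m(\mathbb{R}^{2d};\mathcal{HS})$. The bootstrap via the reproducing formula circumvents this by producing a continuous representative of the equivalence class, at which point the Wiener amalgam estimate is a mechanical application of the preceding lemma.
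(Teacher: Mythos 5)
Your argument is correct and is essentially the paper's own proof: the paper derives this corollary directly from \Cref{amalgconv} and \Cref{l1amalg} via the reproducing identity $\mathfrak{V}_S T = \mathfrak{V}_S T \natural \mathfrak{V}_S S$ (up to normalisation), exactly as you do. Your additional verification of the continuity hypotheses and the use of the equivalent-norms proposition to justify $\mathfrak{V}_S T\in L^{p,q}_m$ for a general window $S\in\mathfrak{A}_v$ are details the paper leaves implicit, and they are handled correctly.
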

\begin{proof}
    From \cref{amalgconv}, for any $S\in \mathfrak{A}_v$, $\mathfrak{V}_{S_0} S\in W(L^1_v(\mathbb{R}^{2d};\mathcal{HS}))$. By then considering $\mathfrak{V}_{S_0} S \natural \mathfrak{V}_S S_0$ in the equation \eqref{twistedconv}, it follows that $\mathfrak{V}_S S\in W(L^1_v(\mathbb{R}^{2d};\mathcal{HS}))$ again by \cref{amalgconv}. The corollary then follows for general $T\in \mathfrak{M}^{p,q}_m$ from \Cref{youngsineq} and \cref{amalgconv}.
\end{proof}
With these preliminaries the boundedness of the analysis operator follows painlessly as in the function case presented in \cite{grochenigtfa};
\begin{proposition}
    For $S\in\mathfrak{A}_v$, the analysis operator $C_S: \mathfrak{M}^{p,q}_{{m}}\to l^{p,q}_{\Tilde{m}}(\Lambda,\mathcal{HS})$, defined by 
    \begin{align*}
        C_S(T) = \{S^*\pi(\lambda)^*T\}_{\lambda\in\Lambda},
    \end{align*}
    is a bounded operator with norm 
    \begin{align*}
        \|C_S\| \leq C\|\mathfrak{V}_S S\|_{W(L^1_v)},
    \end{align*}
    where the constant $C$ depends only on the lattice $\Lambda$ and weight $v(z)$
\end{proposition}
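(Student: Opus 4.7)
The key observation is that the analysis operator is precisely the restriction of the operator STFT to the lattice: $C_S(T) = \{\mathfrak{V}_S T(\lambda)\}_{\lambda\in\Lambda} = \mathfrak{V}_S T|_\Lambda$. The plan is therefore to combine the Wiener amalgam membership of $\mathfrak{V}_S T$ established in the corollary just above with the sampling estimate in \Cref{wieneramalg}, and then track the constants back to $\|\mathfrak{V}_S S\|_{W(L^1_v)}$.

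First I would invoke the corollary preceding this proposition, which tells us that for $T\in\mathfrak{M}^{p,q}_m$ and $S\in\mathfrak{A}_v$ we have $\mathfrak{V}_S T \in W(L^{p,q}_m(\mathbb{R}^{2d};\mathcal{HS}))$. Tracing the proof of that corollary gives the quantitative bound: applying the reproducing identity \eqref{twistedconv} in the form $\mathfrak{V}_S T = \|S\|_{\mathcal{HS}}^{-2}\,\mathfrak{V}_S T \natural \mathfrak{V}_S S$, and then \Cref{amalgconv}, yields
\begin{align*}
    \|\mathfrak{V}_S T\|_{W(L^{p,q}_m)} \leq C\,\|\mathfrak{V}_S T\|_{L^{p,q}_m}\,\|\mathfrak{V}_S S\|_{W(L^1_v)}.
\end{align*}
Next, by the equivalence of norms proved earlier (all $S\in\mathfrak{A}_v=\mathfrak{M}^1_v$ define equivalent norms on $\mathfrak{M}^{p,q}_m$), we have $\|\mathfrak{V}_S T\|_{L^{p,q}_m} \leq C'\|T\|_{\mathfrak{M}^{p,q}_m}$.

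Then I would apply \Cref{wieneramalg} to sample the continuous $\mathcal{HS}$-valued function $\mathfrak{V}_S T$ on $\Lambda$:
\begin{align*}
    \|C_S T\|_{l^{p,q}_{\tilde m}(\Lambda;\mathcal{HS})} = \|\mathfrak{V}_S T|_\Lambda\|_{l^{p,q}_{\tilde m}(\Lambda;\mathcal{HS})} \leq c\,\|\mathfrak{V}_S T\|_{W(L^{p,q}_m)}.
\end{align*}
Chaining these three estimates gives $\|C_S T\|_{l^{p,q}_{\tilde m}} \leq C\,\|\mathfrak{V}_S S\|_{W(L^1_v)}\,\|T\|_{\mathfrak{M}^{p,q}_m}$, which is exactly the claimed bound.

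The only point that needs a small technical check is the continuity hypothesis of \Cref{wieneramalg}. For $T\in\mathcal{HS}$, continuity of $z\mapsto S^*\pi(z)^*T$ in $\mathcal{HS}$-norm follows from strong continuity of $\pi$ together with approximation of $T$ by finite rank operators. For general $T\in\mathfrak{M}^{p,q}_m\subset \mathfrak{S}'$, continuity is inherited via the identity $\mathfrak{V}_S T = \|S_0\|_{\mathcal{HS}}^{-2}\,\mathfrak{V}_{S_0}T \natural \mathfrak{V}_S S_0$: the convolution of an $L^{p,q}_m$ function with the $W(L^1_v)$ function $\mathfrak{V}_S S_0$ is continuous by the standard argument for amalgam convolutions. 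This is the only step that is not immediate from the cited statements, but it is routine; no novel obstacle arises since all the heavy machinery (amalgam convolution, norm equivalence, sampling) is already in place.
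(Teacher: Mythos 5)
Your proposal is correct and follows essentially the same route as the paper's proof: identify $C_S(T)$ with the lattice restriction of $\mathfrak{V}_S T$, bound $\|\mathfrak{V}_S T\|_{W(L^{p,q}_m)}$ by $C\|\mathfrak{V}_S S\|_{W(L^1_v)}\|T\|_{\mathfrak{M}^{p,q}_m}$ via the reproducing formula and \Cref{amalgconv}, and then apply the sampling estimate of \Cref{wieneramalg}. Your explicit tracking of constants and the remark on continuity of $\mathfrak{V}_S T$ only make explicit what the paper leaves implicit.
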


\begin{proof}
By \cref{atomicwiener}, $\mathfrak{V}_S S \in W(L^1_v(\mathbb{R}^{2d};\mathcal{HS}))$. Since $\mathfrak{V}_S T$ is continuous, we have from \cref{atomicwiener} and \Cref{wieneramalg} that
\begin{align*}
     \|C_S(T)\|_{l^{p,q}_{\Tilde{m}}(\Lambda,\mathcal{HS})} &= \|\mathfrak{V}_S T|_{\Lambda}\|_{l^{p,q}_{\Tilde{m}}(\Lambda,\mathcal{HS})} \\
     &\leq C' \|\mathfrak{V}_S T\|_{W(L^{p,q}_m)} \\
     &\leq C \|\mathfrak{V}_S S\|_{W(L^1_v)} \|T\|_{\mathfrak{M}^{p,q}_m}.
\end{align*}
\end{proof}
On the other hand, we find that the synthesis operator is similarly bounded, again in the same manner as the function case of \cite{grochenigtfa}:
\begin{proposition}
For $S\in\mathfrak{A}_v$, the synthesis operator $D_S: l^{p,q}_{\Tilde{m}}(\Lambda,\mathcal{HS}) \to \mathfrak{M}^{p,q}_{{m}}$, defined by 
    \begin{align*}
        D_S((T_{\lambda})_{\lambda\in\Lambda}) = \sum_{\lambda\in\Lambda} \pi(\lambda)S T_{\lambda}
    \end{align*}
is a bounded operator with norm
\begin{align*}
    \|D_S\| \leq C \|\mathfrak{V}_S S\|_{W(L^1_v)}.
\end{align*}
Convergence is interpreted to be unconditional for $p,q< \infty$, otherwise weak*, and the constant $C$ depends only on the lattice $\Lambda$ and weight $v(z)$.
\end{proposition}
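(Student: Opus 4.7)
The plan is to combine a direct computation of the operator STFT applied to $D_S((T_\lambda))$ with the Wiener amalgam convolution estimate already developed for the analysis operator. First, restrict attention to a finitely supported sequence $(T_\lambda) \in l^{p,q}_{\Tilde{m}}(\Lambda; \mathcal{HS})$, so that $D_S((T_\lambda)) = \sum_\lambda \pi(\lambda) S T_\lambda$ is a finite sum of Hilbert-Schmidt operators and lies in $\mathcal{HS}$. Applying $\mathfrak{V}_S$ and using the cocycle identity $\pi(z)^*\pi(\lambda) = c(z,\lambda)\pi(\lambda-z)^*$ yields
\begin{align*}
    \mathfrak{V}_S D_S((T_\lambda))(z) = \sum_{\lambda \in \Lambda} c(z,\lambda)\, \mathfrak{V}_S S(\lambda-z)\, T_\lambda,
\end{align*}
and since $|c(z,\lambda)|=1$ together with $\|AB\|_{\mathcal{HS}}\leq \|A\|_{\mathcal{HS}}\|B\|_{\mathcal{HS}}$, we obtain the pointwise bound
\begin{align*}
    \|\mathfrak{V}_S D_S((T_\lambda))(z)\|_{\mathcal{HS}} \leq \sum_{\lambda\in\Lambda} \|\mathfrak{V}_S S(\lambda-z)\|_{\mathcal{HS}}\, \|T_\lambda\|_{\mathcal{HS}}.
\end{align*}

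The next step is to dominate this discrete sum by a continuous convolution so that Young's inequality can be invoked. Introducing the step function $F(z) = \sum_\lambda \|T_\lambda\|_{\mathcal{HS}} \chi_{\Omega_\lambda}(z)$ with $\Omega_\lambda = \lambda + [0,1]^{2d}$, we have $\|F\|_{L^{p,q}_m} \asymp \|(T_\lambda)\|_{l^{p,q}_{\Tilde{m}}}$. Using that $\mathfrak{V}_S S \in W(L^1_v(\mathbb{R}^{2d};\mathcal{HS}))$ by \Cref{l1amalg}, the same local-supremum trick used in \Cref{amalgconv} shows that $\|\mathfrak{V}_S S(\lambda-\cdot)\|_{\mathcal{HS}}$ is dominated on $\Omega_\lambda$ by a translate of a fixed scalar function $G \in L^1_v$ whose $L^1_v$-norm is controlled by $\|\mathfrak{V}_S S\|_{W(L^1_v)}$. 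Consequently the discrete sum is bounded pointwise by the ordinary convolution $(G*F)(z)$, and the weighted mixed-norm Young inequality (the scalar version of \Cref{youngsineq}) delivers
\begin{align*}
    \|\mathfrak{V}_S D_S((T_\lambda))\|_{L^{p,q}_m} \leq C\, \|\mathfrak{V}_S S\|_{W(L^1_v)}\, \|(T_\lambda)\|_{l^{p,q}_{\Tilde{m}}},
\end{align*}
which is the desired norm bound, for finitely supported inputs.

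To extend to arbitrary sequences and address convergence, I would proceed as follows. For $1 \leq p, q < \infty$, finitely supported sequences are dense in $l^{p,q}_{\Tilde{m}}(\Lambda;\mathcal{HS})$, so the estimate above extends $D_S$ by continuity; applying it to tails of the defining series gives unconditional convergence in $\mathfrak{M}^{p,q}_m$. For the endpoint cases in which $p$ or $q$ equals $\infty$, a direct trace-pairing computation reveals that $D_S$ is the Banach-space adjoint of the analysis operator $C_S : \mathfrak{M}^{p',q'}_{1/m} \to l^{p',q'}_{1/\Tilde{m}}$ through the duality $(\mathfrak{M}^{p',q'}_{1/m})' \cong \mathfrak{M}^{p,q}_m$ established earlier, so both the boundedness and the weak-$*$ convergence of partial sums transfer from the proposition on $C_S$.

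The main obstacle is the discrete-to-continuous lifting in the middle step: one needs to verify carefully that the pointwise domination of the discrete sum by a continuous convolution is compatible with the $v$-moderate weighting of $m$ and with the vector-valued setting, so that $\|\mathfrak{V}_S S\|_{W(L^1_v)}$ really is the constant appearing in the final estimate. Once this Wiener-amalgam bookkeeping is settled, the rest of the argument is essentially routine since all the analytic tools—the Young inequality \Cref{youngsineq}, the sampling estimate \Cref{wieneramalg}, and the amalgam membership of $\mathfrak{V}_S S$—have already been developed.
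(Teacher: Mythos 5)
Your proposal is correct and follows essentially the same route as the paper: the pointwise bound $\|\mathfrak{V}_S D_S((T_\lambda))(z)\|_{\mathcal{HS}} \leq \sum_\lambda \|\mathfrak{V}_S S(z-\lambda)\|_{\mathcal{HS}}\|T_\lambda\|_{\mathcal{HS}}$, domination via the step-function construction from \Cref{amalgconv} using $\mathfrak{V}_S S \in W(L^1_v)$, Young's inequality, and then density of finite sequences for $p,q<\infty$ with a duality pairing for the endpoint cases. (The only cosmetic differences are that the paper passes to a discrete convolution of sequences rather than a continuous one, and note that the argument $\lambda-z$ versus $z-\lambda$ is immaterial since $\|\mathfrak{V}_S S(\cdot)\|_{\mathcal{HS}}$ is even by \Cref{stftbasics}(i).)
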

\begin{proof}
We are required to show that $\mathfrak{V}_S D_S ((T_{\lambda})_{\lambda\in\Lambda}) \in L^{p,q}_m$. By definition;
\begin{align*}
    \|\mathfrak{V}_S D_S ((T_{\lambda})_{\lambda\in\Lambda})(z)\|_{\mathcal{HS}} &= \|\sum_{\lambda\in\Lambda} S^*\pi(z)^* \pi(\lambda)S T_{\lambda}\|_{\mathcal{HS}} \\
    &\leq \sum_{\lambda\in\Lambda} \|\mathfrak{V}_S S(z-\lambda)\|_{\mathcal{HS}}\|T_{\lambda}\|_{\mathcal{HS}}
\end{align*}
We have from \Cref{l1amalg} that $\mathfrak{V}_S S\in W(L^1_v)$,
so we denote once more
$G(z) = \sum_{\lambda \in \Lambda} S_{\lambda}\cdot\xi_{\Omega_{\lambda}} (z)$, where $S_{\lambda}$ 
is the value of $\mathfrak{V}_S S$ maximising the norm over $\lambda+[0,1]^{2d}$ as in \Cref{amalgconv}. We see then that the $L^{p,q}_m$ norm is bounded (up to a constant) by the discrete $l^{p,q}_{\Tilde{m}}$ norm of the convolution of sequences $s=(\|S_{\lambda}\|_{\mathcal{HS}})$ and $t=(\|T_{\lambda}\|_{\mathcal{HS}})$, and hence
\begin{align*}
    \|\mathfrak{V}_S D_S ((T_{\lambda})_{\lambda\in\Lambda})\|_{L^{p,q}_m} &\leq C' \|s*t\|_{l^{p,q}_{\Tilde{m}}} \\
    &\leq C'' \|s\|_{l^1_{\Tilde{v}}}\|t\|_{l^{p,q}_{\Tilde{m}}},
\end{align*}
and since $\|\mathfrak{V}_S S\|_{W(L^1_v)} = \|s\|_{l^1_{\Tilde{v}}}$, it follows that
\begin{align*}
    \|D_S\| \leq C \|\mathfrak{V}_S S\|_{W(L^1_v)}.
\end{align*}
Unconditional convergence for $p,q<\infty$ follows from the boundedness of $D_S$, since finite sequences are dense in $l^{p,q}_m$. For the case $p=\infty$ or $q=\infty$, the same fact can be used for the series $\langle R, \sum_{\lambda} \pi(\lambda)S T_{\lambda}\rangle_{\mathfrak{M}^1_{v},\mathfrak{M}^{\infty}_{1/v}}$, for all $R\in\mathfrak{M}^1_v$.
\end{proof}

\begin{corollary}
    Given $S, R \in\mathfrak{A}_v$, the frame operator $\mathfrak{O}_{S,R}:= D_S C_R$ is a bounded operator on $\mathfrak{M}^{p,q}_m$ for all $1\leq p,q\leq \infty$ and $v$-moderate weights $m$, with operator norm
    \begin{align*}
        \|\mathfrak{O}_S\| \leq C \|\mathfrak{V}_S S\|_{W(L^1_v)} \|\mathfrak{V}_R R\|_{W(L^1_v)}.
    \end{align*}
\end{corollary}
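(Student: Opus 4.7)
The corollary is essentially immediate from the composition of the two preceding propositions, so my proof plan is very short. The analysis operator $C_R$ was shown to map $\mathfrak{M}^{p,q}_m$ boundedly into the mixed-norm sequence space $l^{p,q}_{\Tilde{m}}(\Lambda,\mathcal{HS})$ with norm bounded by a constant times $\|\mathfrak{V}_R R\|_{W(L^1_v)}$, and the synthesis operator $D_S$ was shown to map $l^{p,q}_{\Tilde{m}}(\Lambda,\mathcal{HS})$ boundedly into $\mathfrak{M}^{p,q}_m$ with norm bounded by a constant times $\|\mathfrak{V}_S S\|_{W(L^1_v)}$. Hence the composition $\mathfrak{O}_{S,R} = D_S C_R$ is a bounded operator from $\mathfrak{M}^{p,q}_m$ into itself, and the operator norm satisfies
\begin{align*}
    \|\mathfrak{O}_{S,R}\| \leq \|D_S\|\,\|C_R\| \leq C \|\mathfrak{V}_S S\|_{W(L^1_v)} \|\mathfrak{V}_R R\|_{W(L^1_v)}
\end{align*}
by absorbing the two constants from the preceding propositions into a single constant $C$ depending on $v$, $m$, and the lattice $\Lambda$.

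There is essentially no obstacle here; one only needs to check that the hypotheses of the two propositions are met for both $S$ and $R$, which is exactly the assumption $S,R\in\mathfrak{A}_v$. I would include a brief remark that the convergence of the series defining $D_S C_R T$ is unconditional for $1\leq p,q<\infty$, and holds in the weak$^*$ sense (with respect to the duality $\mathfrak{M}^1_v,\mathfrak{M}^\infty_{1/v}$ established earlier) when $p=\infty$ or $q=\infty$, exactly as in the proof of boundedness of $D_S$. No further computation is required.
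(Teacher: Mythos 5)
Your proof is correct and is exactly the argument the paper intends: the corollary is stated without proof precisely because it follows by composing the boundedness of $C_R$ and $D_S$ from the two preceding propositions, with the operator norm bound obtained by multiplying the two individual bounds. Your added remark on the mode of convergence matches the convention already established in the synthesis-operator proposition.
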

As a final corollary, we see that Gabor g-frames for operators in $\mathfrak{A}_v$ generate equivalent norms on $\mathfrak{M}^{p,q}_m$. We note that while stated for general $S,R$, we can always consider the canonical dual frame $\{S^*\pi(\lambda)^*\mathfrak{O}^{-1}\}$ given a Gabor g-frame $S\in\mathfrak{A}_v$.
\begin{corollary}
    If $S,R\in\mathfrak{A}_v$ are dual Gabor g-frames, so $\mathfrak{O}_{S,R} = I_{\mathcal{HS}}$, then $\mathfrak{O}_{S,R} = \mathfrak{O}_{R,S} = I_{\mathfrak{M}^{p,q}_m}$ where the sum is unconditional for all $1\leq p,q < \infty$, and weak* otherwise. Furthermore, there are constants $A,B$ such that
    \begin{align*}
        A\|T\|_{\mathfrak{M}^{p,q}_m} \leq \|S^*\pi(\lambda)^* T\|_{l^{p,q}_m} \leq B \|T\|_{\mathfrak{M}^{p,q}_m}
    \end{align*}
    (and similarly for $R$).
\end{corollary}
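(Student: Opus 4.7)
The two preceding propositions already provide bounded analysis and synthesis operators $C_S, C_R \colon \mathfrak{M}^{p,q}_m \to \ell^{p,q}_{\tilde m}(\Lambda;\mathcal{HS})$ and $D_S, D_R \colon \ell^{p,q}_{\tilde m}(\Lambda;\mathcal{HS}) \to \mathfrak{M}^{p,q}_m$, with the synthesis series converging unconditionally for $p,q<\infty$ and weak-$*$ otherwise. Thus $\mathfrak{O}_{S,R} = D_S C_R$ and $\mathfrak{O}_{R,S} = D_R C_S$ are bounded on $\mathfrak{M}^{p,q}_m$; what remains is to upgrade the Hilbert--Schmidt identity $\mathfrak{O}_{S,R} = I_{\mathcal{HS}}$ to the coorbit level. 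A short computation of Hilbert--Schmidt adjoints yields $C_R^* = D_R$ and $D_S^* = C_S$, so $(\mathfrak{O}_{S,R})^* = \mathfrak{O}_{R,S}$ and the hypothesis automatically gives $\mathfrak{O}_{R,S} = I_{\mathcal{HS}}$ as well.

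\textbf{Extending the identity.} First I would prove $\mathfrak{O}_{S,R} T = T$ for $T \in \mathfrak{M}^1_v$. Since $C_R T \in \ell^1_{\tilde v}(\Lambda;\mathcal{HS})$ by the analysis bound with $(p,q,m)=(1,1,v)$, the partial sums $\sum_{|\lambda|\le N} \pi(\lambda) S R^* \pi(\lambda)^* T$ converge in $\mathfrak{M}^1_v$; via the continuous embedding $\mathfrak{M}^1_v \hookrightarrow \mathcal{HS}$ they also converge in $\mathcal{HS}$ to the same element, which by hypothesis equals $T$, so $\mathfrak{O}_{S,R} T = T$ in $\mathfrak{M}^1_v$. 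For general $T \in \mathfrak{M}^{p,q}_m$ with $p,q < \infty$ I would use density: the cutoffs
\begin{align*}
T_N := \mathfrak{V}_{S_0}^*\bigl(\chi_{[-N,N]^{2d}} \cdot \mathfrak{V}_{S_0} T\bigr)
\end{align*}
lie in $\mathfrak{M}^1_v$ because $\chi_{[-N,N]^{2d}} \cdot \mathfrak{V}_{S_0} T$ is compactly supported and locally bounded (using $\mathfrak{M}^{p,q}_m \hookrightarrow \mathfrak{M}^\infty_{1/v}$), hence in $L^1_v$, and then \Cref{youngsineq} gives $\mathfrak{V}_{S_0} T_N = \chi_{[-N,N]^{2d}}\mathfrak{V}_{S_0}T \natural \mathfrak{V}_{S_0}S_0 \in L^1_v$; dominated convergence shows $T_N \to T$ in $\mathfrak{M}^{p,q}_m$. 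Combining this with boundedness of $\mathfrak{O}_{S,R}$ and the identity on $\mathfrak{M}^1_v$ yields $\mathfrak{O}_{S,R} T = T$ in $\mathfrak{M}^{p,q}_m$. For the endpoint cases $p = \infty$ or $q = \infty$ I would invoke the duality proposition and test against $R' \in \mathfrak{M}^{p',q'}_{1/m}$:
\begin{align*}
\langle \mathfrak{O}_{S,R} T, R' \rangle = \langle T, \mathfrak{O}_{R,S} R' \rangle = \langle T, R' \rangle,
\end{align*}
where the first equality is the dual-pairing extension of $(\mathfrak{O}_{S,R})^* = \mathfrak{O}_{R,S}$ term-by-term (justified by unconditional convergence of the RHS from Step~3 applied to $R' \in \mathfrak{M}^{p',q'}_{1/m}$ with $p',q'<\infty$), and the second uses the already-established identity on $\mathfrak{M}^{p',q'}_{1/m}$. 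This yields $\mathfrak{O}_{S,R} T = T$ weak-$*$ and, by the same argument applied to each partial sum, weak-$*$ unconditional convergence of the defining series.

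\textbf{Norm equivalence.} The upper bound $\|C_S T\|_{\ell^{p,q}_{\tilde m}} \le B \|T\|_{\mathfrak{M}^{p,q}_m}$ with $B = C\|\mathfrak{V}_S S\|_{W(L^1_v)}$ is the analysis-operator bound proved earlier. For the lower bound, the identity $T = D_R C_S T$ together with the synthesis bound for $D_R$ gives
\begin{align*}
\|T\|_{\mathfrak{M}^{p,q}_m} = \|D_R C_S T\|_{\mathfrak{M}^{p,q}_m} \le \|D_R\| \cdot \|C_S T\|_{\ell^{p,q}_{\tilde m}},
\end{align*}
so $A := 1/\|D_R\|$ (controlled by $\|\mathfrak{V}_R R\|_{W(L^1_v)}$) works. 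The statement for $R$ in place of $S$ is obtained by swapping the two frames throughout.

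\textbf{Main obstacle.} The technically delicate point is the density step for $p,q < \infty$: one must verify that the cutoff-and-reconstruct procedure genuinely lands in $\mathfrak{M}^1_v$, which hinges on local boundedness of $\mathfrak{V}_{S_0}T$ via the ambient RKHS bound and the continuous embedding $\mathfrak{M}^{p,q}_m \hookrightarrow \mathfrak{M}^\infty_{1/v}$. The weak-$*$ case is more subtle than it appears, because it relies on unconditional convergence of $\mathfrak{O}_{R,S} R'$ in the predual coorbit space (where $1\le p', q' < \infty$) so that one may legitimately exchange sum and pairing; fortunately, this is precisely what Step~3 establishes.
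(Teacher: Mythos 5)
The paper states this corollary without proof, presenting it as an immediate consequence of the boundedness of $C_S$, $D_S$ and the frame operator, following the function-space template of chapter 12 of \cite{grochenigtfa}; your argument is exactly that standard route (adjoint relation $\mathfrak{O}_{S,R}^* = \mathfrak{O}_{R,S}$, extension of the identity from $\mathcal{HS}$ to $\mathfrak{M}^1_v$ and then by density and duality to all $\mathfrak{M}^{p,q}_m$, and the two-sided norm bound from $T = D_R C_S T$). The details you supply — in particular the cutoff construction showing density of $\mathfrak{M}^1_v$ and the weak-$*$ handling of the endpoint cases — are correct and fill in precisely what the paper leaves implicit.
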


\begin{remark}
\normalfont It would be nice to be able to decompose an operator $T\in\mathcal{HS}$ solely in terms of $\alpha_{\lambda}$ shifts of some window $S\in\mathcal{HS}$, that is, in the form $T=\sum_{\lambda} c_{\lambda} \alpha_{\lambda}(S)$. In general this is impossible, since there does not exist and operator $S\in\mathcal{HS}$ and lattice $\Lambda$ such that the linear span of $\{\alpha_{\lambda}(S)\}_{\lambda\in\Lambda}$ is dense in $\mathcal{HS}$ (Proposition 7.2, \cite{skrett20b}). This is roughly due to the fact that one must have control of both sides of the tensor product $L^2(\mathbb{R}^d)\otimes L^2(\mathbb{R}^d)$, such as in the case presented in \cite{balasz2008} wherein frames for $\mathcal{HS}$ are generated by two frames for $L^2(\mathbb{R}^d)$. Restricting to the set of positive $\mathcal{HS}$ operators, shifts of $S=\varphi_0 \otimes \varphi_0$ span a dense subset for any lattice for which $\varphi_0$ is a frame for $L^2(\mathbb{R}^d)$. However a positive $S$ may fail to be generate a dense subset for greater rank even when constructed from functions forming a multi-window Gabor frame, since for example even a rank-one $T=f\otimes f$ can only be reconstructed if the coefficients for all functions making up the multi-window Gabor frame in S are the coincide for $f$, ie $f=\sum_{\lambda} c_{\lambda} \sum_j \pi(\lambda)g_j$ where $S=\sum_j g_j \otimes g_j$. This inability to decompose $\mathcal{HS}$ operators solely as $\alpha_{\lambda}$ shifts of some window, shows the necessity of g-frames in our setting.
\end{remark}

\subsection{Modulation Space Characterisation by Localisation Operators}
In \cite{dor06} and \cite{dor11}, the authors consider the characterisation of modulation spaces by g-frames of translated localisation operators, initially for the Gelfand triple $(M^1(\mathbb{R}^d),L^2(\mathbb{R}^d),M^{\infty}(\mathbb{R}^d))$, and later for general $M^p_m(\mathbb{R}^d)$:
\begin{theorem}[Theorem 8, \cite{dor11}]
Let $\varphi\in M^1_v(\mathbb{R}^d)$ be non-zero and $h\in L^1_v(\mathbb{R}^{2d})$ be some non-negative symbol satisfying
\begin{align}\label{locopframecond}
    A \leq \sum_{\lambda\in\Lambda} h(z-\lambda) \leq B
\end{align}
for positive constants $A,B$, and almost all $z\in\mathbb{R}^{2d}$. Then for every $v$-moderate weight $m$ and $1\leq p < \infty$, the function $f\in M^{\infty}_{1/v}(\mathbb{R}^d)$ belongs to $M^p_m(\mathbb{R}^d)$ if and only if
\begin{align*}
    (\sum_{\lambda\in\Lambda} \|A_h^{\varphi} \pi(\lambda)^*f\|^p_{L^2}m(\lambda))^{1/p} < \infty,
\end{align*}
where $A_h^{\varphi}:f \mapsto V_{\varphi}^*(h\cdot V_{\varphi}f)$ is the localisation operator with symbol $h$. In this case the left hand side is an equivalent norm to $\|\cdot\|_{M^p_m}$. Similarly for $p=\infty$;
\begin{align*}
    \|f\|_{M^{\infty}_m} \asymp \sup_{\lambda\in\Lambda} \|A_h^{\varphi} \pi(\lambda)^*f\|_{L^2}m(\lambda)
\end{align*}
\end{theorem}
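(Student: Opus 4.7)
The plan is to reduce everything to estimates on the STFT side using two key identities. First, localisation operators satisfy the covariance $\pi(\lambda)A_h^{\varphi}\pi(\lambda)^* = A_{T_\lambda h}^{\varphi}$, so by unitarity of $\pi(\lambda)$
\[
    \|A_h^{\varphi}\pi(\lambda)^* f\|_{L^2} = \|A_{T_\lambda h}^{\varphi} f\|_{L^2}.
\]
Second, the STFT representation of a localisation operator is $V_\varphi(A_\sigma^{\varphi} f) = (\sigma\cdot V_\varphi f)\natural V_\varphi\varphi$ (taking $\|\varphi\|_{L^2}=1$ without loss of generality), which converts $L^2$ norms of $A_\sigma^{\varphi} f$ into twisted-convolution expressions. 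Together these allow the entire argument to be carried out at the STFT side, where Wiener-amalgam techniques and the scalar analogue of the twisted-convolution Young inequality \cref{youngsineq} apply directly.

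For the direction $f\in M^p_m\Rightarrow$ sequence in $l^p_m$: combining the two identities and applying Young's inequality bounds $\|A_h^{\varphi}\pi(\lambda)^* f\|_{L^2}$ by a localised weighted quantity of $|V_\varphi f|$ near $\lambda$ with weight $h(\cdot-\lambda)$. Raising to the $p$th power, weighting by $m(\lambda)^p$, interchanging sum and integral, and using the $v$-moderate bound $m(\lambda)\leq v(\lambda-z)m(z)$ together with $h\in L^1_v$ gives
\[
    \sum_\lambda m(\lambda)^p\|A_h^{\varphi}\pi(\lambda)^* f\|_{L^2}^p \leq C\|h\|_{L^1_v}^p \|V_\varphi f\|_{L^p_m}^p = C\|h\|_{L^1_v}^p\|f\|_{M^p_m}^p.
\]

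For the reverse direction, sum the covariance identity to get $\sum_\lambda A_{T_\lambda h}^{\varphi} = A_H^{\varphi}$ with $H(z)=\sum_\lambda h(z-\lambda)$ satisfying $A\leq H\leq B$ by \eqref{locopframecond}. Such a positive, essentially bounded-below localisation operator is invertible on every $M^p_m$ (an inverse-closedness / Sjöstrand-class argument), so
\[
    f = (A_H^{\varphi})^{-1}\sum_\lambda \pi(\lambda) A_h^{\varphi}\pi(\lambda)^* f.
\]
Controlling the synthesis-type sum in $M^p_m$ using $V_\varphi\varphi\in W(L^1_v)$ and Young for twisted convolution then yields $\|f\|_{M^p_m}\leq C'\,\|\{m(\lambda)\|A_h^{\varphi}\pi(\lambda)^* f\|_{L^2}\}_\lambda\|_{l^p}$. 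The $p=\infty$ case is identical after replacing $l^p$ sums by essential suprema.

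The hardest step will be the invertibility of $A_H^{\varphi}$ on the entire modulation-space scale for arbitrary $v$-moderate $m$: on $L^2$ the bounds $A\leq H\leq B$ immediately give it through the Berezin calculus, but extending to all $M^p_m$ requires placing $A_H^{\varphi}$ in a spectrally invariant subalgebra, typically by verifying that its Weyl (or anti-Wick) symbol lies in a Sjöstrand / Wiener amalgam class compatible with $v$. Every other step is a scalar specialisation of the coorbit machinery already set up in this paper, namely the correspondence principle and the Wiener amalgam bounds underlying the atomic decomposition argument of the previous section.
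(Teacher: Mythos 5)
First, a point of comparison: the paper offers no proof of this statement at all --- it is quoted verbatim as Theorem 8 of \cite{dor11} and used as a black box, so the only meaningful question is whether your sketch is a viable reconstruction of the known argument. At the level of architecture it is: the covariance $\pi(\lambda)A_h^{\varphi}\pi(\lambda)^*=A_{T_\lambda h}^{\varphi}$, the identity $\sum_\lambda A_{T_\lambda h}^{\varphi}=A_H^{\varphi}$ with $A\cdot I\le A_H^{\varphi}\le B\cdot I$ on $L^2$ from \eqref{locopframecond}, and spectral invariance (Wiener's lemma for the weighted Sjöstrand class) to invert $A_H^{\varphi}$ on the whole scale $M^p_m$ are exactly the ingredients of the cited proof, and you correctly flag the invertibility step as the deep one.

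There is, however, a genuine gap in the sufficiency direction as you have written it. The synthesis bound $\|\sum_\lambda\pi(\lambda)A_h^{\varphi}\pi(\lambda)^*f\|_{M^p_m}\le C\,\|(m(\lambda)\|A_h^{\varphi}\pi(\lambda)^*f\|_{L^2})_\lambda\|_{\ell^p}$ does not follow from ``$V_\varphi\varphi\in W(L^1_v)$ plus Young for twisted convolution.'' Writing $g_\lambda:=A_h^{\varphi}\pi(\lambda)^*f$, an $L^2$ bound on $g_\lambda$ yields only $\|V_\varphi g_\lambda\|_{L^\infty}\le\|g_\lambda\|_{L^2}\|\varphi\|_{L^2}$, with no phase-space decay, so the translates $V_\varphi(\pi(\lambda)g_\lambda)$ cannot be summed in $L^p_m$ this way. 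The envelope one actually has is $|V_\varphi(A_h^{\varphi}u)(z)|\le\|u\|_{L^2}\|\varphi\|_{L^2}\,\bigl(h*|(V_\varphi\varphi)^{\vee}|\bigr)(z)$, which is controlled by $\|u\|_{L^2}=\|f\|_{L^2}$ rather than by $\|g_\lambda\|_{L^2}$; and one cannot repair this by trading the $L^2$ norm for the $M^1_v$ norm of $g_\lambda$, since $\|A_h^{\varphi}u\|_{M^1_v}\lesssim\|A_h^{\varphi}u\|_{L^2}$ fails on the range of a compact positive operator (eigenfunctions with small eigenvalues give counterexamples). The proof in \cite{dor11} circumvents this by comparing $\|A_{T_\lambda h}^{\varphi}f\|_{L^2}$ two-sidedly with the local quantities $\bigl(\int h(w-\lambda)|V_\varphi f(w)|^2\,dw\bigr)^{1/2}=\langle A_{T_\lambda h}^{\varphi}f,f\rangle^{1/2}$, i.e.\ by reducing to a Wiener-amalgam characterisation of $M^p_m$ rather than by a direct synthesis estimate. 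Finally, both that envelope and the spectral-invariance step require $\varphi\in M^1_v$, not merely $\varphi\in L^2$ as stated --- a hypothesis the paper itself flags as needing correction --- so your sketch inherits this issue as well. The necessity direction is essentially fine modulo the usual weight bookkeeping (one needs $h\le B$ a.e., which \eqref{locopframecond} supplies, and a Jensen/H\"older step to handle $p\ne2$).
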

 We note in particular that condition \eqref{locopframecond} gives criteria for $A_h^{\varphi}$ to generate a Gabor g-frame, which one sees by considering $p=2$, $m\equiv 1$. We can characterise operator coorbit spaces similarly. We confirm that we can consider Gabor g-frames on $L^2(\mathbb{R}^d)$ in the operator setting:
\begin{proposition}
    $S$ generates a Gabor g-frame on $L^2(\mathbb{R}^d)$ if and only if $S$ generates a Gabor g-frame on $\mathcal{HS}$, with the same frame constants.
\end{proposition}
\begin{proof}
Assume $S$ generates a Gabor g-frame on $L^2(\mathbb{R}^d)$, that is
\begin{align}
    A\|f\|_{L^2}^2 \leq \sum_{\lambda\in\Lambda} \|S^*\pi(\lambda)^*f\|_{L^2}^2 \leq B\|f\|_{L^2}^2
\end{align}
for all $f\in L^2(\mathbb{R}^d)$. Then any $T\in\mathcal{HS}$ can be decomposed as $T=\sum_n f_n \otimes e_n$ for some orthonormal set $\{e_n\}_n$, and the trace taken with respect to $\{e_n\}_n$ (which can be extended to an orthonormal basis if T is not full rank):
\begin{align*}
    \sum_{\lambda\in\Lambda}\|S^*\pi(\lambda)^*T\|_{\mathcal{HS}}^2 &= \sum_{\lambda\in\Lambda} \sum_{n\in\mathbb{N}} \langle S^*\pi(\lambda)^*T e_n, S^*\pi(\lambda)^*T e_n\rangle_{L^2} \\
    &= \sum_{\lambda\in\Lambda} \sum_{n\in\mathbb{N}} \|S^*\pi(\lambda)^*f_n\|_{L^2}^2.
\end{align*}
The Gabor g-frame condition on $L^2(\mathbb{R}^d)$ then gives
\begin{align*}
    \sum_{n} A\|f_n\|_{L^2}^2 \leq \sum_{n\in\mathbb{N}} \sum_{\lambda\in\Lambda} \|S^*\pi(\lambda)^*f_n\|_{L^2}^2 \leq \sum_{n} B\|f_n\|_{L^2}^2,
\end{align*}
and so 
\begin{align}
    A\|T\|_{\mathcal{HS}}^2 \leq \sum_{\lambda\in\Lambda}\|S^*\pi(\lambda)^*T\|_{\mathcal{HS}}^2 \leq B\|T\|_{\mathcal{HS}}^2.
\end{align}
The opposite direction follows by the same expansion with a rank one operator.
\end{proof}
The following characterisation then uses Proposition 7.14 of \cite{skrett21}, which states that given $h\in L^1_{v^2}(\mathbb{R}^d)$, $A_h^{\varphi}\in M^1_v(\mathbb{R}^d)\otimes M^1_v(\mathbb{R}^d)$, which in particular tells us $A_h^{\varphi}\in\mathfrak{A}_v$.
\begin{corollary}
    Given $h\in L^1_{v^2}(\mathbb{R}^{2d})$ satisfying \eqref{locopframecond} and some $v$-moderate $m$, the operator $T\in\mathfrak{M}^{\infty}_{1/v}$ belongs to $\mathfrak{M}^{p,q}_m$ if and only if 
    \begin{align*}
        \big\{ A_{\overline{h}}^{\varphi} \pi(\lambda)^* T \big\}_{\lambda\in\Lambda} \in l^{p,q}_m(\Lambda;\mathcal{HS}).
    \end{align*}
\end{corollary}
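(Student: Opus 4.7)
The plan is to identify the stated characterisation as a direct instance of the Gabor g-frame equivalence of norms established in the previous subsection. The first task is to verify that the operator $A_h^{\varphi}$ meets the hypotheses of that corollary, namely that it is admissible and generates a Gabor g-frame on $\mathcal{HS}$. Admissibility follows from Proposition 7.14 of \cite{skrett21}: since $h\in L^1_{v^2}$, the localisation operator $A_h^{\varphi}$ lies in $M^1_v\hat{\otimes} M^1_v$, which was observed in the $\mathfrak{M}^1_v$ subsection to be contained in $\mathfrak{M}^1_v=\mathfrak{A}_v$. To obtain the g-frame property, I would apply Theorem 8 of \cite{dor11} at $p=2$ and $m\equiv 1$: the covering condition \eqref{locopframecond} guarantees that $A_h^{\varphi}$ generates a Gabor g-frame of $L^2(\mathbb{R}^d)$, and the preceding claim then lifts this to a Gabor g-frame of $\mathcal{HS}$ with the same frame bounds.

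The next step is to apply the corollary on dual Gabor g-frames in $\mathfrak{A}_v$. Let $R\in\mathfrak{A}_v$ denote the canonical dual Gabor g-frame of $A_h^{\varphi}$; that $R$ lies in $\mathfrak{A}_v$ is a Wiener-algebra type fact which follows from standard arguments, namely that the inverse of the g-frame operator preserves admissibility via the twisted convolution structure on $\mathfrak{A}_v$. Using that adjoints of localisation operators satisfy $(A_h^{\varphi})^*=A_{\overline{h}}^{\varphi}$, the corollary yields
\begin{align*}
    A\|T\|_{\mathfrak{M}^{p,q}_m} \leq \bigl\|\{A_{\overline{h}}^{\varphi}\pi(\lambda)^*T\}_{\lambda\in\Lambda}\bigr\|_{l^{p,q}_m(\Lambda;\mathcal{HS})} \leq B\|T\|_{\mathfrak{M}^{p,q}_m}
\end{align*}
for every $T\in\mathfrak{M}^{p,q}_m$, and the upper bound immediately gives the "only if" direction.

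For the converse, take $T\in\mathfrak{M}^{\infty}_{1/v}$ with $\{A_{\overline{h}}^{\varphi}\pi(\lambda)^*T\}\in l^{p,q}_m(\Lambda;\mathcal{HS})$. The same corollary provides the reconstruction identity $T=\mathfrak{O}_{R,A_h^{\varphi}}T=D_R C_{A_h^{\varphi}}T$ on $\mathfrak{M}^{\infty}_{1/v}$ in the weak$^*$ sense. Since $C_{A_h^{\varphi}}T\in l^{p,q}_m(\Lambda;\mathcal{HS})$ by assumption, and the synthesis proposition gives that $D_R$ maps $l^{p,q}_m(\Lambda;\mathcal{HS})$ boundedly into $\mathfrak{M}^{p,q}_m$, one concludes $T=D_R C_{A_h^{\varphi}}T\in\mathfrak{M}^{p,q}_m$. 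The main obstacle I foresee is verifying that the canonical dual $R$ remains in $\mathfrak{A}_v$, since the dual g-frame corollary explicitly requires this membership; this is where one must transfer a Wiener-algebra property from the scalar case and is presumably the reason for strengthening the hypothesis on $h$ from $L^1_v$ to $L^1_{v^2}$, so that the relevant g-frame operator is invertible within the admissible class.
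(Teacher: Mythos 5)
Your argument follows the same route the paper intends: admissibility of $A_h^{\varphi}$ via Proposition 7.14 of \cite{skrett21}, the g-frame property on $L^2$ from Theorem 8 of \cite{dor11} at $p=2$, $m\equiv 1$, the lift to $\mathcal{HS}$ via the preceding claim, and then the analysis/synthesis boundedness together with the dual-frame corollary. The one issue you flag --- that the canonical dual frame must itself lie in $\mathfrak{A}_v$ --- is a genuine point that the paper also leaves implicit (it only remarks that one ``can always consider the canonical dual frame''), so your identification of it as the remaining obstacle is accurate rather than a defect of your proof relative to the paper's.
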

In a similar manner to \Cref{tfintuition}, this corollary supports the intuition of the $\mathfrak{M}^{p,q}_m$ condition measuring the time-frequency decay in the operator sense. We often consider localisation operators with symbol $h$ having essential support concentrated in some domain $\Omega$, such as the characteristic function $\chi_{\Omega}$. Hence $A_h^{\varphi}$ can be seen as measuring the time frequency concentration of a function in $\Omega$. With this intuition we can consider $ A_{\overline{h}}^{\varphi}\pi(\lambda)^*T$ as measuring how much $T$ concentrates a function to some domain $\Omega + \lambda$, and thus we interpret the sum over $\lambda$ as a measure of the extent to which $T$ spreads out functions in the time-frequency plane. 

\section{Final Remarks}
This paper introduces an operator STFT, a novel concept bearing potential both in theoretical settings and applications to data analysis and quantum harmonic analysis. The main results of the paper arise from representing an ensemble of data points or signals with respect to a joint time-frequency representation, which captures correlations between data points in the time-frequency plane (\Cref{dataopex}). We show that the operator STFT has many of the familiar properties of the function STFT, and in particular that the spaces produced by the operator STFT with a fixed window are reproducing kernel Hilbert spaces. The Toeplitz operators associated with such spaces are the mixed-state localisation operators, an observation, that  supports the notion of the operator STFT extending the function STFT to an appropriate object for quantum harmonic analysis (\Cref{repkernstruc}). From a functional data analysis point of view, stable representations of continuous data is the ideal, and so having a reproducing structure when analysing data sets ensures stability with respect to noise and small perturbations in the incoming data. Furthermore, by extending the spaces of operators one considers, we are able to define coorbit spaces for operators. It turns out, that  we thus obtain Banach spaces of operators behaving analogously to the function coorbit spaces, with regards to duality, the equivalence of window in the definition, and even the correspondence principle for the coorbit spaces (\Cref{opcoorbit}). If we interpret function coorbit spaces as those functions appropriately concentrated in a region of the time-frequency plane, we can consider the operator coorbit spaces as those operators which \textit{act} on functions in a concentrated region of the time-frequency plane (\cref{tfintuition}). These coorbit spaces of operators turn out to have the remarkable property of decomposition via Gabor g-frames, which says that given an operator in a coorbit space, we can write the operator as a sum of translations of well localised operators (\Cref{atomdecomp}).





\pagebreak
\bibliography{refs}
\bibliographystyle{abbrv}

\Addresses

\end{document}